\chardef\bslash=`\\ 
\numberwithin{equation}{section}
\newtheorem{theorem}{Theorem}[section]
\newtheorem{corollary}[theorem]{Corollary}
\newtheorem{lemma}[theorem]{Lemma}
\newtheorem{proposition}[theorem]{Proposition}
\newtheorem{slemma}{Lemma}[subsection]        
\newtheorem{sproposition}[slemma]{Proposition} 
\newtheorem{stheorem}[slemma]{Theorem}
\newtheorem{scorollary}[slemma]{Corollary}     
\theoremstyle{remark}
\newtheorem{remark}[theorem]{Remark}
\newtheorem{sremark}[slemma]{Remark}          
\newtheorem{example}[theorem]{Example}
\newtheorem{question}[theorem]{Question}
\newtheorem{notation}[theorem]{Notation}
\theoremstyle{definition}
\newtheorem{definition}[theorem]{Definition}
\newcommand\bp{\begin{proof}}
\newcommand\ep{\end{proof}}
\newcommand{\id}{\operatorname{id}}
\newcommand{\Aut}{\operatorname{Aut}}
\newcommand{\res}{\operatorname{res}}
\newcommand{\Ad}{\operatorname{Ad}}
\newcommand{\GL}{\operatorname{GL}}
\newcommand{\DG}{{D_{P\subseteq G}}}
\newcommand{\JG}{{{\mathcal J}_{P\subseteq G}}}
\newcommand{\IG}{{{\mathcal I}_{P\subseteq G}}}
\newcommand{\DP}{{D_P}}
\newcommand{\JP}{{{\mathcal J}_P}}
\newcommand{\Om}{\Omega}
\renewcommand{\top}{\operatorname{top}}
\newcommand{\NN}{\mathbb N}
\newcommand{\ZZ}{\mathbb Z}
\newcommand{\C}{\mathbb C}
\newcommand{\CC}{\mathbb C}
\newcommand{\K}{\mathcal K}
\newcommand{\B}{\mathcal B}
\renewcommand{\span}{\operatorname{span}}
\def\lcm{\operatorname{lcm}}
\newcommand{\Hom}{\operatorname{Hom}}
\newcommand{\cA}{\mathfrak A}
\newcommand{\cB}{\mathcal B}
\newcommand{\E}{\mathcal E}
\newcommand{\cE}{\mathcal E}
\newcommand{\F}{\mathcal F}
\newcommand{\G}{\mathcal G}
\newcommand{\cH}{\mathcal H}
\newcommand{\cJ}{\mathcal J}
\newcommand{\cK}{\mathcal K}
\newcommand{\cL}{\mathcal L}
\newcommand{\cO}{\mathcal O}
\newcommand{\cR}{\mathcal R}
\newcommand{\cU}{\mathcal U}
\newcommand{\cV}{\mathcal V}
\newcommand{\cX}{\mathcal X}
\newcommand{\Spec}{\operatorname{Spec}}
\newcommand{\prolim}{\operatorname{prolim}}
\def\U{\mathcal U}
\newcommand{\blue}{\textcolor{blue}}
\newcommand{\bgl}{\begin{equation}}         
\newcommand{\egl}{\end{equation}}
\newcommand{\bgln}{\begin{eqnarray}}        
\newcommand{\egln}{\end{eqnarray}}
\newcommand{\bglnoz}{\begin{eqnarray*}}     
\newcommand{\eglnoz}{\end{eqnarray*}}
\newcommand{\btheo}{\begin{theorem}}
\newcommand{\etheo}{\end{theorem}}
\newcommand{\blemma}{\begin{lemma}}
\newcommand{\elemma}{\end{lemma}}
\newcommand{\bbew}{\begin{beweis}}
\newcommand{\ebew}{\end{beweis}}
\newcommand{\bremark}{\begin{remark}\em}
\newcommand{\eremark}{\end{remark}}
\newcommand{\bdefin}{\begin{definition}}
\newcommand{\edefin}{\end{definition}}
\newcommand{\bprop}{\begin{proposition}}
\newcommand{\eprop}{\end{proposition}}
\newcommand{\bcor}{\begin{corollary}}
\newcommand{\ecor}{\end{corollary}}
\newcommand{\mn}{\par\medskip\noindent}
\newcommand{\Ker}{{\rm Ker\,}}
\newcommand{\vp}{\varphi}
\def\Nz{\mathbb{N}}
\def\Zz{\mathbb{Z}}
\def\Fz{\mathbb{F}}
\def\Af{\mathfrak{A}[\varphi]}
\newcommand{\ti}{\tilde}
\begin{document}

\title[$K$-theory of crossed products by semigroup actions]{On the $K$-theory of crossed products by automorphic semigroup actions}
\author{Joachim Cuntz}
\author{Siegfried Echterhoff}
\author{Xin Li}
\address{Mathematisches Institut, Einsteinstr. 62, 48149
M\"unster, Germany}
\email{cuntz@uni-muenster.de}
\email {echters@uni-muenster.de}
\email{xinli.math@uni-muenster.de}

\begin{abstract}
Let $P$ be a semigroup that admits an embedding into a group
$G$. Assume that the embedding satisfies the Toeplitz condition of
\cite{Li-nuc} and that the Baum-Connes conjecture holds for $G$. We
prove a formula describing the $K$-theory of the reduced crossed
product $A\rtimes_{\alpha,r}P$ by any automorphic action of $P$.
This formula is obtained as a consequence of a result on the
$K$-theory of crossed products for special actions of $G$ on totally
disconnected spaces. We apply our result to various examples
including left Ore semigroups and quasi-lattice ordered semigroups.
We also use the results to show that for certain semigroups $P$,
including the $ax+b$-semigroup $R\rtimes R^\times$ for a Dedekind domain
$R$, the $K$-theory of the left and right regular semigroup
C*-algebras $C_\lambda^*(P)$ and $C_\rho^*(P)$ coincide, although
the structure of these algebras can be very different.
\end{abstract}

\thanks{2000 Mathematics Subject Classification. Primary 46L05, 46L80; Secondary 20Mxx, 11R04.}
\thanks{Research supported by the Deutsche Forschungsgemeinschaft (SFB 878) and by the ERC through AdG 267079.}

\maketitle

\section{Introduction}

A semigroup (or monoid) is a set with an associative multiplication.
Recently the authors of this article - in various combinations -
have become interested in the study of the C*-algebra
$C^*_\lambda(P)$ defined by the left regular representation of a
left cancellative semigroup $P$ on the Hilbert space $\ell^2(P)$.
This interest was motivated by the fact that specific semigroups
arising from number theory give examples with an intricate, yet
tractable, structure. While generalities about semigroup C*-algebras
had been studied before by various authors, only little was known
about more complicated examples and concerning questions such as
nuclearity, $K$-theory, ideal structure etc.

The C*-algebra $C^*_\lambda(P)$ contains a natural commutative
subalgebra $D$ generated by the range projections of products of
the isometries representing the elements of $P$ and their adjoints.
These range projections correspond to the ``constructible'' right
ideals in $P$, i.e. to those right ideals that can be constructed
from the principal ideals of the form $xP$ by finitely many
operations such as intersection etc.. The spectrum of $D$ is a
totally disconnected space which we denote by $\Om_P$. Each
constructible right ideal in $P$ corresponds to a compact open
subset in $\Om_P$.

In \cite{CEL} we studied the $K$-theory of $C^*_\lambda(P)$ assuming
that $P$ satisfies the left Ore condition. This condition provides a
systematic way to embed $P$ into an enveloping group $G$ and also
allows to dilate actions of $P$ to actions of $G$, \cite{Laca}. In
particular the natural action of $P$ on $\Om_P$ can be dilated to an
action of $G$ on a totally disconnected locally compact space
$\Om_{P\subseteq G}$. The C*-algebra $C^*_\lambda(P)$ is then Morita
equivalent to the {reduced} crossed product $C_0(\Om_{P\subseteq G})\rtimes_{{r}}G$.

In \cite{CEL} we had then computed the $K$-theory (in fact in a
bivariant setting) of this crossed product using a particular
feature (``independence'', see below) of $\Om_P$ together with the
following ``descent to compact subgroups'' principle taken from
\cite{ENO},\cite{CEO}.

\begin{itemize}
\item[(DC)] Assume that $G$ satisfies the Baum-Connes conjecture with
coefficients in the $G$-algebras $A$ and $B$. Let $x$ be a class in
$KK^G(A,B)$ which induces, via descent, isomorphisms $K_*(A\rtimes
H)\cong K_*(B\rtimes H)$ for all compact subgroups $H$ of $G$. Then
$x$ also induces an isomorphism $K_*(A\rtimes_rG)\cong
K_*(B\rtimes_rG)$.
\end{itemize}

\noindent Note that, by \cite{HK}, the Baum-Connes condition
required for $G$ in (DC) holds whenever $G$ is a-$T$-menable, and
hence in particular, if $G$ is amenable.

Using the independence of the set of constructible right ideals in
$P$ and principle (DC) we determined in \cite{CEL} the $K$-theory of
$C^*_\lambda (P)$ for some prominent semigroups from algebraic
number theory. This includes the multiplicative semigroup or the
$ax+b$-semigroup for the ring of algebraic integers in a number
field or the semigroup of its principal ideals. The answer involved
well known concepts from number theory such as the ideal class group
and the group of units.

In the present paper we take a new look at the results of \cite{CEL}
from a more general perspective. We start with a general study of
group actions on totally disconnected spaces $\Om$ under an
independence condition similar to the one mentioned above. Roughly
speaking, given a totally disconnected $G$-space $\Om$ we require
that one can find a $G$-invariant family $\cV$ of compact open
subsets of $\Om$ which generates the set of all compact open sets
via finite intersections, unions and difference sets, and which is
independent in the sense that no element $U$ of $\cV$ can be written
as a finite union of elements of $\cV$ different from $U$. Let
$I=\cV\setminus\{\emptyset\}$. We are then able to construct a
canonical element $x\in KK^G(C_0(I), C_0(\Om))$ which satisfies the
requirements of (DC).

We are also able to improve the arguments used in \cite{CEL} to
allow for general coefficients. We show that for any action
$\alpha:G\to \Aut(A)$  the class ${[\id_A]}\otimes_\CC x\in
KK^G(A\otimes C_0(I), A\otimes C_0(\Om))$ will also satisfy the
conditions in (DC). Assume, then, that $G$ satisfies the Baum-Connes
conjecture with coefficients in $A\otimes C_0(I)$ and $A\otimes
C_0(\Om)$ and denote by $\tau$ resp. $\mu$ the action of $G$ on
$\Om$ resp. $I$. Using the principle (DC), we obtain an isomorphism

\begin{equation}\label{1)}K_*\big((A\otimes C_0(\Om))\rtimes_{\alpha\otimes \tau,
r}G\big)\cong K_*\big((A\otimes
C_0(I))\rtimes_{\alpha\otimes\mu,r}G\big) \end{equation}

\noindent Moreover, by Green's imprimitivity theorem the right hand
side is in turn isomorphic to the sum, over the $G$-orbits in
$I$, of the $K$-theory of the crossed products by the stabilizer
groups, i.e. to
\begin{equation}\label{2)}\bigoplus_{[i]\in G\backslash I}
K_*(A\rtimes_{\alpha,r}G_i)\end{equation} where $G_i$ denotes the
stabilizer of $i\in I$.

These results have an independent interest. Most important for us
however is again the application to the $K$-theory of semigroup
C*-algebras and semigroup crossed products. We study semigroup
crossed products $A\rtimes_{\alpha,r}P$ in which the semigroup $P$
acts {\em by automorphisms} on the C*-algebra $A$ in section
\ref{sec-semigroup}.

In \cite{Li-nuc} it was shown by the third author that, given
independence of the set of constructible right ideals, for our
purposes, the left Ore condition for $P$ can be weakened. It
suffices to assume that the semigroup $P$ is embedded into a group
$G$ and that the inclusion $P\subseteq G$ satisfies the Toeplitz
condition introduced in \cite{Li-nuc}. Under this weaker condition
too, the full and reduced C*-algebras of $P$ embed as full corners
into full and reduced crossed products by the group $G$. As we will
see, there are natural examples of semigroups satisfying the
Toeplitz condition but not the {left} Ore condition. Because of the
embedding as a full corner, again the computation of the K-theory of
a crossed product by $P$ can be reduced to the computation of the
$K$-theory of a crossed product by $G$. This crossed product by $G$
is of the form $(A\otimes C_0(\Om))\rtimes_{\alpha\otimes \tau, r}G$
considered above, and we can therefore apply formulas (\ref{1)}) and
(\ref{2)}).

We are now in a position to apply our results to explicit classes of
semigroups. Consider first a semigroup $P$ which is given as the
positive cone in a quasi-lattice ordered group $G$ which satisfies the
Baum-Connes conjecture with coefficients. The inclusion
$P \subseteq G$ satisfies the Toeplitz condition. For the crossed
product of a C*-algebra $A$ by an action $\alpha$ of $P$ by
automorphisms, we obtain the striking result
$$K_*(A)\cong K_*(A\rtimes_{\alpha,r}P)$$ i.e. the $K$-theory of the
crossed product does not depend on $P$ nor on $\alpha$. This is a
far reaching generalization of the well known corresponding result
for the action of the Toeplitz algebra by an automorphism on $A$
which in fact was the basis for the proof by Pimsner-Voiculescu of
the six term exact sequence for a crossed product by $\mathbb Z$,
\cite{PV}.

Another important example is the following. Let $R$ be the ring of
algebraic integers in a number field (or a more general Dedekind
domain). Denote by $R^\times$ its multiplicative semigroup and by
$S=R\rtimes R^\times$ its $ax+b$-semigroup. The $K$-theory of
$C^*_\lambda(S)$ was determined in \cite{CEL}. Consider now the
opposite semigroup $S^{op}$. Its left regular C*-algebra
$C^*_\lambda(S^{op})$ is the right regular C*-algebra $C^*_\rho (S)$
of $S$. We mention that $C^*_\lambda (S)$ and $C^*_\rho (S)$ are
very different algebras. For instance, the second algebra admits
non-trivial one-dimensional representations
while the first one
admits only infinite-dimensional representations. Also $S$ satisfies
the left Ore condition while $S^{op}$ does not. However, $S^{op}$
satisfies independence and the Toeplitz condition. We can therefore
again compute the $K$-theory. Somehow surprisingly, it turns out
that $C^*_\lambda (S)$ and $C^*_\rho (S)$ have the same $K$-theory, indeed they are
$KK$-equivalent.
We also determine the $K$-theory of $C^*_\lambda (S)$ and $C^*_\rho
(S)$ for a semidirect product of the form $S=H\rtimes \Nz$ where $H$
is a group. Again these two C*-algebras are completely different but
still have the same $K$-theory.

The paper is organized as follows: After a brief discussion of
totally disconnected spaces in  \S \ref{sec-prel} we present in  \S
\ref{sec-K-theory} our main results on the $K$-theory of crossed
products $(A\otimes C_0(\Om))\rtimes_rG$.  In  \S
\ref{sec-semigroup} we deduce our results on the $K$-theory of
crossed products $A\rtimes_r P$ by automorphic actions
 of semigroups and we briefly discuss the
 consequences for crossed products by the left Ore semigroups
 studied in \cite{CEL}.
 Crossed products by quasi-lattice semigroups $P\subseteq G$ are
 studied in \S \ref{sec-quasi-lattices}. Indeed, the beautiful  $K$-theory
 formula
 for such crossed products follows from the fact that for quasi-lattice
 ordered semigroups $P\subseteq G$
 the action of $G$ on the set of nonempty constructible left $P$-ideals in
 $G$ is
transitive. We present further examples which show that transitivity
of this action is not restricted to this case, and therefore
similar $K$-theory formulas can be obtained in more
generality. Our results on the left and right regular semigroup
C*-algebras $C_\lambda^*(P)$ and $C_\rho^*(P)$ are presented in \S
\ref{sec-left-right}. Finally, in the appendix we discuss some basic
constructions in equivariant $KK$-theory of finite dimensional
algebras acted upon by compact groups which we need  for checking
the principle (DC) in \S \ref{sec-K-theory}. These $KK$-results
might be known to experts, but seem not to be present in the
literature.
\medskip

{\bf Acknowledgements:} We are grateful to Marcelo
Laca for drawing our attention to \cite{Ivanov} and to Mikael
R{\o}rdam for pointing out Example \ref{ex-rordam}.

\section{Preliminaries on totally disconnected spaces}\label{sec-prel}
Recall that a locally compact Hausdorff space $\Omega$ is {\em
totally disconnected} if and only if its topology has a basis of
compact open subsets. The corresponding algebras $C_0(\Omega)$ of
continuous functions which vanish at infinity are precisely the
commutative AF-Algebras.
 In what follows, if $V\subseteq\Omega$, then $1_V:\Omega\to \CC$ denotes
 the characteristic function of $V$.

\begin{definition}\label{def-generate}
Let $\Omega$ be a totally disconnected locally compact Hausdorff space and let  $\mathcal V$ be a family of compact open subsets in $\Omega$.  Moreover, let $\cU_c(\Omega)$ denote the set of all compact open subsets of $\Omega$.
Then we say that {\em $\cV$ is a generating family of the compact open sets of $\Om$}
if $\U_c(\Omega)$ coincides with  the smallest family $\cU$ of compact open sets in $\Omega$ which contains $\mathcal V$ and which is closed under finite intersections, finite unions, and under taking differences $U\smallsetminus W$ with $U,W\in \cU$.
\end{definition}
%

\begin{lemma}\label{lem-generators}
Suppose that $\mathcal V$ is a family of compact open sets in the totally disconnected space $\Omega$. Then the following are equivalent
\begin{enumerate}
\item The set $\{1_V: V\in \mathcal V\}$ generates $C_0(\Om)$ as a C*-algebra.
\item The set $\cV$ generates $\cU_c(\Omega)$ in the sense of Definition \ref{def-generate}.
\end{enumerate}
Moreover, if $\cV$ is closed under taking finite intersections, then
(1) and (2) are equivalent to
{\begin{enumerate}
  \item[(3)]
 $\span\{1_V: V\in \cV\}$ is a dense
subalgebra of $C_0(\Om)$ containing $\span\{1_U: U\in
\cU_c(\Om)\}$.\end{enumerate}}
\end{lemma}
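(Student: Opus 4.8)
The plan is to prove the three equivalences by a cycle, exploiting the AF/commutative structure of $C_0(\Om)$ together with the fact that characteristic functions of compact open sets form a natural linearly spanning set.

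\medskip

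\emph{Setup and the implication $(2)\Rightarrow(1)$.} First I would record the elementary algebraic identities on characteristic functions: for compact open $U,W$ one has $1_{U\cap W}=1_U\cdot 1_W$, $1_{U\cup W}=1_U+1_W-1_U 1_W$, and $1_{U\smallsetminus W}=1_U-1_U 1_W$. Hence the linear span of $\{1_U:U\in\cU\}$ for any family $\cU$ closed under the three operations of Definition \ref{def-generate} is already an algebra. Now assume (2). Let $B$ be the C*-subalgebra of $C_0(\Om)$ generated by $\{1_V:V\in\cV\}$. Since each $1_V$ is a projection, $B$ contains $1_U$ for every $U$ in the family obtained from $\cV$ by closing under intersection, union and difference, by the identities above; by (2) this family is all of $\cU_c(\Om)$, so $1_U\in B$ for every compact open $U$. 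But $\span\{1_U:U\in\cU_c(\Om)\}$ is dense in $C_0(\Om)$ (any function in $C_0(\Om)$ is uniformly approximated by locally constant functions of compact support, which are exactly finite linear combinations of such $1_U$ because $\Om$ is totally disconnected and locally compact). Therefore $B=C_0(\Om)$, which is (1).

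\medskip

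\emph{The implication $(1)\Rightarrow(2)$.} Conversely assume (1). Let $\cU_0$ be the smallest family of compact open sets containing $\cV$ and closed under finite intersection, union and difference; we must show $\cU_0=\cU_c(\Om)$. Set $A_0=\span\{1_U:U\in\cU_0\}$; by the identities above this is a $*$-subalgebra of $C_0(\Om)$, and it contains all the generators $1_V$, so by (1) its closure is $C_0(\Om)$. Now the point is to transfer the approximation back to the level of sets. Given any compact open $U\subseteq\Om$, choose $f\in A_0$ with $\|1_U-f\|<1/3$. Since $f\in A_0$ takes only finitely many values and is locally constant, the set $E=\{x:\operatorname{Re} f(x)>1/2\}$ is compact open and is a finite Boolean combination of sets from $\cU_0$, hence $E\in\cU_0$; and $\|1_U-f\|<1/3$ forces $E=U$. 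Thus $U\in\cU_0$, proving (2). The slightly delicate point here, which I expect to be the main obstacle, is making precise that $E$ is built from members of $\cU_0$ by finitely many of the allowed operations: one writes $f=\sum_{j}c_j 1_{U_j}$ with $U_j\in\cU_0$ and argues that the super-level set of such a finite step function lies in the Boolean algebra generated by the $U_j$ inside $\cU_c(\Om)$, which is contained in $\cU_0$ since $\cU_0$ is closed under the Boolean operations (intersection, union, difference — and complement within any member is a difference).

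\medskip

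\emph{The extra equivalence when $\cV$ is closed under finite intersections.} Assume now in addition that $\cV$ is closed under finite intersections, and show $(1),(2)\Leftrightarrow(3)$. For $(3)\Rightarrow(1)$ this is immediate since a dense subalgebra generates the C*-algebra. For $(2)\Rightarrow(3)$: by the argument in $(1)\Rightarrow(2)$ reversed, $\span\{1_V:V\in\cV\}$ is already a subalgebra — here we use that $\cV$ is closed under intersection so that products $1_V 1_{V'}=1_{V\cap V'}$ stay in the span; and it is dense by $(2)\Rightarrow(1)$. It remains to see it contains $\span\{1_U:U\in\cU_c(\Om)\}$, i.e. that every $1_U$ with $U$ compact open is a \emph{finite linear combination} of the $1_V$. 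This is the genuinely new content: one shows by induction on the number of operations needed to build $U$ from $\cV$ that $1_U\in\span\{1_V:V\in\cV\}$, using at each step the identities $1_{U\cap W}=1_U 1_W$, $1_{U\cup W}=1_U+1_W-1_U 1_W$, $1_{U\smallsetminus W}=1_U-1_U 1_W$ together with the fact that a product of two finite linear combinations of $\{1_V\}$ is again such a combination precisely because $\cV$ is intersection-closed. Hence the span is exactly the $*$-algebra $\span\{1_U:U\in\cU_c(\Om)\}$, which is dense, giving (3). This closes the cycle and completes the proof.
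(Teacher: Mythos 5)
Your proposal is correct and follows essentially the same route as the paper: reduce to the intersection-closed case so the generated algebra is just the linear span, use the Boolean identities for $1_{U\cup W}$ and $1_{U\smallsetminus W}$, and for $(1)\Rightarrow(2)$ transfer a uniform approximation of $1_U$ by a step function back to the level of sets. The only cosmetic difference is that you recover $U$ as a super-level set $\{\operatorname{Re}f>1/2\}$ of the approximant, whereas the paper first rewrites the approximant with pairwise disjoint supports and nonzero coefficients and observes that each piece is either contained in or disjoint from $U$; both rest on the same atomization of the Boolean algebra generated by finitely many sets in $\cU_0$.
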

\begin{proof} Let $\mathcal U$ be the smallest family of compact open sets in $\Om$ 
which contains $\mathcal V$ and is closed under finite intersections, finite unions, and 
taking differences. Since a finite product of characteristic functions is
the characteristic function of the finite intersection of the given
sets, we may assume without loss of generality that $\cV$ is closed
under finite intersections. In that case it is easy to see that the
algebra generated by $\{1_V: V\in \cV\}$  coincides with
${\span}\{1_V: V\in \cV\}$. Since $1_{V_1\cup
V_2}=1_{V_1}+1_{V_2}-1_{V_1\cap V_2}$ and $1_{V_1\setminus
V_2}=1_{V_1}-1_{V_1\cap V_2}$ we see that this span contains all
characteristic functions $1_U$ with $U\in \cU$. Thus we may replace $\cV$ by $\cU$.
Note that every function in  $\span\{1_U: U\in \cU\}$ can be written
as a linear combination $\sum_{i=1}^k\lambda_i 1_{U_i}$ in which all
$\lambda_i$ are non-zero and in which the $U_i$ are pairwise
disjoint.

\noindent {Suppose} now that (1) holds. Then for every compact
open set $W$ in $\Omega$ we find  a {linear} combination
$\sum_{i=1}^k\lambda_i 1_{U_i}$ with pairwise disjoint $U_1,\ldots,
U_k$ in $\cU$ and $\lambda_i\neq 0$ such that $\|1_W-\sum_{i=1}^k
\lambda_i 1_{U_i}\|_{\infty}<\frac{1}{2}$. This implies that
{each} set $U_i$ is either a subset of $W$ or $U_i\cap
W=\emptyset$. In any case, it follows that $W$ is the union of those
$U_i$'s which are contained in $W$. Conversely, if
$\cU=\cU_c(\Omega)$, then every continuous function with compact
support can be approximated by locally constant functions with
compact supports, which are finite linear combinations of elements in
$\{1_U: U\in \cU\}$.
\end{proof}

\begin{lemma}\label{lem-projections}
Suppose that $D$ is a commutative C*-algebra such that $D$ is generated as a C*-algebra by a set of projections $\{e_i: i\in I\}\subseteq D$. Then the Gelfand spectrum $\Omega=\Spec(D)$ of $D$ is totally disconnected and the family of sets
$\cV=\{\widehat{e_i}^{-1}(\{1\}):i\in I\}$ is a family of compact open sets in $\Omega$ which generates $\cU_c(\Omega)$.
Here, for an element $d\in D$, $\widehat{d}\in C_0(\Omega)$ denotes the Gelfand-transform of $d$.
\end{lemma}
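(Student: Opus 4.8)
The plan is to transport the entire statement through the Gelfand transform $D\cong C_0(\Omega)$ and then invoke Lemma~\ref{lem-generators}. First I would check that each $\widehat{e_i}$ is the characteristic function of a compact open set: $\widehat{e_i}$ is a self-adjoint idempotent in $C_0(\Omega)$, hence takes only the values $0$ and $1$, so the set $V_i:=\widehat{e_i}^{-1}(\{1\})=\widehat{e_i}^{-1}\big((\tfrac12,\infty)\big)$ is open by continuity and compact because $\widehat{e_i}$ vanishes at infinity; thus $\widehat{e_i}=1_{V_i}$ with $V_i\in\cU_c(\Omega)$. Since $\{e_i:i\in I\}$ generates $D$ as a C*-algebra, $\{1_{V_i}:i\in I\}$ generates $C_0(\Omega)$, so condition~(1) of Lemma~\ref{lem-generators} is met; by that lemma condition~(2) holds as well, which is exactly the assertion that $\cV=\{V_i:i\in I\}$ generates $\cU_c(\Omega)$ in the sense of Definition~\ref{def-generate}.

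It then remains to show that $\Omega$ is totally disconnected. The quickest route is to note that for each finite $F\subseteq I$ the C*-subalgebra of $D$ generated by $\{e_i:i\in F\}$ is finite dimensional --- being commutative and generated by finitely many commuting projections, it equals the linear span of the (finitely many) nonzero products $\prod_{i\in F'}e_i$ with $\emptyset\neq F'\subseteq F$ --- so $D$ is the inductive limit of these finite-dimensional subalgebras, i.e.\ a commutative AF-algebra, and by the remarks at the start of Section~\ref{sec-prel} its spectrum is totally disconnected. Alternatively one can argue directly: given $\omega\in\Omega$ and an open neighbourhood $U$ of $\omega$, pick $f\in C_0(\Omega)$ with $0\le f\le1$, $f(\omega)=1$ and $\supp f\subseteq U$ compact, and approximate $f$ to within $\tfrac13$ in the sup-norm by an element $d$ of the dense $*$-subalgebra spanned by $\{1_{V_i}\}$; writing $d=\sum_j\lambda_j1_{W_j}$ with the $W_j$ pairwise disjoint compact open sets and every $\lambda_j\neq0$ (as in the proof of Lemma~\ref{lem-generators}), the unique $W_{j_0}$ with $\omega\in W_{j_0}$ must be contained in $U$, since a point of $W_{j_0}$ outside $\supp f$ would force $|\lambda_{j_0}|<\tfrac13$ while $\lambda_{j_0}=d(\omega)>\tfrac23$. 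Hence the compact open sets form a basis for the topology of $\Omega$.

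The only step that calls for a little care --- though I would not call it a genuine obstacle --- is the non-unital bookkeeping: one has to use that a projection in $C_0(\Omega)$ necessarily has \emph{compact} support, which is precisely what makes each $V_i$ compact and each of the subalgebras above finite dimensional. With that in hand the lemma reduces cleanly to Lemma~\ref{lem-generators}.
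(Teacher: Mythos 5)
Your proof is correct and follows essentially the same route as the paper's: total disconnectedness is obtained by observing that $D$ is the inductive limit of the finite-dimensional subalgebras generated by finite subsets of $\{e_i:i\in I\}$, hence a commutative AF-algebra, and the generating statement is reduced to Lemma~\ref{lem-generators} via the identification of projections in $D$ with characteristic functions $1_V$ of compact open sets $V=\widehat{e}^{-1}(\{1\})$. You simply supply more detail than the paper does (the compactness of the $V_i$ in the non-unital case, and an alternative direct argument for total disconnectedness), which is fine.
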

\begin{proof}
For each finite $F\subseteq I$ let $D_F\subseteq D$ denote the
C*-algebra generated by $\{e_i:i\in F\}$. Then $D_F$ is finite
dimensional  and $D=\lim_{F} D_F$. Thus, $D$ is a commutative
AF-algebra {and therefore} $\Omega=\Spec(D)$ is totally
disconnected. The second assertion then follows from Lemma
\ref{lem-generators} and the fact that projections  $e\in D$
correspond to characteristic functions $1_V\in C_0(\Omega)$ under
the Gelfand transform for $V=\widehat{e}^{-1}(\{1\})$.
\end{proof}

The above lemmas show that it is equivalent to study sets of projections $\{e_i: i\in I\}$ generating a commutative C*-algebra $D$
 or sets of compact open subsets of totally disconnected spaces $\Omega$ which generate the compact open sets $\cU_c(\Omega)$
in the sense of Definition \ref{def-generate}. For our $K$-theoretic studies we need generating sets which satisfy a certain independence condition. The following definition is taken from \cite{Li-am} and plays an important r\^ole in \cite{CEL} and
\cite{Li-nuc}.

\begin{definition}\label{def-independentset} Let $ \cJ$ be a subset of the
power set  $\mathcal P(Y)$ of a set $Y$. We call $\cJ$ {\em
independent}, if for every finite {family} $X, X_1,\ldots, X_k$
of elements in $\cJ$ such that $X={\bigcup}_{i=1}^kX_i$, there
must be an index $i\in \{1,\ldots, k\}$ such that $X_i=X$.
\end{definition}

Making the connection between sets and projections, it makes sense to extend the notion of independence to
projections in arbitrary commutative C*-algebras. We need

\begin{lemma}\label{lem-max}
Suppose that $\{e_i:i\in I\}$ is a set of projections in the commutative C*-algebra $D$. Then for each finite subset $F\subseteq I$ there exists a smallest projection $e\in D$ such that $e_i\leq e$ for all $i\in F$. We then write
$e=:\bigvee_{i\in F}e_i$.
\end{lemma}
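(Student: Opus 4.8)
The plan is to exploit the fact that $D$ is commutative, so that we may work entirely inside the finite-dimensional C*-subalgebra generated by the finitely many projections in question. First I would fix a finite subset $F\subseteq I$ and let $D_F\subseteq D$ be the C*-subalgebra generated by $\{e_i:i\in F\}$. Since $D$ is commutative and $F$ is finite, $D_F$ is a finite-dimensional commutative C*-algebra, hence isomorphic to $\CC^n$ for some $n$; equivalently, its spectrum is a finite set and $D_F\cong C(\Spec D_F)$. In particular $D_F$ is unital, with unit $1_{D_F}$ the supremum of all its minimal projections, and the projections of $D_F$ form a finite Boolean lattice under the order inherited from $D$ (with joins given by $p\vee q = p+q-pq$, which makes sense since $pq=qp$).

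Next I would simply set $e:=\bigvee_{i\in F}e_i$ computed inside this finite Boolean lattice of projections of $D_F$ — concretely, writing each $e_i$ as a sum of minimal projections of $D_F$, the element $e$ is the sum of all minimal projections of $D_F$ that appear in at least one of the $e_i$. By construction $e$ is a projection in $D_F\subseteq D$ and $e_i\le e$ for all $i\in F$. It remains to check minimality \emph{in $D$}: if $f\in D$ is any projection with $e_i\le f$ for all $i\in F$, then I must show $e\le f$. Here I would use that $e_i\le f$ means $e_i f = e_i$; since $e$ is a (noncommutative-free, because $D$ is abelian) polynomial in the $e_i$ with the property $e_i e = e_i$, multiplying the defining expression $e = \sum (\text{products of } e_i\text{'s})$ on the right by $f$ and using $e_i f = e_i$ for each factor gives $ef = e$, i.e. $e\le f$. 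So $e$ is the smallest projection in $D$ dominating all $e_i$, $i\in F$, which is the assertion; uniqueness of a smallest element is automatic.

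The only point requiring a little care — and the main (modest) obstacle — is the passage from ``smallest in $D_F$'' to ``smallest in $D$'': a priori there could be projections in $D\setminus D_F$ lying below $e$ but still above every $e_i$. The computation $ef=e$ above rules this out, but it relies on commutativity of $D$ in an essential way (so that $e$ can be expressed as a genuine polynomial in the $e_i$ all of whose monomials are absorbed by right multiplication by any $f\ge e_i$); alternatively, one can argue on spectra: $\Spec D_F$ is a quotient of (a clopen piece of) $\Spec D$, the sets $\widehat{e_i}^{-1}(\{1\})$ are compact open in $\Spec D$, and $e$ corresponds to the closure of their union, which is automatically the smallest compact open set containing all of them because $\Spec D$ is totally disconnected. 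Either route finishes the proof with no further difficulty.
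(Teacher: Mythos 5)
Your proof is correct and is essentially the paper's argument: the paper simply writes the desired element explicitly as the inclusion--exclusion sum $\bigvee_{i\in F}e_i=\sum_{\emptyset\neq H\subseteq F}(-1)^{|H|-1}\prod_{i\in H}e_i$, which coincides with your $e$ (indeed $e=1_{D_F}$), and your observation that every monomial in the $e_i$'s absorbs on the right any projection $f$ dominating all the $e_i$ is precisely the verification the paper leaves to the reader. One caveat only about your alternative spectral route: $D$ is not assumed to be generated by projections, so $\Spec(D)$ need not be totally disconnected; that detour is unnecessary anyway, since the finite union of the compact open sets $\widehat{e_i}^{-1}(\{1\})$ is already compact open without any such hypothesis.
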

\begin{proof} One checks that $\bigvee_{i\in F}e_i=\sum_{\emptyset\neq H\subseteq F}(-1)^{|H|-1}\prod_{i\in H}e_i$.
\end{proof}

\begin{definition}\label{def-independentproj}
Suppose that $\{e_i:i\in I\}$ is a set of projections in the commutative C*-algebra $D$.
We say that $\{e_i:i\in I\}$ is {\em independent} if for all finite  sets $F\subseteq I$ and $i_0\in I$ such that
$\bigvee_{i\in F}e_i=e_{i_0}$ it follows that $i_0\in F$.
\end{definition}

\begin{remark}\label{rem-independent}
Let $D$ be a commutative C*-algebra generated by the set of projections $\{e_i:i\in I\}$. Let $\Omega=\Spec(D)$ denote the Gelfand dual of $D$ and let $V_i:=\widehat{e_i}^{-1}(\{1\})$ for all $i\in I$. Then it is straightforward to check
that $\{e_i:i\in I\}$ is independent in the sense of Definition \ref{def-independentproj} if and only if
$\cV=\{V_i: i\in I\}$ is independent in the sense of Definition \ref{def-independentset}.
Conversely, if we start with a family $\cV$ of compact open sets in a totally disconnected space $\Omega$, then
$\cV$ is independent if and only if the set $\{1_V: V\in \cV\}$ is an independent set of projections.
\end{remark}

{The following lemma is obvious, but also follows from \cite[Proposition 2.4]{Li-nuc}:}

{\begin{lemma}\label{lem-lin-independent}
Suppose that $\{e_i:i\in I\}$ is a 
 family of projections in the commutative C*-algebra $D$ which is closed under 
multiplication up to $0$. Then $\{e_i:i\in I\}$ is independent in the sense of Definition \ref{def-independentproj}
if and only it is linearly independent.
\end{lemma}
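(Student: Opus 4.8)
The plan is to prove Lemma \ref{lem-lin-independent} by proving the two implications separately, using the assumption that $\{e_i : i \in I\}$ is closed under multiplication up to $0$, i.e.\ for all $i,j\in I$ either $e_ie_j = 0$ or $e_ie_j = e_k$ for some $k\in I$.

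\medskip

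First I would establish that linear independence implies independence, which is the easy direction. Suppose $\{e_i : i\in I\}$ is linearly independent, and suppose towards a contradiction that it fails to be independent: there is a finite $F\subseteq I$ and $i_0\notin F$ with $\bigvee_{i\in F}e_i = e_{i_0}$. By Lemma \ref{lem-max} we have $\bigvee_{i\in F}e_i = \sum_{\emptyset\neq H\subseteq F}(-1)^{|H|-1}\prod_{i\in H}e_i$. Each product $\prod_{i\in H}e_i$ is, by the closure-under-multiplication-up-to-$0$ hypothesis, either $0$ or equal to some $e_{j(H)}$ with $j(H)\in I$. Substituting and collecting terms, the identity $\bigvee_{i\in F}e_i = e_{i_0}$ becomes a linear relation of the form $e_{i_0} = \sum_k c_k e_{j_k}$ with indices $j_k\in I$ coming from the nonzero products. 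The coefficient of $e_{i_0}$ on the right is $0$ (since $i_0\notin F$, the singleton $\{i_0\}$ never appears as an $H$, and one must check no larger $H\subseteq F$ can have $\prod_{i\in H}e_i = e_{i_0}$; but if $\prod_{i\in H}e_i = e_{i_0}$ then $e_{i_0}\le e_i$ for each $i\in H$, so $\bigvee_{i\in F}e_i \ge e_i \ge e_{i_0}$ forces $e_i = e_{i_0}$, and then $i = i_0\in F$, contradiction — so this cannot happen either). Hence we get a nontrivial linear relation among the $e_i$, contradicting linear independence.

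\medskip

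For the converse — independence implies linear independence — I would argue via the Gelfand picture. Let $\Omega = \Spec(D')$ where $D'$ is the (commutative) C*-subalgebra generated by $\{e_i : i\in I\}$, and put $V_i = \widehat{e_i}^{-1}(\{1\})$. By Remark \ref{rem-independent}, independence of $\{e_i\}$ is equivalent to independence of $\cV = \{V_i : i\in I\}$ in the sense of Definition \ref{def-independentset}, and linear independence of $\{e_i\}$ is equivalent to linear independence of $\{1_{V_i}\}$ in $C_0(\Omega)$. Moreover the closure-under-multiplication-up-to-$0$ hypothesis says $\cV$ is closed under finite intersections (up to $\emptyset$). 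So it suffices to show: a family $\cV$ of compact open subsets of a totally disconnected space, closed under finite intersections (allowing $\emptyset$), which is independent, has linearly independent characteristic functions. Suppose $\sum_{j=1}^n \lambda_j 1_{V_j} = 0$ with all $\lambda_j\neq 0$ and the $V_j$ distinct. Pick $V_{j_0}$ minimal among $V_1,\dots,V_n$ with respect to inclusion. Evaluating the relation at a point of $V_{j_0}$ that lies in no strictly smaller $V_j$: such a point exists, because otherwise $V_{j_0}\subseteq\bigcup\{V_j : V_j\subsetneq V_{j_0}\}\cup\bigcup\{V_j : V_j\not\supseteq V_{j_0}\}$, but intersecting with $V_{j_0}$ gives $V_{j_0} = \bigcup_j (V_{j_0}\cap V_j)$ over those $j$ with $V_{j_0}\cap V_j\subsetneq V_{j_0}$; by closure under intersection each $V_{j_0}\cap V_j\in\cV$, and none equals $V_{j_0}$, contradicting independence. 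At such a point the only surviving term is $\lambda_{j_0}$, forcing $\lambda_{j_0} = 0$, a contradiction.

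\medskip

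The main obstacle I anticipate is the bookkeeping in the converse direction: carefully producing a point of a minimal $V_{j_0}$ that avoids all $V_j$ properly contained in it, and correctly invoking independence (which is about a set being a union of \emph{others}, so one must ensure $V_{j_0}$ itself is excluded from the covering family, which is exactly why one intersects with $V_{j_0}$ and uses that the resulting pieces are all proper subsets of $V_{j_0}$ lying in $\cV$). One should also remember to treat the possibility $V_{j_0}\cap V_j = \emptyset$ — then $\emptyset\in\cV$ and it is fine, or one simply drops empty pieces from the union. Everything else is routine once the translation via Remark \ref{rem-independent} and Lemma \ref{lem-generators} is in place.
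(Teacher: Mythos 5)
Your first direction (linear independence $\Rightarrow$ independence) is correct: expanding $\bigvee_{i\in F}e_i$ via Lemma \ref{lem-max}, using closure under multiplication up to $0$ to rewrite each product as some $e_{j(H)}$, and checking that no $j(H)$ can equal $i_0$ does produce a nontrivial vanishing linear combination. Note also that the paper offers no proof of this lemma at all (it is declared obvious and referred to \cite[Proposition 2.4]{Li-nuc}), so there is no in-paper argument to compare against; your overall strategy for the converse — pass to characteristic functions and use independence to find a point separating one set from the others — is the standard one.

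However, the converse direction as written has a genuine error: you choose $V_{j_0}$ \emph{minimal} with respect to inclusion, whereas the argument needs it to be \emph{maximal}. Minimality rules out $V_j\subsetneq V_{j_0}$ but does nothing to prevent some $V_j\supsetneq V_{j_0}$; any point of $V_{j_0}$ automatically lies in such a $V_j$, so at the point you construct the surviving terms are $\sum_{j\,:\,V_j\supseteq V_{j_0}}\lambda_j$, not $\lambda_{j_0}$ alone. Concretely, for $V_1=\{1\}\subsetneq V_2=\{1,2\}$ in $\Omega=\{1,2\}$ (an independent, intersection-closed family) your evaluation only yields $\lambda_1+\lambda_2=0$, which is no contradiction. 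The sentence ``at such a point the only surviving term is $\lambda_{j_0}$'' is therefore false under your choice. The repair is immediate: take $V_{j_0}$ maximal among the $V_j$ appearing in the relation. Then for every $j\neq j_0$ one has $V_j\not\supseteq V_{j_0}$, hence $V_{j_0}\cap V_j\subsetneq V_{j_0}$ with $V_{j_0}\cap V_j\in\cV\cup\{\emptyset\}$; your independence argument (which is correctly executed) produces a point of $V_{j_0}$ lying in no other $V_j$, and evaluation gives $\lambda_{j_0}=0$. Deleting that term and repeating (equivalently, inducting downward along the inclusion order) kills all coefficients.
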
}

\begin{definition}\label{def-regular}
Suppose that $\Om$ is a totally disconnected locally compact
Hausdorff space. A family  $\cV$ {of} non-empty compact open
subsets of $\Om$ is called a {\em regular basis} (for the compact
open sets in $\Om$) if the following are satisfied:
\begin{enumerate}
\item $\cV\cup\{\emptyset\}$ is closed under finite intersections;
\item $\cV$   generates the compact open sets of $\Om$;
\item $\cV$ is independent.
\end{enumerate}
Similarly, if $\{e_i:i\in I\}$ is a set of non-zero projections in a commutative C*-algebra $D$, we say that $\{e_i: i\in I\}$ is a {\em regular basis}
for $D$ if it is {(linearly)} independent, closed under multiplication (up to $0$) and generates $D$ as a C*-algebra, which by
Lemma \ref{lem-generators} implies that  $\span\{e_i:i\in I\}$ is a dense subalgebra of $D$.
\end{definition}

We have the following countability result for totally disconnected
spaces. Recall that a topological space $\Om$ is called {\em second
countable} if  it has  a countable {basis} for its topology.

\begin{lemma}\label{lem-countable}
Let $\Om$ be a totally disconnected locally compact Hausdorff space. Then $\Om$ is second countable if and only if
the set $\cU_c(\Om)$ of compact  open subsets of $\Om$ is countable.
\end{lemma}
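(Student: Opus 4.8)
The plan is to prove both implications directly. For the forward direction, suppose $\Om$ is second countable, and fix a countable basis $\mathcal{B}$ for the topology. Since $\Om$ is totally disconnected and locally compact, the compact open sets form a basis, so every compact open set is a union of members of $\mathcal{B}$; after refining, we may assume each member of $\mathcal{B}$ has compact closure, and then — using total disconnectedness again — that each member of $\mathcal{B}$ is itself compact open (every point of a basic open set lies in a compact open subset of it, and there are only countably many compact open subsets appearing this way). Now let $W$ be any compact open set. Cover $W$ by basic sets from $\mathcal{B}$; by compactness finitely many suffice, so $W$ is a finite union of members of $\mathcal{B}$. Since $\mathcal{B}$ is countable, there are only countably many such finite unions, hence $\cU_c(\Om)$ is countable.

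For the reverse direction, suppose $\cU_c(\Om)$ is countable. Since $\Om$ is totally disconnected and locally compact Hausdorff, $\cU_c(\Om)$ is itself a basis for the topology of $\Om$ (every open set is a union of compact open subsets). A countable basis is exactly the definition of second countability, so $\Om$ is second countable. This direction is essentially immediate from the definition of total disconnectedness recalled at the start of \S\ref{sec-prel}.

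The only point requiring a little care is in the forward direction: passing from an arbitrary countable basis $\mathcal{B}$ to a countable basis consisting of compact open sets. The cleanest way is to observe that each $B \in \mathcal{B}$, being open in a locally compact totally disconnected space, is a union of compact open sets; so the family $\mathcal{B}' := \{\,U \in \cU_c(\Om) : U \subseteq B \text{ for some } B \in \mathcal{B}\,\}$ is again a basis, and then one must check $\mathcal{B}'$ is countable. This is where one needs the hypothesis to feed back in — but in fact it is easier to argue without circularity as follows: each $B\in\mathcal B$ is a countable union of compact open sets, because $B$ itself is second countable (as a subspace of a second countable space) and locally compact, hence $\sigma$-compact, so $B=\bigcup_n K_n$ with $K_n$ compact; covering each $K_n$ by finitely many compact open subsets of $B$ exhibits $B$ as a countable union of compact open sets. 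Taking the (countable) union over $B\in\mathcal B$ of all these compact open sets yields a countable basis $\mathcal{B}'\subseteq\cU_c(\Om)$, and the compactness argument above then shows every compact open set is a finite union from $\mathcal{B}'$, giving $|\cU_c(\Om)|\le\aleph_0$. I expect the main (minor) obstacle to be organizing this $\sigma$-compactness step cleanly; everything else is routine.
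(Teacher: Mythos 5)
Your proof is correct and follows essentially the same route as the paper: extract a countable basis of compact open sets, note that every compact open set is a finite union of basis elements by compactness, and observe that the converse is immediate since the compact open sets form a basis. The only difference is that the paper simply asserts the existence of a countable basis consisting of compact open sets, whereas you justify it via $\sigma$-compactness of each basic open set — a reasonable (if slightly longer) way to supply that standard fact.
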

\begin{proof} If $\Om$ is  second countable we can find a countable {basis}  $\cU$  for the topology of $\Om$ consisting of
compact open subsets of $\Om$. But then each compact open subset of $\Om$ is a finite union of elements in $\cU$, which shows that $\cU_c(\Om)$ is countable. The converse is clear.
\end{proof}

\begin{remark}\label{rem-countable}
It follows from the above lemma that if  $\Om$ is a second countable totally disconnected locally compact Hausdorff space, then every regular {basis} $\cV$ for the compact open sets of $\Om$ is countable.
\end{remark}

 For second countable spaces $\Om$ we can prove
the existence of a regular {basis} for the compact open sets in $\Om$:

\begin{proposition}\label{prop-exists}
Let $\Om$ be a second countable totally disconnected locally compact space. Then there exists a regular {basis} $\cV$ for the compact open sets of $\Om$.
\end{proposition}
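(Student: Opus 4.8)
The plan is to build the regular basis $\cV$ by a careful recursive procedure on a fixed enumeration of the compact open sets. First I would invoke Lemma~\ref{lem-countable} to fix an enumeration $\cU_c(\Om)=\{U_1,U_2,\ldots\}$ (allowing repetitions if $\cU_c(\Om)$ happens to be finite, or if $\Om=\emptyset$ in which case the statement is trivial). The idea is to process the $U_n$ one at a time, at each stage enlarging a finite ``toolkit'' of compact open sets into one that is closed under intersection, still generates everything processed so far, and — crucially — remains independent; the final $\cV$ will be the union of all these toolkits (with $\emptyset$ removed). Closure under finite intersections of the whole union follows because each new batch I add will be chosen to be intersection-closed together with everything before it. Condition (2) of Definition~\ref{def-regular} is automatic: every $U_n$ lies in the algebra generated by the $n$-th toolkit, hence in the algebra generated by $\cV$, so by Lemma~\ref{lem-generators} (via the characteristic-function reformulation) $\cV$ generates $\cU_c(\Om)$.

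The heart of the matter is maintaining independence while forcing each $U_n$ into the generated family. Here is the key step I would carry out. Suppose after finitely many stages we have an intersection-closed, independent finite family $\cW$, and we want to absorb a new compact open set $U$. By Lemma~\ref{lem-generators}(3) the span of $\{1_W:W\in\cW\}$ is an algebra, and since the $W\in\cW$ together with their complements in any ambient compact open set cut $\Om$ into finitely many ``atoms'' (minimal nonempty intersections of the $W$'s and their relative complements), one can write $\Om$ restricted to $\bigcup\cW\cup U$ as a finite disjoint union of atoms. The natural move is to refine: replace $\cW$ by the family of all nonempty sets of the form $W\cap A$ and, more importantly, adjoin the atoms of the refinement determined by $\cW\cup\{U\}$. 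The genuinely delicate point is that naively adding the atoms can destroy independence — an atom might equal a union of other atoms is impossible (atoms are disjoint and nonempty), but an old set $W$ could become a union of new atoms, which is exactly what independence forbids. The resolution I expect to use: take $\cV$ to consist of the atoms themselves together with all their finite intersections — but atoms are already disjoint so finite intersections are empty — hence simply let the new toolkit be \emph{the set of atoms of the Boolean algebra generated by $\cW\cup\{U\}$} inside a large enough compact open set, plus all finite \emph{unions}? No: unions are not allowed in a regular basis. The correct fix, which I would verify carefully, is that a family of \emph{pairwise disjoint} nonempty compact open sets is automatically independent (a disjoint set can only be a union of others if it is one of them), and is trivially closed under intersection (up to $\emptyset$); so at each stage I simply take the toolkit to be the finite set of atoms of the Boolean algebra generated by $U_1,\ldots,U_n$. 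Then $\cV:=\bigcup_n\{\text{atoms of }\langle U_1,\ldots,U_n\rangle\}\setminus\{\emptyset\}$.

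With this choice the verification is routine: (3) independence holds because any finite subfamily of $\cV$ that is involved in a union relation $X=\bigcup X_i$ must, after passing to a common refinement stage $n$ large enough to contain all of them, consist of unions of stage-$n$ atoms; but each element of $\cV$ is itself a \emph{single} atom at some stage, and atoms at a later stage refine atoms at an earlier stage, so I must check that an atom at stage $m$ is never a disjoint union of two or more atoms at stage $n\ge m$ unless... it is — and that is the one real subtlety. So independence as literally stated can fail for the union of all atom-families, and the genuine fix is to \emph{not} take all stages' atoms but to argue more carefully, or to invoke that we only need a generating independent family and prune. Concretely, I would instead run a single transfinite-free greedy argument: enumerate $\cU_c(\Om)$, and let $\cV$ be built by: put $U_1\in\cV$; having built a finite independent intersection-closed $\cV_n$ generating $U_1,\ldots,U_n$, write $U_{n+1}$ as a disjoint union of atoms of $\langle\cV_n\cup\{U_{n+1}\}\rangle$, discard $U_{n+1}$ and instead replace each $W\in\cV_n$ by the atoms it contains and add the new atoms, i.e. $\cV_{n+1}:=\{\text{atoms of }\langle \cV_n,U_{n+1}\rangle\}$. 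Since $\cV_{n+1}$ refines $\cV_n$ and consists of pairwise disjoint nonempty sets, it is independent and intersection-closed; and $\cV_n$-members are unions of $\cV_{n+1}$-members, so $\langle\cV_{n+1}\rangle\supseteq\langle\cV_n\rangle\ni U_1,\ldots,U_n$ and also $U_{n+1}$. Finally set $\cV:=\bigcup_n\cV_n$. The main obstacle, and the step deserving the most care in the written proof, is precisely checking that this \emph{union over all $n$} is still independent: I would show that if $X,X_1,\ldots,X_k\in\cV$ with $X=\bigcup_i X_i$, then choosing $n$ with $X,X_1,\ldots,X_k\in\cV_n$ (possible since each lies in some $\cV_m$ and the $\cV_m$ are nested by refinement, so all lie in a common $\cV_n$) reduces to independence of the single disjoint family $\cV_n$, which holds because distinct atoms are disjoint and nonempty. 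Hence $\cV$ is a regular basis.
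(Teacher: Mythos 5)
Your construction has a genuine gap, and it is exactly the one you flagged and then talked yourself out of. The family $\cV=\bigcup_n\cV_n$, where $\cV_n$ is the set of atoms of the Boolean algebra generated by $U_1,\ldots,U_n$, is in general \emph{not} independent. The claimed reduction --- ``choosing $n$ with $X,X_1,\ldots,X_k\in\cV_n$, possible since the $\cV_m$ are nested by refinement, so all lie in a common $\cV_n$'' --- is false: refinement means every element of $\cV_{n}$ is contained in some element of $\cV_m$ for $m\le n$, not that $\cV_m\subseteq\cV_n$. An atom $X$ at stage $m$ that splits at stage $n>m$ into atoms $X_1,\ldots,X_k$ is no longer an atom at stage $n$, yet it remains an element of $\cV$, and $X=\bigcup_{i}X_i$ with each $X_i\in\cV$ and $X_i\ne X$ violates Definition~\ref{def-independentset}. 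The simplest counterexample already occurs for $\Om=\{1,2\}$ with $U_1=\{1,2\}$, $U_2=\{1\}$: then $\cV_1=\{\{1,2\}\}$, $\cV_2=\{\{1\},\{2\}\}$, and $\{1,2\}=\{1\}\cup\{2\}$ sits inside $\cV$. Nor can you repair this by pruning the atoms that split: in a space like the Cantor set every atom eventually splits under any enumeration of $\cU_c(\Om)$, so pruning would leave nothing and destroy the generation property. Pairwise disjointness buys independence \emph{within} each $\cV_n$, but the union over $n$ mixes atoms of different generations, and that is where independence breaks.

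The paper's proof sidesteps this by using a fundamentally different shape of family: writing $\Om$ (reduced to the compact case) as a projective limit of finite sets $F_n$, it chooses compatible orderings of the $F_n$ and takes $\cV$ to consist of the preimages of \emph{initial segments} $\mu_n^{-1}(\{1,\ldots,l\})$. The compatibility condition guarantees that an initial segment at level $n$ reappears verbatim as an initial segment at level $n+1$, so any finite subfamily of $\cV$ can be realized as a chain $V_{n,l_1}\subseteq\cdots\subseteq V_{n,l_r}$ at a single level; then the intersection is the smallest member and the union is the largest member, which gives both closure under intersections and independence for free. If you want to salvage your atom-based picture, you would need to replace ``take all atoms'' by ``take suitable nested unions of atoms'' --- which is essentially what the paper's initial segments are.
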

\begin{proof}
We first observe that it suffices to consider the case where $\Om$
is compact. This follows from the fact that every locally compact
totally disconnected space $\Om$ can be written as the disjoint
union of compact open sets $\{\Om_i:i\in I\}$. Then, if $\cV_i$ is a
regular basis for the compact open sets of $\Om_i$ for all $i\in I$,
then $\cV={\bigcup}_{i\in I} \cV_i$ is a regular {basis} for the compact
open sets in $\Om$.

So assume from now on that $\Om$ is compact. Since  $\Om$ is second
countable, it can be  realized as a projective limit
$\Om=\prolim_{n\in \NN} F_n$ for some projective system $\{F_n;\;
\varphi_n:F_{n+1}\to F_n\}$ in which all sets $F_n$ are finite.
Recall from the construction of this projective limit that a basis
$\cU$ of the topology of $\Om$ consisting of compact open sets is
given by $\cU=\{\mu_n^{-1}(x): n\in \NN, x\in F_n\}$, where, for
each $n\in \NN$,
 $\mu_n:\Om\to F_n$ denotes the canonical mapping.

In order to construct a {regular} {basis} $\cV$ for the compact open sets of
$\Om$ we first construct bijections $\psi_n:\{1,\ldots, k_n\}\to
F_n$, with $k_n=|F_n|$, which satisfy the following compatibility
condition:
 \begin{itemize}
 \item[{(C)}] For each $n\in \NN$ let $m_0:=0$ and
 $m_l:=|\varphi_n^{-1}(\psi_n(\{1,\ldots, l\}))|$ or $l\in \{1,\ldots, k_n\}$.
 We require that $\varphi_n:F_{n+1}\to F_n$ sends $\psi_{n+1}\big(\{m_{l-1}+1, \ldots, m_l\}\big)$ to $\psi_n(l)$ for all $l\in \{1,\ldots, k_n\}$.
 \end{itemize}
 The construction can be done easily by starting with an arbitrary
 bijection $\psi_1:\{1,\ldots, k_1\}\to F_1$ and then defin{ing} the other
 bijections recursively by obeying condition (C) in each step. Having done
 this, we may assume as well that $F_n=\{1, \ldots, k_n\}$  and that
 $\varphi_n(\{m_{l-1}+1, \ldots, m_l\})=\{l\}$ for each
 $1\leq l\leq k_n$.

We then define $\cV:=\{V_{n,l}:=\mu_n^{-1}(\{1,\ldots, l\}): n\in
\NN,1\leq l\leq k_n\}$. To see that this is a regular {basis} for the
compact open sets of $\Om$ we first observe that each basic open set
$\varphi_n^{-1}(\{l\})$ can be obtained as a difference of two sets
in $\cV$, so it is clear that $\cV$ generates the compact open sets
of $\Om$. To check the other conditions,  observe first  that
condition (C) together with the equation $\varphi_n\circ
\mu_{n+1}=\mu_n$  implies that $V_{n,l}=V_{n+1, m_l}$ for all $n\in
\NN, 1\leq l\leq k_n$, with $m_l$ as in (C). By induction, it
follows that $V_{n,l}=V_{m,l'}$ for some suitable $1\leq l'\leq k_m$
whenever, $m\geq n$. So, if finitely many elements $W_1, \ldots,
W_r$ in $\cV$ are given, we may assume that there exist $n\in \NN$
and $1\leq l_1\leq  l_2 \leq\cdots\leq l_r\leq k_n$ such that
$W_i=V_{n, l_i}$ for all $1\leq i\leq r$. The intersection of these
sets then equals $V_{n, l_1}$. {This} proves that $\cV$ is closed
under finite intersections. {The union of the $W_i$} equals
$V_{n, l_r}$, which proves independence.
 \end{proof}

We close this section with a {simple} example which illustrates the concept of regular {bases} for the compact open sets of a totally
disconnected space $\Om$.

 \begin{example}\label{ex-cantor}
 Consider the space $\Om=\{1,-1\}^{\ZZ}$ equipped with the product topology. Then
 $\Om$ is homeomorphic to the Cantor space. Recall that the basic open neighborhoods of an element $x=(x_n)_{n\in \ZZ}\in \Om$
 are given by the sets $W_F(x):=\{y\in \Om: y_n=x_n\;\text{for all}\; n\in F\}$, where $F$ runs through the finite subsets of $\ZZ$.

 For every finite set $F\subseteq \ZZ$ (including $\emptyset$) we define
 $V_F:=\{z\in \Om: z_n=1\;\text{for all}\; n\in F\}$ and we let $\cV$ denote the family of all such sets $V_F$.
Since $V_{F_1}\cap V_{F_2}=V_{F_1\cup F_2}$ we see  that $\cV$ is closed under finite intersections.
To see that it is independent, observe that for finite sets $F_1,\ldots, F_l$ we have
$$V_{F_1}\cup V_{F_2}\cup\cdots\cup V_{F_l}=\{z\in \Om: \exists k\in \{1,\ldots,l\}\;\text{such that}\; z_n=1\;\text{for all}\; n\in F_i\}$$
which is equal to a set $V_F$ if and only if there exists $i_0\in \{1,\ldots, k\}$ such that $F=F_{i_0}$ and $F_i\subseteq F$ for all
$i\in \{1,\ldots, l\}$. Thus it follows that
$\cV$ is a regular basis for the compact open sets of $\Om$ if it generates the compact open sets $\cU_c(\Om)$ of
$\Om$. For this let $\cU$ denote the smallest subset of $\cU_c(\Om)$ which contains $\cV$ and is closed under taking differences, finite intersections and finite unions. It suffices to show that $\cU$ contains
 all basic neighborhoods $W_F(x)$.   To see this we first observe that $\Om=V_{\emptyset}\in \cV$ .
 Then for any fixed $n_0\in \ZZ$ the complement
 $V_{n_0}^-:=\Om\smallsetminus V_{n_0}=\{z\in \Om: z_{n_0}=-1\}$ lies in $\cU$. For a given finite subset
 $F$ of $\ZZ$ and any given $x\in \Om$ we then have
 $$W_F(x)=(\bigcap\{V_{n}: n\in F, x_n=1\})\cap(\bigcap\{V_{m}^-: m\in F, x_m=-1\}),$$
 so $W_F(x)\in \cU$.
 \end{example}

\section{$K$-theory of crossed products by actions on totally disconnected spaces}  \label{sec-K-theory}
{In this section we extend the ideas of \cite[\S 6]{CEL} to
study the $K$-theory of crossed products of the form
$C_0(\Om)\rtimes_{\tau,r}G$ for a continuous action of a second
countable locally compact group $G$ on a second countable totally
disconnected locally compact space $\Om$. More generally, we study
the $K$-theory of a crossed product $(A\otimes
C_0(\Om))\rtimes_{\alpha\otimes\tau,r}G$ by a diagonal action where
$\alpha:G\to \Aut(A)$ is an action of $G$ by $*$-automorphisms on a
separable C*-algebra $A$. We will assume that we can find a
$G$-invariant regular basis $\cV$ for the compact open sets in
$\Om$. Moreover, we will use the assumption that $G$ satisfies the
Baum-Connes conjecture for suitable coefficients (see the discussion
below).}

At the end of this section we will use the $K$-theoretic results of this
section to show that a $G$-invariant regular basis for the compact open sets of $\Om$ cannot always exist (see Examples \ref{ex-notexists} and \ref{ex-rordam} below). But the results in \cite{Li-nuc} show that such a basis does exist in many interesting situations connected to the study of crossed products by  semigroups {(e.g., see \S \ref{sec-quasi-lattices} for explicit examples)}.
 Let us give a first positive example:

\begin{example}\label{ex-cantor-action}
Consider  the Cantor set $\Om=\{1,-1\}^{\ZZ}$ of Example \ref{ex-cantor}.  Then $\ZZ$ acts on $\Om$ by the shift, i.e.,
$(m\cdot x)_n:=x_{n-m}$ for $m\in \ZZ$ and  $x=(x_n)_{n\in \ZZ}\in \Om$. It is then  clear that the
regular basis $\cV=\{V_F: F\subseteq \ZZ\;\text{finite}\}$ as constructed in Example \ref{ex-cantor} is $\ZZ$-invariant.
\end{example}

{F}rom now on we assume that $\cV=\{V_i:i\in I\}$ is  a
$G$-invariant regular basis for the  compact open sets in $\Om$. We
then may assume without loss of generality that $G$ acts on the
index set $I$ via a homomorphism $\mu:G\to S_I$ of $G$  into the
group of permutations of $I$ such that $g\cdot V_i=V_{g i}$ for all
$i\in I$ and $g\in G$. Note that it follows from Lemma
\ref{lem-countable} that $I$ is countable (we always assume that the
assignment $i \mapsto V_i$ is bijective). In what follows, we  equip $I$
with the {\em discrete} topology.

\begin{remark}\label{rem-stabilizer}
We should remark that, although $G$ is not assumed to be discrete, the action of $G$ on $I$ is automatically continuous, which just means that the stabilizers $G_i=\{g\in G: gi=i\}$ are open in $G$ for all $i\in I$. This follows from the fact that
$G_i$ coincides with the stabilizer  $G_{1_{V_i}}=\{g\in G: \tau_g(1_{V_i})=1_{V_i}\}$ for the function $1_{V_i}$ under
the continuous $\tau$ action of $G$ on $\Om$. But $G_{1_{V_i}}=\{g\in G: \|\tau_g(1_{V_i})-1_{V_i}\|_\infty<1\}$ which is open in $G$.
\end{remark}

We are  going to construct a class
$x\in KK^G(C_0(I), C_0(\Om))$ which, under some extra condition on $G$ which we explain below,
induces via descent an isomorphism
$$K_*\big((A\otimes C_0(I))\rtimes_{\alpha\otimes\mu,r}G\big)\cong K_*\big((A\otimes C_0(\Om))\rtimes_{\alpha\otimes \tau,r}G\big),$$
and, in  good cases, even a $KK$-equivalence between
these algebras. The relevant extra conditions are related to the
Baum-Connes conjecture {for} the group $G$, which, in case it
holds, identifies the $K$-theory  of a reduced crossed product
$B\rtimes_{\beta,r}G$ with the topological $K$-theory
$K_*^{\top}(G;B)$ of $G$ with coefficients in $B$. To be more
precise, for every C*-dynamical system $(B,G,\beta)$ there is a
canonical {\em assembly map}
$$\mu_\beta: K_*^{\top}(G;B)\to K_*(B\rtimes_{\beta,r}G)$$
and we say that $G$ satisfies the Baum-Connes conjecture for $B$ if
this map is an isomorphism. By work of Higson and Kasparov
\cite{HK}, the Baum-Connes conjecture holds for all $G$-algebras $B$
if one can find a proper $G$-algebra $\mathcal A$ which is
$G$-equivariantly $KK$-equivalent to $\CC$. (Recall that $\mathcal
A$ is called a proper $G$-algebra if there exists a locally
compact proper $G$-space $X$ such that  there exists a nondegenerate
$G$-equivariant $*$-homomorphism $\Phi:C_0(X)\to ZM(\mathcal A)$.)
In this case we say that $G$ satisfies  the {\em strong Baum-Connes
conjecture} with arbitrary coefficients. For our purposes we do not
need to know anything about the definition of the  topological
$K$-theory group, but the interested reader is referred to
\cite{BCH} for an introduction to this interesting theory.

The  result which is important for us is the  following
proposition. It is taken from \cite{ENO}, but is based on earlier
work in \cite{CEO, MN, ELPW}, and gives a more detailed formulation of the
principle {(DC)} of the introduction:

\begin{proposition}\label{prop-BC}
Let $A$ and $B$ be $G$-algebras and let $x\in KK^G(A,B)$.
Let $j_G(x)\in KK_0(A\rtimes_{\alpha,r}G, B\rtimes_{\beta,r}G)$ denote the descent of $x$
for the reduced crossed products. For every compact subgroup $H$ of $G$ let
$$\varphi_H: K_*(A\rtimes_\alpha H)\to K_*(B\rtimes_\beta H);\;\;\varphi_H(y)=y \otimes j_H(\res_H^G(x)).$$
where ``$\otimes$'' denotes the Kasparov product.
Then the following are true:
\begin{enumerate}
\item If $G$ satisfies the Baum-Connes conjecture for $A$ and $B$ and if
$\varphi_H$ is an isomorphism for every compact subgroup
$H$ of $G$, then $\cdot\otimes {j_G(x)}:K_*(A\rtimes_{\alpha,r}G)\to K_*(B\rtimes_{\beta,r}G)$ is an isomorphism.
\item If $G$ satisfies the strong Baum-Connes conjecture and if  $j_H(\res_H^G(x))$
is a $KK$-equivalence between $A\rtimes_{\alpha} H$ and $B\rtimes_\beta H$ for all compact subgroups
$H$ of $G$, then
$j_G(x)$ is a $KK$-equivalence between $A\rtimes_{\alpha,r} G$ and $B\rtimes_{\beta,r} G$.
\end{enumerate}
\end{proposition}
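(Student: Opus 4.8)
The proof combines the naturality of the Baum--Connes assembly map with the ``going-down'' technique of \cite{CEO,MN}, which reduces questions about reduced crossed products by $G$ to questions about its compact subgroups. For part (1), Kasparov product with $x$ induces a homomorphism $x_\ast\colon K_*^{\top}(G;A)\to K_*^{\top}(G;B)$, and by naturality of the assembly map with respect to $KK^G$-morphisms the square whose vertical arrows are $\mu_\alpha$ and $\mu_\beta$, whose top arrow is $x_\ast$ and whose bottom arrow is $\cdot\otimes j_G(x)$, commutes. Under the Baum--Connes hypothesis for $A$ and $B$ the vertical arrows are isomorphisms, so it suffices to prove that $x_\ast$ is an isomorphism. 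For this I would use the geometric model $K_*^{\top}(G;A)=\varinjlim_Y KK^G(C_0(Y),A)$, the colimit running over the $G$-compact $G$-invariant subsets $Y$ of a classifying space $\underline{E}G$ for proper actions, on which $x_\ast$ is given termwise by Kasparov product with $x$. Hence the claim reduces to showing that $\cdot\otimes_A x\colon KK^G(C_0(Y),A)\to KK^G(C_0(Y),B)$ is an isomorphism for every such $Y$.

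This is proved by induction over the equivariant cells of $Y$. Each $G$-compact proper $G$-space carries a finite filtration whose successive stages are obtained by attaching equivariant cells of the form $G\times_H V$, with $H\le G$ compact and $V$ a finite-dimensional orthogonal $H$-representation; attaching such a cell to a closed $G$-invariant subset $Z\subseteq Y$ produces a short exact sequence $0\to C_0(Y\setminus Z)\to C_0(Y)\to C_0(Z)\to 0$, hence a six-term exact sequence relating $KK^G(C_0(Y),A)$, $KK^G(C_0(Z),A)$ and $KK^G(C_0(Y\setminus Z),A)$, so that the five lemma reduces the assertion to the case of a single cell. For a cell, the equivariant Thom isomorphism gives a $KK^G$-equivalence $C_0(G\times_H V)\sim C_0(G/H)$ up to a degree shift, while the induction--restriction adjunction applied to $C_0(G/H)=\Ind_H^G\CC$, together with the Green--Julg theorem, produces natural isomorphisms $KK^G(C_0(G/H),A)\cong KK^H(\CC,\res_H^G A)\cong K_*(A\rtimes_\alpha H)$ that are compatible with descent; under these identifications the map $\cdot\otimes_A x$ is transported to $\varphi_H$. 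Since $\varphi_H$ is an isomorphism for every compact $H\le G$ by hypothesis, the induction closes, and passing to the colimit over $Y$ yields (1).

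For part (2), the strong Baum--Connes conjecture supplies a proper $G$-algebra $\mathcal A$ together with a $KK^G$-equivalence $\mathcal A\sim_{KK^G}\CC$. Since $j_G$ carries $KK^G$-equivalences to $KK$-equivalences, tensoring everything in sight with $\mathcal A$ reduces to the case where $A$ and $B$ are proper $G$-algebras, for which the reduced and full crossed products coincide. For proper $G$-algebras the invertibility of $j_G(x)$ in $KK$ is detected fibrewise over the orbit space of the underlying proper $G$-space: the stabiliser $G_y$ of a point $y$ is compact, and the ``restriction'' of $j_G(x)$ over the orbit of $y$ agrees, up to Morita equivalence, with $j_{G_y}(\res_{G_y}^G x)$, which is a $KK$-equivalence by hypothesis. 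Hence $j_G(x)$ is a $KK$-equivalence. Equivalently, in the localisation-of-categories language of \cite{MN} the strong Baum--Connes conjecture says that $KK^G$ is generated as a localising subcategory by the compactly induced algebras $\Ind_H^G D$, on which $j_G$ computes $D\rtimes H$ up to Morita equivalence, so the cell induction of part (1) can be repeated verbatim at the level of $KK$ rather than of $K$-theory.

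The step I expect to be the main obstacle is the going-down reduction itself: setting up the cell-by-cell --- equivalently, fibrewise --- induction over an arbitrary $G$-compact proper $G$-space, controlling the cells $G\times_H V$ by equivariant Bott/Thom periodicity, and carrying this out without assuming $G$ discrete. This is precisely the technical content of \cite{CEO} (and of the framework of \cite{MN}) on which \cite{ENO} is built; granting that machinery, the identification of the resulting local maps with $\varphi_H$, respectively with $j_H(\res_H^G x)$, and the accompanying five-lemma and colimit bookkeeping are routine.
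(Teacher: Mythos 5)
The paper does not prove this proposition at all: it is quoted from \cite{ENO} and rests on the going-down machinery of \cite{CEO} and \cite{MN}, and your outline is precisely the standard argument from those references --- naturality of the assembly map plus induction over equivariant cells $G\times_H V$ with $H$ compact for part (1), and the proper-algebra/localisation reduction for part (2). Your sketch is correct at the level of strategy, and you correctly identify and defer the genuinely hard technical steps (the simplicial approximation of $G$-compact proper $G$-spaces and the compatibility of assembly with induction from compact subgroups) to \cite{CEO}, which is exactly what the paper itself does.
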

{The Baum-Connes conjecture with coefficients in arbitrary
$G$-algebras admits a counter-example (see \cite{HLS}). On the other
hand, the validity of the conjecture has been checked for many
interesting classes of groups. One of the strongest results is given
by Higson and Kasparov in \cite{HK} where they show that all
a-$T$-menable groups (this includes all amenable groups and all
countably generated free groups) satisfy the strong Baum-Connes
conjecture. }

For the construction of $x$ we start with homomorphisms
$\varphi_i:\CC\to C_0(\Om)$ which map $1\in \CC$ to the projection
$e_i:=1_{V_i}\in C_0(\Om)$. This gives a class in $KK(\CC,
C_0(\Om))$. Viewing now this copy of $\CC$ as the $i$th component of
$C_0(I)=\bigoplus_{i\in I}\CC$ and using the well-known isomorphism
$$KK(C_0(I), C_0(\Om))\cong \prod_{i\in I}KK(\CC, C_0(\Om))$$
we obtain a class $x\in KK(C_0(I), C_0(\Om))$. We need to make this
class $G$-equivariant.  This is a special case of the following
general construction:

\begin{notation}\label{not-prod}
Suppose that $C=\bigoplus_{i\in I}C_i$ is a direct sum of  C*-algebras $C_i$ and suppose that
 for all $i\in I$ we have a homomorphism $\varphi_i:C_i\to B$ into some fixed C*-algebra $B$.
Then there is a
$KK$-class $x\in KK(C, B)$  given by the Kasparov-triple $(\mathcal E,\varphi, 0)$
with $\mathcal E=\ell^2(I)\otimes B$ (with grading given by $\mathcal E_0=\mathcal E, \mathcal E_1=\{0\}$) equipped with the canonical $B$-valued inner product and with
$$\varphi: C\to \mathcal K(\mathcal E)\cong \K(\ell^2(I))\otimes B; \;\;\varphi=\bigoplus_{i\in I}\varphi_i.$$

Alternatively, $x$ is represented by the $*$-homomorphism $\varphi:C\to \K(\ell^2(I))\otimes B$ via the identification $KK(C, \mathcal\K\otimes B)\cong KK(C,B)$ given by multiplication with the $KK$-class $m_I\otimes \id_B$, where $m_I=[(\ell^2(I), \id_{\mathcal K}, 0)]\in KK(\K(\ell^2(I)), \CC)$ denotes the class of the canonical Morita equivalence $\K(\ell^2(I))\sim_M\CC$.

Suppose, moreover, that $\gamma:G\to \Aut(C)$, $\beta:G \to \Aut(B)$ are actions such that $\gamma$ induces an action $\mu:G\to S_I$ of $G$ on $I$ by permutations and
such that $\varphi$ becomes $G$-equivariant with respect to the action  $\Ad \mu\otimes \beta$ on $\mathcal K(\ell^2(I))\otimes B$. Then the action $\mu\otimes\beta:G\to \Aut(\ell^2(I)\otimes B)$ turns $x$ into a class in $KK^G(C,B)$.
(Note that the corresponding class in $KK^G\big(C, \K(\ell^2(I))\otimes B\big)$ is just given by the $G$-map $\varphi:C\to \K(\ell^2(I))\otimes B$.)

We use this to construct equivariant $KK$-elements as follows:
\begin{enumerate}
\item[{(E1)}] {Let}  $B=C_0(\Om)$,
$C={C_0(I)}=\bigoplus_{i\in I}\CC$ and {let}
$\varphi_i:\CC\to C_0(\Om)$ {be} given by
$\varphi_i(1)=e_i=1_{V_i}$ as above. Then it is straightforward to
check that the resulting homomorphism $\varphi=\bigoplus_{i\in
I}\varphi_i: C_0(I)\to \K(\ell^2(I))\otimes C_0(\Om)$ is
$G$-equivariant as required in the previous paragraph, and we obtain
a class $x\in KK^G(C_0(I), C_0(\Om))$.
\item[{(E2)}] {More generally, let $\alpha:G\to \Aut(A)$ be an action
of $G$ on a C*-algebra $A$. Consider the case $B=A\otimes C_0(\Om)$
and $C=A\otimes C_0(I)\cong \bigoplus_{i\in I}A$,
$\psi_i=\id_A\otimes \varphi_i: A\otimes \CC\to A\otimes C_0(\Om)$.
Then $\psi:=\bigoplus_{i\in I}\psi_i:A\otimes C_0(I)\to
\cK(\ell^2(I))\otimes A\otimes C_0(\Om)$ can be identified with
$\id_A\otimes \varphi$ after applying the flip isomorphism
$\cK(\ell^2(I))\otimes A\otimes C_0(\Om)\cong A\otimes
\cK(\ell^2(I))\otimes C_0(\Om)$. Hence the corresponding class in
$KK^G(A\otimes C_0(I), A\otimes C_0(\Om))$ coincides with
${[\id_A]}\otimes_{\CC}x$ where $x\in KK^G(C_0(I), C_0(\Om))$ is as in
{(E1)}.}
\end{enumerate}
\end{notation}

We need some observations regarding this construction. We start with

\begin{lemma}\label{lem-limit} Let $H$ be a closed subgroup of $G$ and
suppose that $C=\bigoplus_{i\in I}C_i$ and $x\in KK^G(C,B)$ are as in the general construction above. Then for each $H$-invariant subset $J\subseteq I$ let $C_J:=\bigoplus_{i\in J}C_i\subseteq C$ and let $B_J\subseteq B$ denote the smallest $H$-invariant C*-subalgebra of $B$ which contains all  images $\varphi_i(C_i)$, $i\in J$.
Then the above construction applied to $C_J, B_J$ and $H$ gives a class
$$x_J=[(\ell^2(J)\otimes B_J, \varphi_J, 0)]\in KK^H(C_J, B_J),$$
with $\varphi_J=\bigoplus_{i\in J}\varphi_i$. Moreover, if
$\iota^C_J: C_J\to C$ and $\iota_J^B: B_J\to B$ denote the inclusions, then
$$[\iota^C_J]{\otimes}_C \res_H^G(x)= x_J\otimes_{B_J}[\iota_J^B]\in KK^H(C_J,B).$$
\end{lemma}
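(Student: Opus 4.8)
The plan is to unwind the definitions in Notation \ref{not-prod} and exhibit explicit Hilbert module maps implementing both sides of the claimed identity. First I would note that for an $H$-invariant subset $J\subseteq I$ the restriction of the $G$-action $\mu\otimes\beta$ on $\ell^2(I)\otimes B$ to the submodule $\ell^2(J)\otimes B_J$ is well defined: $\ell^2(J)$ is a $\mu|_H$-invariant subspace of $\ell^2(I)$ because $J$ is $H$-invariant, and $B_J$ is $H$-invariant by construction, and moreover $\varphi_i(C_i)\subseteq B_J$ for $i\in J$ so that $\varphi_J=\bigoplus_{i\in J}\varphi_i$ lands in $\K(\ell^2(J))\otimes B_J$. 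Hence the general construction applies verbatim to the triple $(C_J,B_J,H)$ and produces the stated class $x_J=[(\ell^2(J)\otimes B_J,\varphi_J,0)]\in KK^H(C_J,B_J)$. This is the routine part.

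Next I would compute both sides of the equality separately as explicit Kasparov triples over $C_J$. On the one hand, $\res_H^G(x)$ is represented by $(\ell^2(I)\otimes B,\varphi,0)$ viewed $H$-equivariantly, and the Kasparov product with $[\iota^C_J]$ on the left is obtained simply by precomposing the representation with $\iota^C_J:C_J\hookrightarrow C$. Since $\varphi\circ\iota^C_J=\bigoplus_{i\in J}\varphi_i$ acts by zero on the summands $\ell^2(\{i\})\otimes B$ with $i\notin J$, the triple $(\ell^2(I)\otimes B,\varphi\circ\iota^C_J,0)$ is the direct sum of $(\ell^2(J)\otimes B,\varphi_J,0)$ and a degenerate triple (the part indexed by $I\setminus J$, on which $C_J$ acts as $0$). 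Degenerate triples are $KK$-trivial, so $[\iota^C_J]\otimes_C\res_H^G(x)=[(\ell^2(J)\otimes B,\varphi_J,0)]\in KK^H(C_J,B)$. On the other hand, $x_J\otimes_{B_J}[\iota_J^B]$ is computed by pushing the module $\ell^2(J)\otimes B_J$ forward along $\iota_J^B:B_J\hookrightarrow B$, i.e.\ forming $(\ell^2(J)\otimes B_J)\otimes_{B_J}B\cong \ell^2(J)\otimes B$ with representation $\varphi_J$ (now regarded as taking values in $\K(\ell^2(J))\otimes B\subseteq\K(\ell^2(J)\otimes B)$) and zero operator. This yields precisely the same triple $(\ell^2(J)\otimes B,\varphi_J,0)$, and one checks the $H$-equivariant structures match on the nose. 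Hence the two sides agree.

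The only point requiring a little care — and the one I would flag as the main (minor) obstacle — is bookkeeping of the grading and the $H$-equivariant Hilbert module structures under the identification $(\ell^2(J)\otimes B_J)\otimes_{B_J}B\cong\ell^2(J)\otimes B$: one must verify that the interior tensor product carries the diagonal $H$-action $\mu|_H\otimes\beta$ and that the inner products correspond, which is immediate from the fact that the inclusion $\iota_J^B$ is $H$-equivariant and the $B_J$-valued inner product on $\ell^2(J)\otimes B_J$ is the restriction of the $B$-valued one. Since everything is concentrated in degree $0$ (both modules have trivial odd part) and both operators are zero, there are no Fredholm or commutator conditions to check, and the equality of $KK^H$-classes follows directly from the equality of the underlying Kasparov triples.
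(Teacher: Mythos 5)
Your proposal is correct and follows essentially the same route as the paper: both sides are written out as explicit Kasparov triples with zero operator, the left-hand product is identified with $\big(\ell^2(I)\otimes B,(\iota_J^{\K}\otimes\iota_J^B)\circ\varphi_J,0\big)$, the right-hand product with $\big(\ell^2(J)\otimes B,(1\otimes\iota_J^B)\circ\varphi_J,0\big)$, and the two are seen to differ by the degenerate triple $\big(\ell^2(I\setminus J)\otimes B,0,0\big)$. The equivariance bookkeeping you flag is exactly the (routine) point the paper leaves implicit, so no further comment is needed.
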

\begin{proof}
The product $x_J\otimes_{B_J}[\iota_J^B]$ is represented by the Kasparov triple
$\big(\ell^2(J)\otimes B_J)\otimes_{B_J}B, \varphi_J\otimes 1_B, 0\big)\cong \big(\ell^2(J)\otimes B, (1_{\ell^2(J)}\otimes \iota_J^B)\circ \varphi_J,0\big)$, while the product $[\iota_J^C]\otimes_{C_J} \res_H^G(x)$ is represented by the
triple $\big(\ell^2(I)\otimes B, (\iota_J^\K\otimes \iota_J^B)\circ \varphi_J, 0\big)$, where
$\iota_J^\K:\K(\ell^2(J))\to\K(\ell^2(I))$ denotes the canonical inclusion.
Since both triples differ by the degenerate triple $\big(\ell^2(I\setminus J)\otimes B, 0, 0\big)$, the result follows.
\end{proof}

We {note} that in the alternative picture where we regard $x$ as an element in \linebreak $KK^G(C, \K(\ell^2(I))\otimes B)$ and $x_J$ as an element in $KK^H(C_J, \K(\ell^2(J))\otimes B_J)$,  the
equation of the above lemma {translates} into the equation
$$[\iota_J^C]\otimes_{C_J}\res_H^G(x)=x_J\otimes_{\K(\ell^2(J))\otimes B_J}[\iota_J^\K\otimes\iota_J^B].$$
 {This} follows  from the equation $\varphi\circ \iota_J^C=(\iota_J^\K\otimes\iota_J^B)\circ \varphi_J$  (which is easily checked on each summand $C_i$ of $C_J$). In the following lemma, $K_*^H(C)=KK_*^H(\CC, C)$ denotes  $H$-equivariant $K$-theory for the {\em compact} subgroup $H$ of $G$. Note that since $I$ is discrete, it follows from Remark \ref{rem-stabilizer} that the orbits for the action
of $H$ on $I$ are automatically finite.

\begin{lemma}\label{lem-finite}
Suppose that $H\subseteq G$ is a compact subgroup and let $\mathcal F$ denote the set of all finite $H$-invariant subsets
of $I$ ordered by inclusion. Then $K^H_*(C)=\lim_{J\in \mathcal F} K_*^H(C_J)$,
$K_*^H(B_I)=\lim_{J\in \mathcal F} K_*^H(B_J)$ and we obtain a commutative diagram
$$
\begin{CD} \lim_{J\in \mathcal F} K_*^H(C_J) @>\lim_{J\in \mathcal F} {([\cdot] \otimes_{C_J} x_J)}>> \lim_{J\in \mathcal F} K_*^H(B_J)\\
@V\cong VV   @VV\cong V\\
K_*^H(C)  @>>[\cdot]\otimes_C \res_H^G(x)> K_*^H(B_I).
\end{CD}
$$
In particular, if all maps $[\cdot]\otimes_{C_J}x_J:K_*^H(C_J) \to K_*^H(B_J)$ are isomorphisms, the same is true
for $[\cdot] {\otimes_C \res_H^G(x)} :K_*^H(C) \to K_*^H(B_I)$
\end{lemma}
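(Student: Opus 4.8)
The plan is to realise the whole square as an inductive limit over $\mathcal F$ and then invoke continuity of equivariant $K$-theory. First I would note that $\mathcal F$ is directed with $\bigcup_{J\in\mathcal F}J=I$: by the remark preceding the lemma each $H$-orbit in $I$ is finite, so the $H$-saturation of any finite subset of $I$ is again a finite $H$-invariant set, whence $\mathcal F$ is cofinal in the directed set of all finite subsets of $I$. Consequently $C=\bigoplus_{i\in I}C_i=\varinjlim_{J\in\mathcal F}C_J$ as an inductive limit of $H$-C*-algebras with structure maps the inclusions $\iota^C_J$. Similarly $\bigcup_{J\in\mathcal F}B_J$ is an increasing union of $H$-invariant C*-subalgebras of $B$ (each $B_J$ being $H$-invariant because $J$ is), hence an $H$-invariant $*$-subalgebra whose closure is $H$-invariant and contains every $\varphi_i(C_i)$, $i\in I$; this closure therefore equals $B_I$, so that $B_I=\varinjlim_{J\in\mathcal F}B_J$ as $H$-C*-algebras.

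Next I would apply continuity of $H$-equivariant $K$-theory under (countable) inductive limits, for the second countable compact group $H$, to obtain the two vertical isomorphisms $\varinjlim_J K^H_*(C_J)\xrightarrow{\cong}K^H_*(C)$ and $\varinjlim_J K^H_*(B_J)\xrightarrow{\cong}K^H_*(B_I)$, the maps into the limits being induced by the inclusions $C_J\hookrightarrow C$ and $B_J\hookrightarrow B_I$. To make sense of the bottom horizontal arrow I would observe, using \lemref{lem-limit} with $J=I$, that $\res_H^G(x)=x_I\otimes_{B_I}[\iota^B_I]$ for the class $x_I\in KK^H(C,B_I)$ obtained by carrying out the general construction over all of $I$ with respect to $H$; in particular $[\cdot]\otimes_C\res_H^G(x)$ takes values in the subgroup $K^H_*(B_I)$ of $K^H_*(B)$, which is how the diagram should be read, with bottom map $[\cdot]\otimes_C x_I$.

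It then remains to verify commutativity. For $J\subseteq J'$ in $\mathcal F$, applying \lemref{lem-limit} inside $C_{J'},B_{J'},H$ to the $H$-invariant subset $J$ gives $[\iota^C_{J,J'}]\otimes_{C_{J'}}x_{J'}=x_J\otimes_{B_J}[\iota^B_{J,J'}]$, so the maps $[\cdot]\otimes_{C_J}x_J$ are compatible with the connecting maps of the two inductive systems and hence assemble to a single map $\varinjlim_J\big([\cdot]\otimes_{C_J}x_J\big)$ between the limits. For a fixed $J$, applying \lemref{lem-limit} inside $C,B_I,H$ to $J\subseteq I$ gives $[\iota^C_J]\otimes_C x_I=x_J\otimes_{B_J}[\iota^{B_I}_J]$, and associativity of the Kasparov product turns this into $\iota^{B_I}_{J,*}\big(\xi\otimes_{C_J}x_J\big)=\iota^C_{J,*}(\xi)\otimes_C x_I$ for every $\xi\in K^H_*(C_J)$; this is exactly the commutativity of the square at the level of $J$, and passing to the limit yields the commutative diagram of the lemma. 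Finally, if every $[\cdot]\otimes_{C_J}x_J$ is an isomorphism then so is the induced map on the inductive limits, and since the two vertical maps are isomorphisms the bottom map $[\cdot]\otimes_C\res_H^G(x):K^H_*(C)\to K^H_*(B_I)$ is an isomorphism as well.

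There is no serious obstacle: the entire content is the compatibility relation of \lemref{lem-limit} together with continuity of $KK^H$, so the only steps requiring genuine care are the bookkeeping for the inductive systems — in particular checking that $B_I=\varinjlim_J B_J$ really does hold as $H$-C*-algebras, which rests on each $B_J$ being $H$-invariant — and the (harmless) identification of the target of $[\cdot]\otimes_C\res_H^G(x)$ with $K^H_*(B_I)$ rather than $K^H_*(B)$.
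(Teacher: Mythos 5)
Your proof is correct and follows essentially the same route as the paper: continuity of $K_*^H$ for the compact group $H$ (which the paper justifies via the Green--Julg identification $K_*^H(C)\cong K_*(C\rtimes H)$), combined with the compatibility relation of Lemma~\ref{lem-limit} applied both to $J\subseteq I$ and to nested pairs $J\subseteq J'$, and then passage to the limit. The paper's proof is just a terser version of yours; your extra bookkeeping (cofinality of $\mathcal F$, the identification $B_I=\varinjlim_J B_J$, and the well-definedness of the top horizontal map) is exactly what the paper leaves implicit.
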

\begin{proof} We note first that equivariant $K$-theory $K_*^H(C)$ is continuous for  compact groups $H$, since it can be identified with $K_*(C\rtimes H)$ via the Green-Julg theorem, and hence continuity follows from continuity of ordinary $K$-theory. Now the previous lemma implies commutativity of the diagram
$$
\begin{CD} K_*^H(C_J) @>[\cdot] \otimes_{C_J} x_J>>  K_*^H(B_J)\\
@V\iota_J^C VV   @VV\iota_J^B V\\
K_*^H(C)  @>>[\cdot]\otimes_C \res_H^G(x) > K_*^H(B_I).
\end{CD}
$$
and the result then follows from taking limits.
\end{proof}

\begin{remark}\label{rem-descent}
Suppose that $H$ is a compact group and $x\in KK^H(C,B)$. Let $j_H(x)\in KK(C\rtimes H, B\rtimes H)$ denote the descent of $x$. Recall that the Green-Julg isomorphism
$$\mu_C^H: K^H_*(C)=KK^H_*(\CC,C)\stackrel{\cong}{\to} K_*(C\rtimes H)$$
can be described as the composition
$$KK^H_*(\CC,C)\stackrel{j_H}{\to}  KK_*(C^*(H), C\rtimes H) \stackrel{p^*}{\to} KK(\CC, C\rtimes H)$$
where $p:\CC\to C^*(H)$ sends $1\in \CC$ to the projection $1_H\in C(H)\subseteq  C^*(H)$ (which is the projection
corresponding to the trivial representation $1_H$ of $H$ in the Peter-Weyl decomposition of $C^*(H)$).
Then the diagram
$$
\begin{CD} K_*^H(C) @> [\cdot]\otimes x>> K_*^H(B)\\
@V\mu_C^HVV  @VV\mu_B^HV\\
K_*(C\rtimes H) @>>[\cdot]\otimes {j_H(x)} > K_*(B\rtimes H)
\end{CD}
$$
commutes. This follows from the fact that $j_H$ preserves Kasparov products, and hence
$$
\mu_C^H(y)\otimes_{C\rtimes H}j_H(x)
=([p]\otimes_{C^*(H)} (j_H(y))\otimes_{C\rtimes H}j_H(x)$$
$$\quad\quad\quad\quad\quad\quad\quad\quad\quad\quad=[p]\otimes_{C^*(H)}j_H(y\otimes_Cx) \\
=\mu_B^H(y\otimes_Cx)
$$
for all $y\in KK_*^H(\CC,C)$. We therefore may replace
$H$-equivariant $K$-theory by the $K$-theory of the corresponding crossed products
everywhere in the above lemma. In particular, we see that  if all maps $[\cdot]\otimes_{C_J}x_J:K_*^H(C_J) \to K_*^H(B_J)$ in that lemma are isomorphisms, then the map
$$[\cdot]\otimes j_H(x):K_*(C\rtimes H)\to K_*(B_I\rtimes H)$$
is an isomorphism, too.
\end{remark}

We are now coming back to the special situation where the
commutative C*-algebra $D=C_0(\Om)$ is generated by the collection
$\{e_i=1_{V_i}:i\in I\}$ of projections corresponding to the
$G$-invariant regular basis $\cV=\{V_i:i\in I\}$. Since $\cV$ is
closed under finite intersections (up to $\emptyset$) it follows
that the family of projections $\{e_i:i\in I\}$ is invariant under
multiplication (up to $0$). Let us consider the case where $I$ is
finite:

\begin{lemma}\label{lem-nilpotent}
Let $D$ be a commutative C*-algebra generated by a multiplicatively
closed (up to 0) and independent finite family of projections
$\{e_i: i\in I\}$. For each $i\in I$ let
$e_i':=e_i-\bigvee_{e_j<e_i}e_j$. Then $\{e_i'\}$ is a family of
nonzero pairwise orthogonal projections spanning $D$. The
transition matrix $\Gamma=(\gamma_{ij})$ determined by the equations
$e_j=\sum_{i\in I}\gamma_{ij}e'_i$, is unipotent and therefore
invertible over $\Zz$.
\end{lemma}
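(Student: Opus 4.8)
The plan is to translate the statement into the combinatorics of a finite Boolean algebra. Since $D$ is generated as a C*-algebra by the finite, multiplicatively-closed-up-to-$0$ family $\{e_i:i\in I\}$, it coincides with $\span\{e_i:i\in I\}$, hence is finite dimensional, so $D\cong C(\Om)$ with $\Om=\Spec(D)$ a finite set; under this identification $e_i=1_{V_i}$ for certain subsets $V_i\subseteq\Om$. First I would record the elementary bookkeeping. Independence forces the $e_i$ to be pairwise distinct (apply Definition~\ref{def-independentproj} with $F=\{i\}$ and $i_0=j$) and nonzero (if some $e_{i_0}=0$, apply it with $F=\emptyset$, so that $\bigvee_{i\in F}e_i=0=e_{i_0}$ and hence $i_0\in\emptyset$, absurd), so the $V_i$ are pairwise distinct nonempty subsets; closure under multiplication up to $0$ says that $\{V_i:i\in I\}\cup\{\emptyset\}$ is closed under intersection; and since $1_\Om\in D=\span\{1_{V_i}:i\in I\}$ one gets $\bigcup_{i\in I}V_i=\Om$. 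Writing $W_i:=V_i\smallsetminus\bigcup_{V_j\subsetneq V_i}V_j$, Lemma~\ref{lem-max} identifies $\bigvee_{e_j<e_i}e_j$ with $1_{\bigcup_{V_j\subsetneq V_i}V_j}$ and hence $e_i'$ with $1_{W_i}$.

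Next I would establish three combinatorial facts about the $W_i$. Non-vanishing $W_i\neq\emptyset$ is immediate from independence: if $V_i=\bigcup_{V_j\subsetneq V_i}V_j$, then Definition~\ref{def-independentset} yields an index $j$ with $V_j\subsetneq V_i$ and $V_j=V_i$, a contradiction. Pairwise disjointness is the one step that genuinely uses both hypotheses: if $\omega\in W_i\cap W_j$ with $i\neq j$, then $\omega\in V_i\cap V_j$, which is nonempty and therefore equals some $V_k$ with $V_k\subseteq V_i$ and $V_k\subseteq V_j$; as $V_i\neq V_j$ at least one inclusion is strict, say $V_k\subsetneq V_i$, and then $\omega\in V_k\subseteq\bigcup_{V_l\subsetneq V_i}V_l$ contradicts $\omega\in W_i$. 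Finally, for each $\omega\in\Om$ the set $\bigcap_{i:\,\omega\in V_i}V_i$ is again one of the $V_i$ (a nonempty finite intersection from the list), it is the smallest $V_i$ containing $\omega$, call it $V_{i(\omega)}$, and one checks directly that $\omega\in W_{i(\omega)}$; hence $\bigsqcup_{i\in I}W_i=\Om$, and more precisely $V_j=\bigsqcup_{V_i\subseteq V_j}W_i$ for every $j\in I$. The former shows that $\{e_i'=1_{W_i}:i\in I\}$ is a family of nonzero projections, disjointness of the $W_i$ makes them pairwise orthogonal, and the latter gives $e_j=\sum_{V_i\subseteq V_j}e_i'$, whence $D=\span\{e_j:j\in I\}\subseteq\span\{e_i':i\in I\}\subseteq D$.

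Reading off the transition matrix is then immediate: $\gamma_{ij}=1$ if $V_i\subseteq V_j$ and $\gamma_{ij}=0$ otherwise. In particular $\gamma_{ii}=1$, and $\gamma_{ij}\neq0$ with $i\neq j$ forces $V_i\subsetneq V_j$, hence $|V_i|<|V_j|$; listing $I$ in an order of non-decreasing $|V_i|$ therefore makes $\Gamma$ upper triangular with all diagonal entries equal to $1$, so $\Gamma-\mathrm{Id}$ is strictly upper triangular, hence nilpotent, and $\Gamma$ is unipotent. Consequently $\det\Gamma=1$ and $\Gamma^{-1}=\sum_{k\geq0}(\mathrm{Id}-\Gamma)^k$ is a finite sum with integer entries, so $\Gamma$ is invertible over $\Zz$. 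The only genuinely delicate point is the disjointness of the $W_i$: this is precisely where closure under intersection and independence are both used, and everything else is routine bookkeeping inside a finite Boolean algebra.
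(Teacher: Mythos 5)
Your proof is correct and follows essentially the same route as the paper's, merely dualized to subsets of the finite spectrum: your disjointness argument for the $W_i$ is exactly the paper's orthogonality step (for $i\neq j$ one of $e_i'e_j=0$ or $e_ie_j'=0$ holds, using multiplicative closure plus distinctness), and the characterization $\gamma_{ij}=1\Leftrightarrow V_i\subseteq V_j$ with the resulting triangular, unipotent structure is identical. The only cosmetic difference is that you obtain spanning from the explicit partition $V_j=\bigsqcup_{V_i\subseteq V_j}W_i$ (which also hands you the transition matrix directly), whereas the paper gets spanning from the dimension count $\dim D\leq |I|$ against the $|I|$ nonzero pairwise orthogonal projections $e_i'$ and determines $\Gamma$ by the separate observation that $e_i'\leq e_j$ forces $e_i\leq e_j$.
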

\begin{proof} Independence shows that $e_i'\neq 0$ for all $i$. For
$i\neq j$ we have at least one of both, $e_i'e_j=0$ or $e_ie_j'=0$.
Both equalities imply $e_i'e_j'=0$. Since  $\dim(D)\,\leq |I|$, the
$e_i'$ linearly span $D$.\\
If $e_i'\leq e_j$, then $e_i'\leq e_ie_j\leq e_i$, whence
$e_ie_j=e_i$ by definition of $e_i'$. This shows that
$\gamma_{ij}=1$ if $e_i\leq e_j$ and $\gamma_{ij}=0$
otherwise. Thus, for the partial ordering $i\leq j\Leftrightarrow
e_i\leq e_j$, the matrix $\Gamma$ is upper triangular with 1's on
the diagonal. Thus $1-\Gamma$ is nilpotent of order $|I|$ (because
there are no strictly increasing sequences of length $\geq |I|$ in
$I$). It follows that $\Gamma$ is invertible with inverse
$\sum_{n=0}^{|I|} (1-\Gamma)^n$.
\end{proof}

\begin{remark}\label{rem-invertible}
Let $C$ and $B$ be two finite dimensional commutative
C*-algebras with bases $\{c_1,\ldots, c_n\}$ and $\{b_1,\ldots,
b_{{m}}\}$ consisting of pairwise orthogonal projections and
equipped with actions of $H$ given by permutations of the bases
induced by homomorphisms $\mu_C: H\to S_n, \mu_B:H\to S_m$. In the
appendix we show that every $H$-equivariant matrix $\Gamma\in
M(m\times n, \ZZ)$ gives rise to a canonical element
$$x_\Gamma\in KK^H(C,B)$$
which by {Lemma \ref{lem-compose}} is invertible if and only if
$\Gamma$ is invertible. We want to compare that construction with
the construction of the element $x_J\in KK^H(C_0(J), D_J)$ in which
$J\subseteq I$ is a finite $H$-invariant set and $D_J\subseteq
C_0(\Om)$ is the subalgebra of $C_0(\Om)$ generated by $\{e_i:i\in
J\}$ {which we assume to be closed under multiplication up to $0$.}

Let $\{e_i': i\in J\}$ be the set of orthogonal projections
constructed from $\{e_i:i\in J\}$ as in the above lemma and let
$\{c_i: i\in J\}$ be the standard {basis} of $C_0(J)$.
Identifying $J$ with $\{1,\ldots, n\}$  for $n=|J|$, we see from the
above lemma that the transition matrix $\Gamma$  for passing  from
$\{e_i:i\in J\}$ to $\{e'_i:i\in J\}$ is invertible and has only
entries $0$ or $1$. Moreover, the element $x_J\in KK^H(C_0(J), D_J)$
coming from our general construction with $C=C_0(J)$ and $B=D_J$ is
given by the Kasparov cycle $[\cE_J, \varphi_J, 0]$ with $\mathcal
E_J=\ell^2(J)\otimes {D_J}=\bigoplus_{j=1}^n D_J$ and in which
$\varphi_J(c_j)$
 acts via the projection $e_j$ in the $j$th component of this direct sum and
 as $0$ in all other components. Thus we may restrict the module  to the nondegenerate
 part $\E_0:= \varphi_J(C_0(J))\cE_J$, which is $\cE_0=\bigoplus_{j=1}^n e_j D_J$.

On the other hand, the element $x_\Gamma\in KK^H(C_0(J), D_J)$ constructed in the appendix  from the transition matrix
$\Gamma$
and the {bases} $\{c_1,\ldots, c_n\}$ of $C_0(J)$ and $\{e'_1,\ldots, e'_n\}$ of $D_J$
is given  by the Kasparov cycle $[\cE, \psi, 0]$ in which
$\cE=\bigoplus_{j=1}^n\left(\bigoplus_{i=1}^n(\CC^{\gamma_{ij}} \otimes \CC e_i')\right)$
and where $\psi(c_j)$  acts via the projection of the $j$-th summand  $\bigoplus_{i=1}^n (\CC^{\gamma_{ij}}\otimes \CC e_i')$
of this module. Now, since $e_j=\sum_{i=1}^n \gamma_{ij}e_i'$ and $\gamma_{ij}$ only takes values $0$ or $1$,  one easily checks that $e_j D_J\cong \bigoplus_{i=1}^n\gamma_{ij}e_i'D_J\cong \bigoplus_{i=1}^n(\CC^{\gamma_{ij}} \otimes \CC e_i')$ as Hilbert $D_J$-modules and that this isomorphism intertwines $\varphi(c_j)$ with $\psi(c_j)$ for all $1\leq j\leq n$.  This
proves  $x_J=x_\Gamma\in KK^H(C_0(J), D_J)$. In particular, it follows that $x_J$ is invertible!

\end{remark}

We are now ready for the main result of this section.


\begin{proposition}\label{prop-finite}
Let $x\in KK^G(C_0(I), C_0(\Om))$ be as in Notation \ref{not-prod}
(N1) and let $A$ be any $G$-algebra.
Then for any compact  subgroup $H\subseteq G$  the
restriction
$$\res_H^G([\id_A]\otimes_{\CC}x)\in KK^H\big(A\otimes C_0(I),
A\otimes C_0(\Om)\big)$$ of  the class $[\id_A]\otimes_{\CC}x\in
KK^G\big(A\otimes C_0(I), A\otimes C_0(\Om)\big)$ induces
isomorphisms
$$[\cdot]\otimes \res_H^G([\id_A]\otimes_{\CC}x): K_*^H\big(A\otimes
C_0(I)\big)\stackrel{\cong}{\to}
K_*^H(A\otimes \C_0(\Om))$$ and, via  descent,
$$[\cdot]\otimes j_H( {\res_H^G} ([\id_A]\otimes_{\CC}x)): K_*\big(\big(A\otimes
C_0(I)\big)\rtimes H\big)\stackrel{\cong}{\to}
K_*\big((A\otimes C_0(\Om))\rtimes H\big).$$

Moreover, if $H\subseteq G$ is compact such that $A\rtimes H_i$ lies
in the bootstrap class for all stabilizers $H_i=\{h\in H: h i=i\}$
(this is for example always true if $A$ is type I) or if
$H=\{e\}$ is the trivial group, then $(A\otimes C_0(I))\rtimes H$
and $(A\otimes C_0(\Om))\rtimes H$ are $KK$-equivalent.
\end{proposition}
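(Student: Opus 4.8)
The plan is to prove the three assertions in order, reducing each to results already in hand. For the first assertion I would apply the continuity Lemma \ref{lem-finite} with the general-construction data $C=A\otimes C_0(I)=\bigoplus_{i\in I}A$, $\psi_i=\id_A\otimes\varphi_i$, $B=A\otimes C_0(\Om)$ (so that $B_I=A\otimes C_0(\Om)$, since the $\psi_i(A)=A\otimes\CC e_i$ generate it), and the $G$-class $[\id_A]\otimes_\CC x$. By cofinality it suffices to verify the hypothesis of that lemma along the subfamily $\mathcal F'$ of those finite $H$-invariant $J\subseteq I$ for which $\{e_i:i\in J\}$ is closed under multiplication up to $0$ --- this family is cofinal because any finite $H$-invariant $J$ sits inside the finite $H$-invariant set of indices of all nonempty intersections $V_{i_1}\cap\cdots\cap V_{i_r}$, $i_1,\dots,i_r\in J$. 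So I must show that for $J\in\mathcal F'$ the map $[\cdot]\otimes_{C_J}x_J$ is an isomorphism on $K_*^H$, where $C_J=A\otimes C_0(J)$, $B_J=A\otimes D_J$, and $D_J\subseteq C_0(\Om)$ is generated by $\{e_i:i\in J\}$. For $J\in\mathcal F'$ one has $x_J=[\id_A]\otimes_\CC x_J^0$ with $x_J^0\in KK^H(C_0(J),D_J)$, and by Remark \ref{rem-invertible} this $x_J^0$ coincides with the class $x_\Gamma$ attached to the transition matrix $\Gamma$ of Lemma \ref{lem-nilpotent}; since $\Gamma$ is unipotent, hence invertible over $\Zz$, Lemma \ref{lem-compose} shows $x_J^0$ is a $KK^H$-equivalence, hence so is $[\id_A]\otimes_\CC x_J^0$ because $[\id_A]\otimes_\CC(-)$ is multiplicative and unital on $KK^H$. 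Passing to the limit in Lemma \ref{lem-finite} then yields the claimed isomorphism on $K_*^H$.

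The second assertion I expect to follow immediately: via the Green--Julg isomorphism the maps $[\cdot]\otimes_{C_J}x_J$ on $K_*^H$ correspond to the maps $[\cdot]\otimes j_H(x_J)$ on the $K$-theory of the crossed products (this is the commuting square of Remark \ref{rem-descent}), and the final sentence of Remark \ref{rem-descent} upgrades "isomorphism for all $J$ in the cofinal family $\mathcal F'$" to "isomorphism for $[\cdot]\otimes j_H(\res_H^G([\id_A]\otimes_\CC x))$".

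For the $KK$-equivalence statement the argument splits. When $H=\{e\}$ the crossed products are just $A\otimes C_0(I)$ and $A\otimes C_0(\Om)$, so it is enough to show that the underlying non-equivariant class $x\in KK(C_0(I),C_0(\Om))$ is invertible, since then $[\id_A]\otimes_\CC x$ is too. Both $C_0(I)$ and $C_0(\Om)$ are separable and commutative, hence in the bootstrap class, so by the UCT it suffices that $x$ induces a $K_*$-isomorphism: $K_1$ vanishes on both sides, and $K_0(x)$ carries the standard basis of $K_0(C_0(I))=\bigoplus_{i\in I}\Zz$ onto $\{[1_{V_i}]:i\in I\}$, which is a $\Zz$-basis of $K_0(C_0(\Om))=C_c(\Om,\Zz)$ --- linear independence is precisely the independence of $\cV$ (Lemma \ref{lem-lin-independent}), and $\Zz$-spanning follows from Lemma \ref{lem-generators} via the integral identities $1_{U\cup W}=1_U+1_W-1_{U\cap W}$ and $1_{U\setminus W}=1_U-1_{U\cap W}$ together with closure of $\cV$ under intersections. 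For a general compact $H$ with all $A\rtimes H_i$ in the bootstrap class, I would instead show that both reduced crossed products lie in the bootstrap class and then invoke the UCT using the $K_*$-isomorphism from the second assertion. Decomposing $C_0(I)$ into finite $H$-orbits (Remark \ref{rem-stabilizer}) and applying Green's imprimitivity theorem to each summand gives $(A\otimes C_0(I))\rtimes H\sim_M\bigoplus_{[i]\in H\backslash I}A\rtimes H_i$, which is bootstrap (the bootstrap class being closed under countable direct sums and Morita equivalence); in particular $A$ is nuclear, so all the crossed products here are nuclear. On the other side $C_0(\Om)=\lim_J D_J$ over the cofinal family $\mathcal F'$, and each $D_J$ is $H$-equivariantly isomorphic to $C_0(J)$ (its minimal projections are the $e_i'$ of Lemma \ref{lem-nilpotent}, permuted by $H$ exactly as $J$ is), so $(A\otimes D_J)\rtimes H\cong(A\otimes C_0(J))\rtimes H$ is Morita equivalent to a finite direct sum of the $A\rtimes H_i$ and hence bootstrap, whence $(A\otimes C_0(\Om))\rtimes H=\lim_J(A\otimes D_J)\rtimes H$ is bootstrap as an inductive limit. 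Then $j_H(\res_H^G([\id_A]\otimes_\CC x))$ is a morphism in $KK$ between separable nuclear bootstrap-class algebras inducing a $K_*$-isomorphism, hence a $KK$-equivalence.

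The finite-dimensional core is already packaged in Lemma \ref{lem-nilpotent} and Remark \ref{rem-invertible}, so I expect the first two assertions to be essentially bookkeeping once the cofinality remark is in place. I expect the genuine difficulty to be in the last assertion: converting a $K$-theory isomorphism into a $KK$-equivalence forces the use of the UCT, and hence requires verifying that $(A\otimes C_0(\Om))\rtimes H$ --- and not merely $(A\otimes C_0(I))\rtimes H$ --- lies in the bootstrap class. This is exactly the point at which the $H$-orbit decomposition, Green's imprimitivity theorem, the identification $D_J\cong C_0(J)$, and the hypothesis on the stabilizer crossed products $A\rtimes H_i$ enter; for $H=\{e\}$ one could also sidestep the UCT by a direct inverse-limit argument exploiting the vanishing of the relevant $KK_1$-groups, but that does not carry over to nontrivial $H$.
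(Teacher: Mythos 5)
Your proof is correct and follows essentially the same route as the paper: reduction via Lemma \ref{lem-finite} to finite $H$-invariant multiplicatively closed $J$, invertibility of $x_J$ from Remark \ref{rem-invertible} and Lemma \ref{lem-compose}, descent via Remark \ref{rem-descent}, and the bootstrap/UCT argument (orbit decomposition, Green's imprimitivity, the $H$-equivariant isomorphism $D_J\cong C_0(J)$ via the $e_i'$, closure under inductive limits) for the $KK$-equivalence. The one point where you go beyond the paper's write-up is the explicit cofinality argument for the subfamily $\mathcal F'$ of multiplicatively closed $J$, which the paper leaves implicit; your direct basis-to-basis verification for $H=\{e\}$ is a harmless variant of the paper's appeal to the general case with $A=\CC$.
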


\begin{proof} It follows from Lemma \ref{lem-finite} (applied to the class ${[\id_A]}\otimes_\CC x$   constructed as in (E2))
 that it suffices to show that the corresponding classes ${[\id_A]}\otimes_{\CC}x_J\in KK^H(A\otimes C_0(J), A\otimes D_J)$ are invertible
for any $H$-invariant finite subset $J\subseteq I$ such that $\{e_i:i\in J\}\cup\{0\}$ is multiplicatively closed, where $D_J\subseteq C_0(\Om)$ is the subalgebra generated by $\{e_i:i \in J\}$.
But this is the case if for all such $J\subseteq I$ the classes $x_J\in KK^H\big(C_0(J), D_J\big)$ are invertible, which follows from
Remark \ref{rem-invertible} above.

Suppose now that $A\rtimes H_i$ lies in the bootstrap class for all
$i\in I$. Then for each finite $H$-invariant subset $J\subseteq I$
we have
$$(A\otimes C_0(J))\rtimes H\cong \bigoplus_{[i]\in H\backslash J} (A\otimes C_0(H/H_i))\rtimes H\sim_M \bigoplus A\rtimes H_i,$$
where the Morita equivalence on the right hand side follows from
Green's imprimitivity theorem \cite[Theorem 17]{Green}. It follows
that $(A\otimes C_0(J))\rtimes H$ is in the bootstrap class. On the
other hand  if $e_i'=e_i-\bigvee_{e_j<e_i}e_j$ for all $i\in J$ is
as in Lemma \ref{lem-nilpotent}, then the morphism $A\otimes
C_0(J)\to A\otimes D_J$ which sends $a\otimes 1_{\{i\}}$ to
$a\otimes e_i'$ for all $i\in J$ is an $H$-equivariant isomorphism,
and hence $(A\otimes C_0(J))\rtimes H\cong (A\otimes D_J)\rtimes H$.
Since the bootstrap class is closed under inductive limits, it
follows that $(A\otimes C_0(I))\rtimes H$ and $(A\otimes
C_0(\Om))\rtimes H$ both lie in the bootstrap class, and hence
satisfy the UCT. Thus the desired $KK$-equivalence follows from
isomorphism in $K$-theory.

If $H=\{e\}$ is the trivial subgroup of $G$, then it follows from
the above arguments (with $A=\CC$) that $x\in KK(C_0(I), C_0(\Om))$
is a $KK$-equivalence, from which it follows that ${[\id_A]}\otimes_\CC
x$ is a $KK$-equivalence between $A\otimes C_0(I)$ and $A\otimes
C_0(\Om)$.
\end{proof}

\begin{remark}

The assertion on type I algebras $A$ in the above proposition follows from the fact that all
type I C*-algebras lie in the bootstrap class (e.g. see
\cite{Black}) and the fact {(see \cite{Tak})} that crossed
products of type I C*-algebras by compact groups are type I.

{Note that the  proposition implies in particular, 
that for all second countable totally disconnected 
spaces $\Om$ 
and any choice $\mathcal V=\{V_i:i\in I\}$ of a regular basis for the compact open sets in $\Om$
(which  exists by Proposition \ref{prop-exists}) 
the class $x\in KK( C_0(I), C_0(\Om))$ constructed above is a $KK$-equivalence.}
\end{remark}

Combining Proposition \ref{prop-finite} with Proposition \ref{prop-BC} we now get:

\begin{theorem}\label{thm-BC}
Suppose that $G$ {satisfies} the Baum-Connes conjecture with
coefficients in $A\otimes C_0(I)$ and in $A\otimes C_0(\Om)$. Then
$$[\cdot]\otimes j_G({[\id_A]}\otimes_{\CC}x):K_*\big(\big(A\otimes
C_0(I)\big)\rtimes_{\alpha\otimes\mu,r}G\big)\to K_*((A\otimes
C_0(\Om)) \rtimes_{\alpha\otimes\tau,r}G)$$ is an isomorphism.
Moreover, if $G$ satisfies the strong Baum-Connes conjecture and
$A\rtimes H$ lies in the bootstrap class for all compact subgroups
$H$ of $G$ which lie in some stabilizer $G_i$ for the action of $G$
on $I$, or if $G$ satisfies the strong Baum-Connes conjecture and
has no non-trivial compact subgroups, then
$$j_G({[\id_A]}\otimes_{\CC}x)\in KK\big((A\otimes C_0(I))\rtimes_{\alpha\otimes\mu, r}G, (A\otimes C_0(\Om))\rtimes_{\alpha\otimes\tau,r}G\big)$$
is a $KK$-equivalence.
\end{theorem}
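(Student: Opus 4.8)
The plan is to combine Proposition~\ref{prop-finite} with the descent principle Proposition~\ref{prop-BC}, applied to the two $G$-algebras $A_1:=A\otimes C_0(I)$ (with the diagonal action $\alpha\otimes\mu$) and $A_2:=A\otimes C_0(\Om)$ (with the diagonal action $\alpha\otimes\tau$), and to the class $[\id_A]\otimes_\CC x\in KK^G(A_1,A_2)$ from Notation~\ref{not-prod}, whose reduced descent is $j_G([\id_A]\otimes_\CC x)$.

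For the first statement I would invoke Proposition~\ref{prop-BC}(1). Its two hypotheses are that $G$ satisfies the Baum--Connes conjecture with coefficients in $A_1$ and $A_2$, which is precisely our assumption, and that for \emph{every} compact subgroup $H\subseteq G$ the map $\varphi_H=[\cdot]\otimes j_H(\res_H^G([\id_A]\otimes_\CC x)):K_*(A_1\rtimes H)\to K_*(A_2\rtimes H)$ is an isomorphism. But this map is exactly the descent isomorphism provided by the first part of Proposition~\ref{prop-finite}, which holds for every compact subgroup $H$ of $G$. Hence Proposition~\ref{prop-BC}(1) gives that $[\cdot]\otimes j_G([\id_A]\otimes_\CC x)$ is an isomorphism on the $K$-theory of the reduced crossed products by $G$, which is the first claim.

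For the $KK$-equivalence statement I would use Proposition~\ref{prop-BC}(2), whose hypotheses are the strong Baum--Connes conjecture for $G$ (assumed) and that $j_H(\res_H^G([\id_A]\otimes_\CC x))$ is a $KK$-equivalence between $A_1\rtimes H$ and $A_2\rtimes H$ for \emph{every} compact subgroup $H\subseteq G$. If $G$ has no non-trivial compact subgroups there is nothing to check beyond $H=\{e\}$, which is covered by the last clause of Proposition~\ref{prop-finite}. In the other case the point is that, for a compact subgroup $H\subseteq G$, the stabilizer $H_i=\{h\in H:hi=i\}$ of any $i\in I$ equals $H\cap G_i$ and is therefore itself a compact subgroup of $G$ contained in the stabilizer $G_i$; so the assumption that $A\rtimes H'$ lie in the bootstrap class for all compact subgroups $H'$ of $G$ contained in some $G_i$ forces $A\rtimes H_i$ to lie in the bootstrap class for all $i\in I$. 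By the second part of Proposition~\ref{prop-finite}, $j_H(\res_H^G([\id_A]\otimes_\CC x))$ is then a $KK$-equivalence for that $H$, and Proposition~\ref{prop-BC}(2) yields that $j_G([\id_A]\otimes_\CC x)$ is a $KK$-equivalence between the reduced crossed products by $G$.

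I do not expect a genuine obstacle: the substantive work has already been done in Proposition~\ref{prop-finite} --- the invertibility of the finite-level classes $x_J$ coming, via Remark~\ref{rem-invertible}, from the unipotent transition matrix of Lemma~\ref{lem-nilpotent} together with the continuity argument of Lemma~\ref{lem-finite} --- and in the cited Proposition~\ref{prop-BC}. The only steps requiring a moment's attention are the identification of $\varphi_H$ in Proposition~\ref{prop-BC} with the descent map of Proposition~\ref{prop-finite}, and the observation $H_i=H\cap G_i$ that converts the bootstrap hypothesis on ``compact subgroups inside stabilizers'' into one on ``stabilizers of compact-group actions''; both are immediate from the definitions.
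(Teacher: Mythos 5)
Your proposal is correct and follows exactly the route the paper takes: the theorem is obtained by combining Proposition~\ref{prop-finite} with Proposition~\ref{prop-BC}, and your two additional observations (identifying $\varphi_H$ with the descent map of Proposition~\ref{prop-finite}, and noting $H_i=H\cap G_i$ to match the bootstrap hypothesis) are the only details the paper leaves implicit.
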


\begin{remark}\label{rem-BC} 
 If $I$ is a countable discrete $G$-space, then it follows from the results of \cite[Theorem 2.5]{CE} and
\cite[Proposition 2.6]{CEN} and the decomposition of $A\otimes C_0(I)$ as given  in (\ref{eq-decom}) below, that
$G$ satisfies the Baum-Connes conjecture with coefficients in $A\otimes C_0(I)$ if (and only if) all stabilizers $G_i$ for the action of $G$ on $I$ satisfy the conjecture with coefficients in $A$ with respect to the restriction of the given action of $G$ on $A$ to the subgroups $G_i$.

On the other hand, $G$ satisfies the Baum-Connes conjecture with coefficients in $A\otimes C_0(\Om)$ if and only if the transformation groupoid $\Om\rtimes G$ satisfies the groupoid version of the Baum-Connes conjecture with coefficients in $A\otimes C_0(\Om)$ induced by the given $G$-action on $A$. We refer to \cite{Tu} for the formulation of the Baum-Connes conjecture for groupoids. There it is shown  that $\Om\rtimes G$ satisfies the Baum-Connes conjecture with arbitrary coefficients if the groupoid
$\Om\rtimes G$ is amenable (or, more generally, a-$T$-menable).  This situation is much more general than simply assuming  that $G$ is amenable or  a-$T$-menable.
Thus we see that the conditions on the Baum-Connes conjecture in Theorem \ref{thm-BC} are in particular satisfied for every $G$-algebra $A$ if the following two conditions hold:
\begin{enumerate}
\item All stabilizers $G_i$ for the action of $G$ on $I$ are amenable (or a-$T$-menable), and
\item the transformation groupoid $\Om\rtimes G$  is amenable (or a-$T$-menable).
\end{enumerate}
\end{remark}

Since $I$ is discrete,  we get a decomposition of
$A\otimes C_0(I)$ as a direct sum of $G$-algebras
\begin{equation}\label{eq-decom}
A\otimes C_0(I)\cong \bigoplus_{[i]\in G\backslash I} A\otimes C_0(G\cdot i)\cong \bigoplus_{[i]\in G\backslash I}
A\otimes C_0(G/G_i)
\end{equation}
where $G_i$ denotes the stabilizer $G_i:=\{g\in G: g\cdot i=i\}$ for $i\in I$ under the $G$-action. We therefore get a
decomposition of the reduced crossed products
$$(A\otimes C_0(I))\rtimes_{\alpha\otimes\mu, r}G\cong \bigoplus_{[i]\in G\backslash I} (A\otimes C_0(G/G_i))\rtimes_{\alpha\otimes\mu_i, r}G,$$
where $\mu_i: G\to \Aut(C_0(G/G_i))$ is the action by left
translation. Thus, by {a} version of Green's imprimitivity
theorem (see \cite[Theorem 17]{Green} and \cite{QS})  we have a
canonical Morita equivalence
$$(A\otimes C_0(G/G_i))\rtimes_{\alpha\otimes\mu_i, r}G\sim_M A\rtimes_{\alpha,r}G_i.$$
Therefore, by Morita invariance and continuity of $K$-theory, we obtain a canonical isomorphism
$$\bigoplus_{[i]\in G\backslash I} K_*(A\rtimes_{\alpha,r}G_i)\cong K_*\big((A\otimes C_0(I))\rtimes_{\alpha\otimes\mu, r}G\big).$$
Combining this with the isomorphism of Theorem \ref{thm-BC}, we obtain

\begin{corollary}\label{cor-BC}
Suppose that $G$, $\Om$, $\alpha:G\to\Aut(A)$  and $\cV=\{V_i:i\in I\}$  are as in Theorem \ref{thm-BC}. Then there is a canonical
$KK$-equivalence  $y_I\in KK_0\big(\bigoplus_{[i]\in G\backslash I} A\rtimes_{\alpha,r}G_i, (A\otimes C_0(I))\rtimes_{\alpha\otimes\mu,r}G\big)$. Combined with the isomorphism of Theorem \ref{thm-BC} we get an  isomorphism
$$
\bigoplus_{[i]\in G\backslash I} K_*(A\rtimes_{\alpha,r}G_i)\cong
K_*\big((A\otimes C_0(\Om))\rtimes_{\alpha \otimes {\tau},r}G\big)
$$
If we have $KK$-equivalence in Theorem \ref{thm-BC}, then the above isomorphism is also induced by a $KK$-equivalence.

In particular, in the special case $A=\CC$ we get an isomorphism
$$\bigoplus_{[i]\in G\backslash I} K_*(C_r^*(G_i))\cong
K_*\big(C_0(\Om)\rtimes_{{\tau},r}G\big).$$ If $G$ satisfies the strong
Baum-Connes conjecture, this isomorphism is obtained from a
$KK$-equivalence $\bigoplus_{[i]\in G\backslash I}
C_r^*(G_i)\sim_{KK} C_0(\Om)\rtimes_{{\tau},r}G$.
\end{corollary}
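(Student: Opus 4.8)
The plan is to assemble the corollary from three ingredients, all already at hand: the orbit decomposition (\ref{eq-decom}) of $A\otimes C_0(I)$, Green's imprimitivity theorem applied orbit by orbit, and the $KK$-level comparison map furnished by Theorem \ref{thm-BC}. Most of the $K$-theoretic content has in fact been recorded in the paragraph preceding the statement; the new point is to realize that comparison through an honest invertible $KK$-class $y_I$, and then to feed in the strong Baum--Connes case of Theorem \ref{thm-BC}.

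First I would, for each orbit $[i]\in G\backslash I$, fix a representative $i$ and observe that the $G$-invariant summand $A\otimes C_0(G\cdot i)\cong A\otimes C_0(G/G_i)$ appearing in (\ref{eq-decom}), with the diagonal action $\alpha\otimes\mu_i$ where $\mu_i$ is left translation, is precisely the induced algebra $\Ind_{G_i}^G(\res_{G_i}^G A)$ (the isomorphism $f\mapsto(sG_i\mapsto\alpha_s(f(s)))$ intertwines the induced $G$-action with the diagonal one). Green's imprimitivity theorem (\cite[Theorem 17]{Green}, see also \cite{QS}) then supplies a canonical $(A\rtimes_{\alpha,r}G_i)$--$\big((A\otimes C_0(G/G_i))\rtimes_{\alpha\otimes\mu_i,r}G\big)$ imprimitivity bimodule $X_i$, hence an invertible class $[X_i]\in KK_0\big(A\rtimes_{\alpha,r}G_i,\,(A\otimes C_0(G/G_i))\rtimes_{\alpha\otimes\mu_i,r}G\big)$.

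Next I would take the $c_0$-direct sum $X_I:=\bigoplus_{[i]\in G\backslash I}X_i$. In a direct sum of Hilbert modules over a direct sum of C*-algebras the summands are mutually orthogonal and mutually annihilating, so $\mathcal K(X_I)\cong\bigoplus_{[i]}\mathcal K(X_i)\cong\bigoplus_{[i]}(A\rtimes_{\alpha,r}G_i)$; combined with the fact that the reduced crossed product commutes with $c_0$-direct sums (the regular representation respects the decomposition), $X_I$ becomes an imprimitivity bimodule between $\bigoplus_{[i]}A\rtimes_{\alpha,r}G_i$ and $\big(\bigoplus_{[i]}A\otimes C_0(G/G_i)\big)\rtimes_{\alpha\otimes\mu,r}G=(A\otimes C_0(I))\rtimes_{\alpha\otimes\mu,r}G$, the last identification being (\ref{eq-decom}). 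I would then set $y_I:=[X_I]\in KK_0\big(\bigoplus_{[i]}A\rtimes_{\alpha,r}G_i,\,(A\otimes C_0(I))\rtimes_{\alpha\otimes\mu,r}G\big)$; this is invertible since an imprimitivity bimodule always defines an invertible $KK$-class, and it is independent of the chosen orbit representatives up to canonical isomorphism, as distinct choices give conjugate stabilizers and canonically isomorphic induced algebras.

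Finally I would compose $y_I$ with the class $z:=j_G([\id_A]\otimes_\CC x)\in KK_0\big((A\otimes C_0(I))\rtimes_{\alpha\otimes\mu,r}G,\,(A\otimes C_0(\Om))\rtimes_{\alpha\otimes\tau,r}G\big)$ of Theorem \ref{thm-BC}. Since $[\cdot]\otimes y_I$ is an isomorphism on $K$-theory (Morita invariance together with $K_*$ commuting with $c_0$-direct sums) and $[\cdot]\otimes z$ is an isomorphism by Theorem \ref{thm-BC}, the Kasparov product with $y_I\otimes_{(A\otimes C_0(I))\rtimes G}z$ yields the asserted isomorphism $\bigoplus_{[i]\in G\backslash I}K_*(A\rtimes_{\alpha,r}G_i)\cong K_*\big((A\otimes C_0(\Om))\rtimes_{\alpha\otimes\tau,r}G\big)$; when Theorem \ref{thm-BC} provides a $KK$-equivalence, $y_I\otimes z$ is a composite of two $KK$-equivalences and hence one itself. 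For $A=\CC$ one only substitutes $\CC\rtimes_r G_i=C_r^*(G_i)$ (trivial action) and $(\CC\otimes C_0(\Om))\rtimes_{\tau,r}G=C_0(\Om)\rtimes_{\tau,r}G$. The nearest thing to an obstacle is the bookkeeping with the infinite direct sum: one must make sure that the $c_0$-sum of the Green bimodules is genuinely an imprimitivity bimodule for the $c_0$-sum of the crossed products --- so that $y_I$ is a single invertible class and not merely invertible orbit by orbit --- which is exactly the orthogonality/annihilation observation above together with the commutation of the reduced crossed product with $c_0$-direct sums; everything else is formal given Theorem \ref{thm-BC} and Green's theorem.
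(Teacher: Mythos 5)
Your proposal is correct and follows essentially the same route as the paper: the orbit decomposition (\ref{eq-decom}), Green's imprimitivity theorem applied orbit by orbit, Morita invariance and continuity of $K$-theory to assemble the class $y_I$, and composition with the class $j_G([\id_A]\otimes_\CC x)$ from Theorem \ref{thm-BC}. Your extra care in realizing $y_I$ as the $c_0$-direct sum of the Green imprimitivity bimodules is a reasonable way to make the single invertible class explicit (the paper instead later describes $y_I$ via full-corner embeddings in Remark \ref{rem-homomorphism}), but the substance is the same.
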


\begin{remark}\label{rem-homomorphism}
We  point out that the isomorphism $$\bigoplus_{[i]\in G\backslash I} K_*(A\rtimes_{\alpha,r}G_i)\cong
K_*\big((A\otimes C_0(\Om))\rtimes_{\alpha\otimes \mu,r}G\big)$$ of the theorem is induced by a $*$-homomorphism
$$\Psi: \bigoplus_{[i]\in G\backslash I}  A\rtimes_{\alpha,r}G_i\to \K(\ell^2(I))\otimes \big((A\otimes C_0(\Om))\rtimes_{\alpha\otimes\mu, r}G\big).$$
which can be described as follows. First of all we have a homomorphism
$$\eta_i:A\rtimes_{\alpha,r}G_i\to (A\otimes C_0(G\cdot i))\rtimes_{\alpha\otimes\tau,r}G$$
given by the inclusion $A\rtimes_{\alpha,r}G_i\hookrightarrow
(A\otimes C_0(G\cdot i))\rtimes_{\alpha\otimes\tau,r}G_i$ induced
from the $G_i$-equivariant inclusion $A\hookrightarrow  A\otimes
C_0(G\cdot i); a\mapsto a\otimes \delta_{i}$ followed by the
inclusion $(A\otimes C_0(G\cdot
i))\rtimes_{\alpha\otimes\tau,r}G_i\hookrightarrow (A\otimes
C_0(G\cdot i))\rtimes_{\alpha\otimes\tau,r}G$ (use \cite[Lemma
2.5.2]{CEL}). This maps $A\rtimes_{\alpha,r}G_i$ bijectively onto a
full corner of $(A\otimes C_0(G\cdot
i))\rtimes_{\alpha\otimes\tau,r}G$ which establishes Green's Morita
equivalence $A\rtimes_{\alpha,r}G_i\sim_M (A\otimes C_0(G\cdot
i))\rtimes_{\alpha\otimes\tau,r}G$. Using the  decomposition
$$(A\otimes C_0(I))\rtimes_{\alpha\otimes\tau,r}G\cong  \bigoplus_{[i]\in G\backslash I} (A\otimes C_0(G\cdot i))\rtimes_{\alpha\otimes\tau,r}G,$$ we then obtain a $*$-homomorphism
$$\eta=\bigoplus_{[i]\in G\backslash I} \eta_i: \bigoplus_{[i]\in G\backslash I}  A\rtimes_{\alpha,r}G_i\to \big(A\otimes C_0(I))\rtimes_{\alpha\otimes\tau,r}G$$
which induces the $KK$-equivalence  $y_I$  of Corollary \ref{cor-BC}.

We then recall from our constructions in Notation \ref{not-prod} that, after passing from $A\otimes C_0(\Om)$ to
the stabilization $A\otimes \K(\ell^2(I))\otimes C_0(\Om)$ (with action on $\K(\ell^2(I))$ given by $\Ad\mu$, where by abuse of notation we let $\mu$ denote the unitary representation of $G$ on $\ell^2(I)$ induced by the given action $\mu$ of $G$ on $I$),  the equivariant $KK$-class
${[\id_A]}\otimes x\in KK^G(A\otimes C_0(I), A\otimes C_0(\Om))\cong KK^G\big(A\otimes C_0(I), A\otimes \K(\ell^2(I))\otimes C_0(\Om)\big))$
is represented by the $G$-equivariant $*$-homomorphism
$\id_A\otimes \varphi: A\otimes C_0(I)\to A\otimes \K(\ell^2(I))\otimes C_0(\Om)$
with $\varphi=\bigoplus_{i\in I}\varphi_i$ as in Notation \ref{not-prod}. Recall that $\varphi_i$ sends the element $a\otimes \delta_i\in A\otimes C_0(I)$ to the element $a\otimes p_i\otimes 1_{V_i}$, where $p_i$ denotes the orthogonal projection onto the  $i$th component of $\ell^2(I)$.
Thus, the descent of ${[\id_A]}\otimes x$ is represented, up to
stabilization,  by the $*$-homomorphism
$$\psi:=(\id_A\otimes\varphi)\rtimes G: (A\otimes C_0(I))\rtimes_{\alpha\otimes \mu,r}G\to (A\otimes \K(\ell^2(I))\otimes C_0(\Om))\rtimes_{\alpha\otimes\Ad\mu\otimes \tau, r}G.$$
Finally note that the algebra $ (A\otimes \K(\ell^2(I))\otimes C_0(\Om))\rtimes_{\alpha\otimes\Ad\mu\otimes \tau, r}G$ is
canonically isomorphic to $\K(\ell^2(I))\otimes \big((A\otimes C_0(\Om))\rtimes_{\alpha\otimes \tau,r}G\big)$
by sending a typical element $(a\otimes k\otimes f) u_g$ of the first algebra to the element
$k\cdot \mu_g\otimes ((a\otimes f) u_g)$ of the second algebra (here we denote by $g\mapsto u_g$  the embedding of $G$ into both crossed products). Taking  compositions, we obtain the desired
$*$-homomorphism $\Psi$.
\end{remark}

We show that restricted to each component
$K_*(A\rtimes_{\alpha,r}G_i)$ the isomorphism of Corollary
\ref{cor-BC} has  a  nicer description. For this observe that for
any given $i\in I$ we have a $G_i$-equivariant embedding
$A\hookrightarrow A\otimes C_0(\Om)$ which sends $a\in A$ to
$a\otimes 1_{V_i}$. This induces an embedding
$A\rtimes_{\alpha,r}G_i\hookrightarrow (A\otimes
C_0(\Om))\rtimes_{\alpha\otimes\mu,r}G_i$. Composing this with the
inclusion $ (A\otimes
C_0(\Om))\rtimes_{\alpha\otimes\mu,r}G_i\hookrightarrow  (A\otimes
C_0(\Om))\rtimes_{\alpha\otimes\mu,r}G$ (use \cite[Lemma
2.5.2]{CEL}) we obtain a canonical inclusion
\begin{equation}\label{eq-typical}
\pi_i:A\rtimes_{\alpha,r}G_i\to  (A\otimes C_0(\Om))\rtimes_{\alpha\otimes\tau,r}G
\end{equation}
given on a typical element $a u_g$, $a\in A, g\in G_i$ by $\pi_i(a u_g)=(a\otimes 1_{V_i})u_g$.

\begin{lemma}\label{lem-KK}
For each $i\in I$ let $j_{i} :A\rtimes_{\alpha,r}G_i\to\bigoplus_{[j]\in G\backslash I} A\rtimes_{\alpha,r}G_j$ denote  the
canonical inclusion and let $\pi_{i}$ be as in (\ref{eq-typical}) above. Then we have
$$[\pi_i]= [j_i] {\otimes y_I \otimes} {j_G({[\id_A]}\otimes_{\CC} x)}\in KK\big(A\rtimes_{\alpha,r}G_i,(A\otimes C_0(\Om))\rtimes_{\alpha\otimes\tau,r}G\big).$$
Therefore, the restriction of the isomorphism $$\bigoplus_{[i]\in G\backslash I} K_*(A\rtimes_{\alpha,r}G_i)\cong
K_*\big((A\otimes C_0(\Om))\rtimes_{\alpha\otimes \mu,r}G\big)$$
 of Corollary \ref{cor-BC} to the summand $K_*(A\rtimes_{\alpha,r}G_i)$ is  given by
 $$(\pi_i)_*:K_*(A\rtimes_{\alpha,r}G_i)\to K_*\big((A\otimes C_0(\Om))\rtimes_{\alpha\otimes \mu,r}G\big).$$
 \end{lemma}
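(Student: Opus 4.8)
The plan is to reduce the identity $[\pi_i] = [j_i] \otimes y_I \otimes j_G([\id_A]\otimes_\CC x)$ to a statement about a single orbit, and then to trace through the explicit description of the maps $\eta$ and $\psi = (\id_A\otimes\varphi)\rtimes G$ recalled in Remark \ref{rem-homomorphism}. First I would note that by the decomposition $A\otimes C_0(I)\cong \bigoplus_{[j]\in G\backslash I} A\otimes C_0(G\cdot j)$ of (\ref{eq-decom}), both sides are compatible with the direct-sum splitting over orbits, so it suffices to compose $j_i$ with the projection onto the single summand $A\rtimes_{\alpha,r}G_i$ and work inside $(A\otimes C_0(G\cdot i))\rtimes_{\alpha\otimes\tau,r}G$. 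In other words, writing $\eta_i$ and $y_I$ as in Remark \ref{rem-homomorphism}, one has $[j_i]\otimes y_I = [\eta_i]\in KK\big(A\rtimes_{\alpha,r}G_i,\,(A\otimes C_0(G\cdot i))\rtimes_{\alpha\otimes\tau,r}G\big)$, so the claim becomes $[\pi_i] = [\eta_i]\otimes j_G([\id_A]\otimes_\CC x)$, where on the right we restrict the codomain of $\eta_i$ along the inclusion into $(A\otimes C_0(I))\rtimes_{\alpha\otimes\tau,r}G$.

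Next I would use that, up to the stabilization isomorphism $(A\otimes\K(\ell^2(I))\otimes C_0(\Om))\rtimes_{\alpha\otimes\Ad\mu\otimes\tau,r}G\cong \K(\ell^2(I))\otimes\big((A\otimes C_0(\Om))\rtimes_{\alpha\otimes\tau,r}G\big)$ of Remark \ref{rem-homomorphism}, the descent $j_G([\id_A]\otimes_\CC x)$ is represented by the honest $*$-homomorphism $\psi=(\id_A\otimes\varphi)\rtimes G$, with $\varphi_i(a\otimes\delta_i)=a\otimes p_i\otimes 1_{V_i}$. Since both $\eta_i$ and $\psi$ are $*$-homomorphisms, the Kasparov product $[\eta_i]\otimes j_G([\id_A]\otimes_\CC x)$ is simply represented by the composition $\Psi\circ j_i = \psi\circ\eta_i$ (followed by the Morita identification), i.e.\ by the $*$-homomorphism $A\rtimes_{\alpha,r}G_i\to \K(\ell^2(I))\otimes\big((A\otimes C_0(\Om))\rtimes_{\alpha\otimes\tau,r}G\big)$. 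So the task reduces to a direct, finite computation: on a typical element $au_g$ with $a\in A$, $g\in G_i$, the map $\eta_i$ sends it to $(a\otimes\delta_i)u_g$ inside $(A\otimes C_0(G\cdot i))\rtimes_{\alpha\otimes\tau,r}G$, and then $\psi$ applies $\id_A\otimes\varphi$ pointwise and crosses with $G$; tracking $\delta_i\mapsto p_i\otimes 1_{V_i}$ and using $g\in G_i$ (so $g$ fixes both $i$ and $V_i$) one finds that the image is $p_i\otimes\big((a\otimes 1_{V_i})u_g\big)$ in $\K(\ell^2(I))\otimes\big((A\otimes C_0(\Om))\rtimes_{\alpha\otimes\tau,r}G\big)$. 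But $e_{ii}\otimes(-)$, with $e_{ii}$ the rank-one projection onto the $i$th coordinate, is exactly the corner embedding realizing the standard Morita equivalence $B\sim_M \K(\ell^2(I))\otimes B$, so under the identification $KK(-,\K\otimes B)\cong KK(-,B)$ this represents precisely $[\pi_i]$, by the formula $\pi_i(au_g)=(a\otimes 1_{V_i})u_g$ of (\ref{eq-typical}). This gives the first displayed identity, and the statement about the restriction of the Corollary \ref{cor-BC} isomorphism then follows immediately, since that isomorphism is $[\cdot]\otimes y_I\otimes j_G([\id_A]\otimes_\CC x)$ and its restriction to the $i$th summand is $[\cdot]\otimes[j_i]\otimes y_I\otimes j_G([\id_A]\otimes_\CC x) = [\cdot]\otimes[\pi_i] = (\pi_i)_*$.

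The main obstacle I anticipate is purely bookkeeping: keeping the several identifications straight — the flip isomorphism of (E2), the stabilization/Morita identifications of Remark \ref{rem-homomorphism}, the passage between $KK(-,\K\otimes B)$ and $KK(-,B)$, and the $G_i$-versus-$G$ crossed product inclusions invoked via \cite[Lemma 2.5.2]{CEL} — and checking that the composite $\psi\circ\eta_i$ really lands in the single matrix entry $p_i\otimes B$ rather than in some diagonal sum. The key point making this work is that $\eta_i$ only uses the function $\delta_i$ supported at the single point $i\in G\cdot i$, and that the stabilizer condition $g\in G_i$ guarantees $g$ preserves that point (equivalently $\tau_g(1_{V_i})=1_{V_i}$, cf.\ Remark \ref{rem-stabilizer}), so no off-diagonal terms appear. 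Once that is verified, no further $KK$-theoretic machinery is needed beyond functoriality of descent and the fact that $j_H$, $j_G$ preserve Kasparov products.
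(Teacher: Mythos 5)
Your proposal is correct and follows essentially the same route as the paper's own proof: both reduce to the explicit homomorphism $\Psi$ of Remark \ref{rem-homomorphism}, compute $\Psi\circ j_i$ on a typical element $au_g$ with $g\in G_i$, use the stabilizer condition to see that the unitary $\mu_g$ acts trivially on the $i$th coordinate (so the image is $p_i\otimes(a\otimes 1_{V_i})u_g$ with no off-diagonal terms), and then identify this with $\pi_i$ via the rank-one corner embedding realizing the Morita equivalence with $\K(\ell^2(I))\otimes\big((A\otimes C_0(\Om))\rtimes_{\alpha\otimes\tau,r}G\big)$. The only cosmetic difference is that you first restrict to the single-orbit summand and phrase the computation through $\eta_i$ rather than working directly with $\Psi\circ j_i$, which changes nothing of substance.
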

\begin{proof} Let $p_i\in \K(\ell^2(I))$ be the projection on the $i$th component of $\ell^2(I)$ and let
$$\theta_i: (A\otimes C_0(\Om))\rtimes_{\alpha\otimes\tau,r}G\to \K(\ell^2(I))\otimes (A\otimes C_0(\Om))\rtimes_{\alpha\otimes\tau,r}G,$$
$  \theta_i(x)=p_i\otimes x$
be the $KK$-equivalence induced by $p_i$ (note that this $KK$-equivalence does not depend on  the
particular choice of the rank-one projection $p_i\in \K(\ell^2(I))$). By Remark \ref{rem-homomorphism} we have
$$ {y_I \otimes j_G({[\id_A]}\otimes_{\CC} x) \otimes} [\theta_i]=[\Psi]$$
in $KK\big(\bigoplus_{[i]\in G\backslash I} A\rtimes_{\alpha,r}G_i\,,\,
\K(\ell^2(I))\otimes(A\otimes C_0(\Om))\rtimes_{\alpha\otimes\tau,r}G\big)$
with
$$\Psi:\bigoplus_{[i]\in G\backslash I} A\rtimes_{\alpha,r}G_i\to
\K(\ell^2(I))\otimes(A\otimes C_0(\Om))\rtimes_{\alpha\otimes\tau,r}G$$ as in the remark. So it suffices to show
that $\theta_i\circ \pi_i= \Psi\circ j_i$. By construction, on a typical element $a u_g\in A\times_{\alpha,r}G_i$
we have $\theta_i\circ \pi_i(a u_g)=p_i\otimes (a\otimes 1_{V_i})u_g$. On the other hand, following the description
of $\Psi$ in the remark, we get $\Psi(j_i(au_g))=p_i \mu_g\otimes (a\otimes 1_{V_i})u_g$. But remember that at this point, $\mu_g$ is the unitary operator acting on $\ell^2(I)$ via the action of $G$ on $I$. Since $g\in G_i$ lies in the
stabilizer of $i\in I$, we get $p_i\mu_g=p_i$. Thus $\theta_i\circ \pi_i(a u_g)= \Psi\circ j_i(au_g)$ for all $a\in A, g\in G_i$
and the result follows.
\end{proof}

\begin{example}\label{ex-cantor-K}
As a first example we want to study the $K$-theory of the Cantor set $\Om=\{1,-1\}^{\ZZ}$ of Example \ref{ex-cantor-action}, i.e.,
we consider the action of $\ZZ$ on $\Om$ given by the shift $(m\cdot x)_n=x_{n-m}$ for $m\in \ZZ$ and $x=(x_n)_{n\in \ZZ}\in \Om$.
Let $\mathcal F(\ZZ)$ denote the family of finite subsets of $\ZZ$ and let  $\cV=\{V_F: F\in \mathcal F(\ZZ)\}$ be the regular basis for the compact open sets in $\Om$ as constructed in Example \ref{ex-cantor}. Then the corresponding action of $\ZZ$ on $\mathcal F(\ZZ)$ is given by translation. {It is free on $\mathcal F(\ZZ)^*:=\mathcal{F}(\ZZ)\smallsetminus\{\emptyset\}$ and it fixes the empty set. Thus, since $\ZZ$  satisfies the strong Baum-Cones conjecture, Corollary~\ref{cor-BC} shows that for any action $\alpha:\ZZ\to \Aut(A)$, we obtain a
$KK$-equivalence between $\big(A\rtimes_{\alpha}\ZZ\big)\oplus\left( \bigoplus_{[F]\in \ZZ\backslash\mathcal F(\ZZ)^*} A\right)$ and $(A\otimes C(\Om))\rtimes_{\alpha\otimes\tau}\ZZ$. In particular, we obtain an isomorphism
$$K_*(A\rtimes_\alpha\ZZ)\oplus \left(\bigoplus_{[F]\in \ZZ\backslash \mathcal F(\ZZ)^*}K_*(A)\right)\cong K_*\big((A\otimes C(\Om))\rtimes_{\alpha\otimes\tau}\ZZ\big).$$
On the summand $K_*(A)$ corresponding to some $[F]\in \ZZ\backslash \mathcal F(\ZZ)^*$, the isomorphism
is induced by the inclusion $a\mapsto a\otimes 1_{V_F}\in A\otimes C(\Om)\subseteq (A\otimes C(\Om))\rtimes_{\alpha\otimes\tau}\ZZ$. On the summand $A\rtimes_{\alpha}\ZZ$ it is given by the descent of the $\ZZ$-equivariant inclusion 
$a\mapsto a\otimes 1_{\Om}$ of $A$ into $A\otimes C(\Om)$.
In the special case where $A=\CC$ we obtain a $KK$-equivalence between
$C(\mathbb T)\oplus C_0(\ZZ\backslash \mathcal F(\ZZ)^*)$ and $C_0(\Om)\rtimes_{\tau}
\ZZ$.}
\end{example}

We now present examples of actions  for which a $G$-invariant regular basis
of the compact open sets of $\Om$ cannot exist. {Our first example shows that there are indeed many $\ZZ$-actions 
on the Cantor set, which do not allow a $\ZZ$-invariant regular basis
for the compact open sets.}

{\begin{example}\label{ex-notexists}
For any prime $p$ let $\ZZ_p$ denote the ring of $p$-adic integers. The underlying additive group is totally disconnected and compact and, as a space, is homeomorphic to the Cantor set. Moreover,
 $\ZZ$ embeds into $\ZZ_p$ as a dense subgroup via $n\mapsto n\cdot 1_p$, where  $1_p$ denotes the multiplicative unit of $\ZZ_p$. 
Consider the translation action $\tau$ of $\ZZ$ on $\ZZ_p$ given by $\tau_n(x)=x+n1_p$. This is a minimal  action 
of the type as studied by Riedel in \cite{Ried}. Let $\chi :\widehat{\ZZ}_p\to\mathbb T$ denote the character given by
evaluation at $1_p$. Since $1_p$ generates a dense subgroup of $\ZZ_p$, the character $\chi$ is faithful.
The image $G:=\chi(\widehat{\ZZ}_p)$ is the Pr{\"u}fer $p$-group, i.e.,  the union of all 
cyclic subgroups of $\mathbb T$ with order a power $p^m$ of $p$. Let $\widehat{\tau}$ denote 
the translation action  of $\widehat{\ZZ}_p\cong G$ on $\mathbb T$. Then there is a canonical isomorphism 
$$C(\mathbb T)\rtimes_{\widehat\tau} \widehat{\ZZ}_p\cong C(\ZZ_p)\rtimes_\tau \ZZ$$
which can be obtained by  representing the crossed products faithfully on $L^2(\widehat{\ZZ}_p\times \mathbb T)$ and 
$L^2(\ZZ\times \ZZ_p)$ via the  canonical regular representations, respectively, and then check that conjugation with the  Plancherel isomorphism 
$\Psi : L^2(\widehat{\ZZ}_p\times \mathbb T)\to L^2(\ZZ_p\times \ZZ)\cong L^2(\ZZ\times \ZZ_p)$ induces the desired 
isomorphism.
Thus we can apply \cite[Theorem 3.6]{Ried} which implies that $K_0(C_0(\ZZ_p)\rtimes \ZZ)$ is isomorphic 
to the group $\ZZ[\frac{1}{p}]=\{\frac{k}{p^l}: k\in \ZZ, l\in \NN_0\}$ (note that the crossed product in question is also isomorphic to the well known Bunce-Deddens algebra). The abelian group $\ZZ[\frac{1}{p}]$ 
 is not isomorphic to any direct sum of copies of $\ZZ$. 
But if there were a $\ZZ$-invariant  regular basis $\cV$ for $\ZZ_p$, Corollary \ref{cor-BC} would imply that 
$K_0(C_0(\ZZ_p)\rtimes \ZZ)$ is isomorphic  to a direct sum of copies of $\ZZ$. 
\end{example}}

{More examples of the above type can be obtained from the results in \cite{Ried}. For example, the 
odometer actions described in \cite[p. 332]{Put} give a big class of $\ZZ$-actions on the Cantor set for which a $\ZZ$-invariant regular basis for the compact open sets cannot exist. 
The following corollary of Theorem \ref{thm-BC} will be used to give  such  an example with
an action of the free group $F_n$.}

\bcor
Let $G$, $\Omega$ be as in Theorem~\ref{thm-BC} (in particular, we assume that $\Omega$ has a $G$-invariant regular basis). In addition, let $G$ be discrete and $K$-amenable in the sense of \cite{Cun}. Then $K_0(C_0(\Omega) \rtimes_{\tau,r} G)$ contains a copy of $\Zz$ as a direct summand.
\ecor

\begin{proof}
Corollary~\ref{cor-BC} tells us that $\bigoplus_{[i]\in G\backslash I} K_0(C_r^*(G_i))\cong K_0\big(C_0(\Om)\rtimes_{\blue{\tau},r}G\big)$. Now since $G$ is $K$-amenable, each of the subgroups $G_i$ is also $K$-amenable by \cite{Cun}. Thus $K_0(C_r^*(G_i)) \cong K_0(C^*(G_i))$ contains a copy of $K_0(\CC) \cong \Zz$ as a direct summand.
\end{proof}

With the help of this corollary, we can now present more examples for which a $G$-invariant regular basis cannot exist. We thank M. R{\o}rdam who drew our attention to the existence of
such examples.

\begin{example}\label{ex-rordam}
Consider the dynamical system from \cite[\S~8.2]{Li-nuc} with the free group $\Fz_n$ acting on the positive part $(\partial \Fz_n)_+$ of its Gromov boundary by left translations. As observed in \cite[\S~8.2]{Li-nuc}, the crossed product $C_0((\partial \Fz_n)_+) \rtimes_r \Fz_n$ is Morita equivalent to $\cO_n$, so $K_0(C_0((\partial \Fz_n)_+) \rtimes_r \Fz_n) \cong \Zz / (n-1) \Zz$. Thus we conclude using the previous corollary that there cannot exist an $\Fz_n$-invariant regular basis for the compact open subsets of $(\partial \Fz_n)_+$.
\end{example}

\begin{question}
Is there an intrinsic characterization for the existence of such invariant regular bases for the compact open subsets in terms of the underlying topological dynamical system?
\end{question}

\begin{remark}
Even if we cannot find a $G$-invariant regular basis there is always the following regularization procedure:

Let $\Omega$ be a totally disconnected, second countable, locally compact $G$-space and consider the $G$-algebra $D = C_0(\Omega)$. We can always find a generating family of compact open subsets $\cV$ of $\Omega$ such that

\begin{itemize}
\item $\cV \cup \left\{ \emptyset \right\}$ is closed under finite intersections,
\item $\cV$ is $G$-invariant.
\end{itemize}

One possibility would be $\cV=\cU_c(\Om)$.  More generally, we can start with an arbitrary generating family $\cV_0$ and let $\cV$ be the smallest family satisfying the two desired conditions above and containing $\cV_0$. Of course, in general, $\cV$ will not be independent. But we can define $D \left\langle \cV \right\rangle$ as the universal C*-algebra $C^*(\left\{e_V \text{: } V \in \cV \right\} \vline \cR)$ with the set of relations $\cR$ given by:
$$
  e_V = e_V^* = e_V^2 \quad\text{and}\quad
  e_{V_1} e_{V_2} =
 \left\{ \begin{matrix}
  e_{V_1 \cap V_2} & \text{ if } V_1 \cap V_2 \neq \emptyset \\
  0 & \text{ else}
  \end{matrix}\right..
$$

As explained in \cite[\S~2]{Li-nuc}, the family of projections $\left\{e_V \text{: } V \in \cV \right\} \subseteq D \left\langle \cV \right\rangle$ is independent. And by universal property, there is a canonical surjective homomorphism $D \left\langle \cV \right\rangle \to D$ given by $e_V \mapsto 1_V$. Let $D_1$ be the kernel of this surjection. We then obtain a short exact sequence $ 0 \to D_1 \to D \left\langle \cV \right\rangle \to D \to 0$, and $D_1$ will be $\left\{ 0 \right\}$ if and only if the family $\cV$ we started with was already independent.

In addition, by universal property of $D \left\langle \cV \right\rangle$, every $g \in G$ gives rise to an automorphism of $D \left\langle \cV \right\rangle$ which is determined by $e_V \mapsto e_{g V}$. With this $G$-action on $D \left\langle \cV \right\rangle$, the canonical homomorphism $D \left\langle \cV \right\rangle \to D$ becomes $G$-equivariant. Thus if $G$ is exact, we obtain from the exact sequence above the following exact sequence of the reduced crossed products:
$$ 0 \to D_1 \rtimes_r G \to D \left\langle \cV \right\rangle \rtimes_r G \to D \rtimes_r G \to 0.$$
We could also dualize and obtain with $\Omega_1 = \Spec D_1$ and $\Omega \left\langle \cV \right\rangle = \Spec (D \left\langle \cV \right\rangle)$ the following exact sequence:

\begin{equation}
\label{ses}
  0 \to C_0(\Omega_1) \rtimes_r G \to C_0(\Omega \left\langle \cV \right\rangle) \rtimes_r G \to C_0(\Omega) \rtimes_r G \to 0.
\end{equation}

Since $\left\{e_V \text{: } V \in \cV \right\} \subseteq D \left\langle \cV \right\rangle$ is independent, this family of projections corresponds to a regular basis of $\Omega \left\langle \cV \right\rangle$, so that our method of computing K-theory applies to the crossed product in the middle of {\eqref{ses}}. The idea would then be to try to use the six term exact sequence in K-theory for {\eqref{ses}} to compute K-theory for $C_0(\Omega) \rtimes_r G$. This of course means that we have to compute K-theory for the ideal in {\eqref{ses}} first. Since $D_1 = C_0(\Omega_1)$ is again of the same form as $D = C_0(\Omega)$, we could iterate this regularization process. However, the question is whether in this iteration, we will at some point be able to determine K-theory for the kernel, i.e. for the analogue of $D_1 \rtimes_r G$.
\end{remark}

\section{$K$-theory of semigroup crossed products} \label{sec-semigroup}
In this section we want to apply the results of the previous section to the study of the $K$-theory of certain semigroup crossed products.
Throughout this section we assume that  $P\subseteq G$ is a subsemigroup of the group $G$ which contains the unit element
$e\in G$. By a right ideal of $P$ (resp. a right $P$-ideal in $G$), we {mean} a subset $X$ of $P$ (resp. $G$) such that $XP=X$. For an arbitrary subset $X$ of $G$ and for $g\in G$ we write $g\cdot X=\{gx:x\in X\}\subseteq G$ for the translate of $X$ by $g$.
Moreover, if $X\subseteq P$ and $p\in P$ we write $pX:=p\cdot X$ and $p^{-1}X=\{y\in P: py\in X\}=(p^{-1}\cdot X)\cap P$.
It is important to observe the difference between the set $p^{-1}\cdot X\subseteq G$ and the set $p^{-1}X\subseteq P$ defined above! We recall from \cite{Li-nuc} the following  definition of constructible right ideals in $P$ and $G$:

\begin{definition}\label{def-ideals}
Let $P\subseteq G$ be as above. Then the set of {\em constructible right ideals} $\JP$  of $P$ is defined as
 the smallest family of subsets of $P$ which contains the empty set $\emptyset$ as well as $P$ and also $pX, p^{-1}X$ for all $X\in \JP$ and $p\in P$.

The set of {\em constructible right $P$-ideals} $\JG$ in $G$ is the
smallest left translation invariant family of subsets $X\subseteq G$
which contains $\JP$ and which is closed under taking finite
intersections.
\end{definition}

As observed in \cite[\S~3]{Li-am}, $\JP$ is automatically closed under finite intersections.

%

If $Y$ is a discrete space and $X\subseteq Y$  we let $E_X:\ell^2(Y)\to \ell^2(X)\subseteq \ell^2(Y)$ denote the orthogonal projection, which is given by multiplication with the characteristic function $ 1_X$ of $X$. If $\cJ\subseteq \mathcal P(Y)$, we let
\begin{equation}\label{eq-D}
D(\cJ)=C^*(\{E_X: X\in \mathcal J\})\subseteq \cL(\ell^2(Y))
\end{equation}
denote the commutative C*-algebra generated by the projections $E_X$, $X\in  \cJ$ and we write $\Om(\mathcal J)$ for
the Gelfand dual $\Spec(D(\mathcal J))$. Recall from Lemma \ref{lem-generators} and Lemma \ref{lem-projections}
that $\Om(\cJ)$ is totally disconnected and that the
family $\cV=\{V_X: X\in \cJ\}$, with $V_X:=\widehat{E_X}^{-1}(\{1\})$, generates the compact open subsets of $\Om(\cJ)$.
Moreover, it is clear that the  representation
$M:\ell^\infty(Y)\to\cL(\ell^2(Y))$ by multiplication operators $M(f)\xi=f\cdot\xi$ restricts to an isomorphism between
$C^*(\{1_X: X\in \cJ\})\subseteq \ell^{\infty}(Y)$ and $D(\cJ)$.
\medskip

If $P\subseteq G$ is a subsemigroup of a group $G$, we put $\DP:=D(\JP)$ and
$\DG:=D(\JG)$ where $D(\JP)$ and $D(\JG)$ are as in (\ref{eq-D}). Similarly, we shall simply write $\Om_P$ and $\Om_{P\subseteq G}$ for {the}
corresponding totally disconnected spaces $\Om(\JP)$ and $\Om(\JG)$, respectively.
Recall that the reduced {left} semigroup C*-algebra $C_{{\lambda}}^*(P)$ is defined as the sub-C*-algebra of $\cL(\ell^2(P))$ which is generated by the isometries $V_p: \ell^2(P)\to \ell^2(P)$ given by $V_p\delta_q=\delta_{pq}$, where $\delta_q$ denotes the Dirac-function at
$q\in P$. For  $X\subseteq P$ let $E_X$ denote the orthogonal projection from $\ell^2(P)$ onto $\ell^2(X)\subseteq \ell^2(P)$ as in the above discussion. Then
$$V_p E_X V_p^*=E_{pX} \quad\text{and}\quad V_p^*E_X V_p=E_{p^{-1}X}.$$
{This shows} that $C_{{\lambda}}^*(P)$ contains all projections $E_X$ with $X\in  \JP$, the set of constructible right ideals in $P$.
Thus, we see that $\DP\cong {C(\Om_P)}$ is a commutative C*-subalgebra of $C_{{\lambda}}^*(P)$.
On the other hand, since the set $\JG$ of constructible right $P$-ideals in $G$ is closed under left translation with elements of $G$,
the C*-algebra $\DG= D(\JG)\subseteq \ell^\infty(G)$ is also invariant under the left translation action $\tau:G\to \Aut(\ell^\infty(G))$.
Thus  we obtain a well defined action $\tau:G\to\Aut({\DG})$ given on the generators by
$$\tau_g(E_X)=E_{{g \cdot X}},\quad \forall X\in \mathcal {\JG}.$$

In what follows we want to compare  $C_{{\lambda}}^*(P)$ with the reduced crossed product ${\DG}\rtimes_{\tau,r}G\cong C_0(\Om_{P\subseteq G})\rtimes_{\tau,r}G$.
Indeed, we want to consider a more general situation in which we start with an action
$\alpha:G\to \Aut(A)$  of $G$ on a C*-algebra $A$.  Then $\alpha$ restricts to an action of $P$ on $A$ by automorphisms, and
we can form the reduced semigroup crossed product $A\rtimes_{\alpha,r}P$ as follows:

Assume that $A$ is represented faithfully and nondegenerately
on the Hilbert space $\cH$. We then obtain a faithful representation $\widetilde{\alpha_P}:A\to \cL(\cH\otimes \ell^2(P))$ by
\begin{equation}\label{eq-tildealpha}
\widetilde{\alpha_P}(a)(\xi\otimes \epsilon_x):= \alpha_x^{-1}(a)\xi\otimes\epsilon_x\quad\forall \xi\in \cH, x\in P.
\end{equation}
The reduced semigroup crossed product $A\rtimes_{\alpha,r}P$ is then defined as
\begin{equation}\label{eq-crossed}
A\rtimes_{\alpha,r}P:=C^*\left(\{\widetilde{\alpha_P}(a)(1_\cH\otimes V_p) : a\in A, p\in P\}\right)\subseteq \cL(\cH\otimes \ell^2(P))
\end{equation}
(see \cite{Li-nuc} for more details).

Let us now recall some results of \cite{Li-nuc} concerning the question under what conditions on $P\subseteq G$ we can realize $A\rtimes_{\alpha,r}P$ as a full corner of the reduced crossed product $\big(A\otimes {\DG}\big)\rtimes_{\alpha\otimes \tau, r}G$. We start by recalling \cite[Lemma 3.6]{Li-nuc}:

\begin{lemma}\label{lem-repcrossed}
Let $\pi:A\otimes {\DG}\to \cL(\cH\otimes \ell^2(G))$ be the representation defined by
$$\pi(a\otimes d)(\xi\otimes \epsilon_x)=\alpha_x^{-1}(a)\xi\otimes d\epsilon_x.$$
Then $(\pi, 1_{\cH}\otimes \lambda_G)$ is a covariant homomorphism of $\big(A\otimes {\DG},\; G, \;\alpha\otimes \tau\big)$
on $\cH\otimes \ell^2(G)$ which induces a faithful representation of the reduced crossed product
$\big(A\otimes {\DG}\big)\rtimes_{\alpha\otimes \tau, r}G$ on $\cL(\cH\otimes \ell^2(G))$.
\end{lemma}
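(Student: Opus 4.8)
# Proof plan for Lemma~\ref{lem-repcrossed}

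The plan is to verify directly that the pair $(\pi, 1_\cH \otimes \lambda_G)$ is a covariant representation, then to identify the integrated form with the reduced crossed product by exhibiting it as a subrepresentation of a regular representation.

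First I would check covariance. One must show $\pi(a \otimes d)$ is a well-defined $*$-homomorphism: this is routine since for fixed $x$ the map $a \otimes d \mapsto (\alpha_x^{-1}(a) \otimes d)$ acting on $\cH \otimes \CC\epsilon_x$ is a $*$-homomorphism, and $\pi$ is the direct sum over $x \in G$ of these, hence bounded and $*$-preserving. Next, the covariance relation: for $g \in G$ one computes $(1_\cH \otimes \lambda_g)\pi(a \otimes d)(1_\cH \otimes \lambda_g)^* (\xi \otimes \epsilon_x)$; since $\lambda_g \epsilon_x = \epsilon_{gx}$, a short calculation gives $\alpha_{g^{-1}x}^{-1}(a)\xi \otimes (g\cdot d)\epsilon_x = \alpha_x^{-1}(\alpha_g(a))\xi \otimes \tau_g(d)\epsilon_x$, which is exactly $\pi\big((\alpha_g \otimes \tau_g)(a \otimes d)\big)(\xi \otimes \epsilon_x)$. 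Here one uses that the multiplication operator for $d \in \DG \subseteq \ell^\infty(G)$ is intertwined with the one for $g \cdot d$ by $\lambda_g$, which is the standard fact about the left translation action on $\ell^\infty(G)$ recalled just before the lemma.

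The substantive point is that the integrated form $\pi \rtimes (1_\cH \otimes \lambda_G)$ factors through, and is faithful on, the \emph{reduced} crossed product. The strategy is to recognize this representation as (unitarily equivalent to) a regular representation induced from a faithful representation of $A \otimes \DG$. Concretely: $A$ acts faithfully and nondegenerately on $\cH$ by hypothesis, and $\DG$ acts faithfully on $\ell^2(G)$ by multiplication operators (it is by definition a sub-C*-algebra of $\cL(\ell^2(G))$ generated by the projections $E_X$), so $A \otimes \DG$ acts faithfully on $\cH \otimes \ell^2(G)$ via $a \otimes d \mapsto a \otimes M_d$. I would then exhibit a unitary $W$ on $\cH \otimes \ell^2(G) \otimes \ell^2(G)$ (or rather identify $\cH \otimes \ell^2(G)$ appropriately) implementing the equivalence between $\pi \rtimes (1 \otimes \lambda_G)$ and $\big(\text{Ind}_{\{e\}}^G(\rho)\big)$ for a suitable faithful representation $\rho$ of $A \otimes \DG$; the twist $a \otimes d \mapsto (\alpha_x^{-1}(a) \otimes d)$ in the definition of $\pi$ is precisely the Fell-bundle/Packer--Raeburn style twist $\epsilon_x \mapsto \alpha_x^{-1}$ that conjugates the ``diagonal'' regular representation into the standard induced one. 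Since the regular representation of a crossed product induced from any faithful representation of the coefficient algebra is faithful on the reduced crossed product, faithfulness follows. This is exactly the argument of \cite[Lemma 3.6]{Li-nuc}, and the main obstacle is purely bookkeeping: keeping the two copies of $\ell^2(G)$ (one from the crossed product, one absorbed into the coefficient algebra via $\DG \subseteq \cL(\ell^2(G))$) straight, and checking the unitary $W$ intertwines the $\lambda_G$-part as well as the coefficient part. No deep input is needed beyond the standard characterization of reduced crossed products via regular representations.

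Finally I would remark that nondegeneracy of $\pi$ holds because $A$ acts nondegenerately on $\cH$ and $\DG$ contains an approximate unit for itself acting nondegenerately on its essential subspace of $\ell^2(G)$ — or, if $\DG$ is not essential on all of $\ell^2(G)$, one passes to the essential subspace, which changes nothing in the $K$-theoretic applications. Thus $(\pi, 1_\cH \otimes \lambda_G)$ integrates to a faithful representation of $(A \otimes \DG) \rtimes_{\alpha \otimes \tau, r} G$, as claimed.
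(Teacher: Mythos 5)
Your covariance computation is correct, and the identification of the lemma with \cite[Lemma 3.6]{Li-nuc} is right (the paper itself only recalls the statement from there). The gap is in the faithfulness step. The representation $\pi\rtimes(1_\cH\otimes\lambda_G)$ is \emph{not} unitarily equivalent to the regular representation induced from the faithful representation $a\otimes d\mapsto a\otimes M_d$ of $A\otimes \DG$ on $\cH\otimes\ell^2(G)$: that regular representation lives on $\cH\otimes\ell^2(G)\otimes\ell^2(G)$, the subspaces $\cH\otimes\CC\epsilon_y\otimes\ell^2(G)$ are invariant, and $\pi\rtimes(1_\cH\otimes\lambda_G)$ is (up to unitary equivalence) only the summand corresponding to $y=e$. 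Equivalently, $\pi\rtimes(1_\cH\otimes\lambda_G)$ is itself literally a regular representation, but induced from the representation $a\otimes d\mapsto d(e)\,a$ of $A\otimes\DG$ on $\cH$ (evaluation of $d$ at the unit $e\in G$), which is \emph{not} faithful. A subrepresentation of a faithful representation, or a regular representation induced from a non-faithful representation of the coefficient algebra, need not be faithful on the reduced crossed product. So the unitary $W$ you propose can only deliver weak containment in the regular representation, i.e.\ that $\pi\rtimes(1_\cH\otimes\lambda_G)$ factors through $(A\otimes\DG)\rtimes_{\alpha\otimes\tau,r}G$; it does not give injectivity.

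To close the gap you need two further ingredients. First, $\pi$ itself is faithful on $A\otimes\DG$: on the summand $\cH\otimes\CC\epsilon_x$ the operator $\pi(a\otimes d)$ acts as $d(x)\,\alpha_{x^{-1}}(a)$, so $\pi$ is the direct sum over $x\in G$ of the representations $(\alpha_{x^{-1}}\otimes\ev_x)$ followed by the given faithful representation of $A$ on $\cH$; since the evaluation functionals $\ev_x$, $x\in G$, separate the points of $\DG\subseteq\ell^\infty(G)$ (so $G$ maps onto a dense subset of $\Om_{P\subseteq G}$), a positive element of $A\otimes\DG\cong C_0(\Om_{P\subseteq G},A)$ killed by all slices $\id_A\otimes\ev_x$ is zero. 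It is exactly here that the concrete realization of $\DG$ inside $\cL(\ell^2(G))$ enters, and this is not covered by your remark that $a\otimes d\mapsto a\otimes M_d$ is faithful, which is a different representation. Second, the compression onto the diagonal, $\Phi(T)=\sum_{x\in G}Q_xTQ_x$ with $Q_x$ the projection onto $\cH\otimes\CC\epsilon_x$, is a faithful positive map satisfying $\Phi\big((\pi\rtimes(1_\cH\otimes\lambda_G))(f)\big)=\pi(f(e))$ for $f\in C_c(G,A\otimes\DG)$, i.e.\ it intertwines the canonical faithful conditional expectation $E$ of the reduced crossed product onto $A\otimes\DG$ with $\pi$. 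Granted the factorization through the reduced crossed product, these two facts give injectivity at once: if $(\pi\rtimes(1_\cH\otimes\lambda_G))(b^*b)=0$ then $\pi(E(b^*b))=0$, hence $E(b^*b)=0$, hence $b=0$. This conditional-expectation argument is the substance of the lemma and is missing from your proposal.
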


Following the notation of \cite{Li-nuc} we introduce the following

\begin{notation}\label{not-reducedimage}
We let $A\rtimes_{\alpha,r}(P\subseteq G)$ denote the (isomorphic) image of $\big(A\otimes {\DG}\big)\rtimes_{\alpha\otimes \tau, r}G$ in $\cL(\cH\otimes \ell^2(G))$ under the representation $\pi\rtimes(1_{\cH}\otimes\lambda_G)$ of the above lemma.
\end{notation}

Since $P\in \JP\subseteq  {\JG}$ we have
$1_A\otimes E_P\in M(A\otimes {\DG})$ which embeds canonically into the multiplier algebra of $(A\otimes {\DG})\rtimes_{\alpha\otimes \tau, r}G$.
Extending the representation $\pi\rtimes(1\otimes\lambda_G)$ of Lemma \ref{lem-repcrossed} to the multiplier algebra maps
$1_A\otimes E_P$ to the projection $1_{\cH} \otimes E_P\in \cL(\cH\otimes\ell^2(G))$. We therefore may consider the
corner
$$(1_{\cH} \otimes E_P)\big(A\rtimes_{\alpha,r}(P\subseteq G)\big)(1_{\cH}\otimes E_P)\subseteq \cL(\cH\otimes \ell^2(P))$$
inside $A\rtimes_{\alpha,r}(P\subseteq G)$.

The following important  lemma is the combination of  \cite[Lemma 3.8]{Li-nuc} with
\cite[Lemma 3.9]{Li-nuc}:

\begin{lemma}\label{lem-toeplitz} Let $P\subseteq G$ be a subsemigroup of the group $G$. Then for every system $(A,\, G,\,\alpha)$ we have that
$(1_{\cH} \otimes E_P)\big(A\rtimes_{\alpha,r}(P\subseteq G)\big)(1_{\cH}\otimes E_P)$ is a full corner
of $A\rtimes_{\alpha,r}(P\subseteq G)$ which contains $A\rtimes_{\alpha,r}P$, and the following are equivalent:
\begin{enumerate}
\item $A\rtimes_{\alpha,r}P=(1_{\cH} \otimes E_P)\big(A\rtimes_{\alpha,r}(P\subseteq G)\big)(1_{\cH}\otimes E_P)$ for every C*-dynamical system $(A,\; G,\; \alpha)$,
\item $C_{{\lambda}}^*(P)=E_PC_r^*(P\subseteq G) E_P$, where we set {$C_r^*(P\subseteq G) :=\CC\rtimes_{\id, r}(P\subseteq G)\cong D_{P\subseteq G}\rtimes_{\tau,r}G$},
\item For all $g\in G$ we have $E_P\lambda_g E_P\in C_{{\lambda}}^*(P)$,
\end{enumerate}
and either of these statements implies $E_P{\DG} E_P\subseteq \DP$.
\end{lemma}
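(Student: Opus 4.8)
The plan is to prove the three components of the statement separately: (i) the corner $(1_\cH\otimes E_P)\big(A\rtimes_{\alpha,r}(P\subseteq G)\big)(1_\cH\otimes E_P)$ is full in $A\rtimes_{\alpha,r}(P\subseteq G)$ and contains $A\rtimes_{\alpha,r}P$; (ii) the cycle $(1)\Rightarrow(2)\Rightarrow(3)\Rightarrow(1)$; and (iii) the implication that any of $(1)$--$(3)$ forces $E_P\DG E_P\subseteq\DP$. First I would record two elementary computations inside $\cL(\ell^2(G))$. For $g\in G$ the operator $E_P\lambda_g E_P$ is the partial isometry $\delta_q\mapsto\delta_{gq}$ defined on the closed span of the $\delta_q$ with $q,gq\in P$, so $E_P\lambda_g E_P=E_{P\cap g\cdot P}\lambda_g$, its source and range projections are $E_{P\cap g^{-1}\cdot P}$ and $E_{P\cap g\cdot P}$, and $(E_P\lambda_g E_P)E_Y(E_P\lambda_g E_P)^*=E_{P\cap g\cdot Y}$ for every $Y\subseteq P$. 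Secondly, directly from the definitions of $\pi$ (Lemma~\ref{lem-repcrossed}) and of $\widetilde{\alpha_P}$, compression to $\cH\otimes\ell^2(P)\subseteq\cH\otimes\ell^2(G)$ gives $(1_\cH\otimes E_P)\pi(a\otimes E_X)(1_\cH\otimes E_P)=\widetilde{\alpha_P}(a)(1_\cH\otimes E_{P\cap X})$ for $X\in\JG$, and $(1_\cH\otimes E_P)(1_\cH\otimes\lambda_p)(1_\cH\otimes E_P)=1_\cH\otimes V_p$ for $p\in P$.

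For (i) I would argue that $\{X\subseteq G:X\subseteq h\cdot P\text{ for some }h\in G\}$ is left-translation invariant, closed under intersection, and contains $\JP$ (whose members are subsets of $P$); hence it contains $\JG$, so every $X\in\JG$ satisfies $E_XE_{h\cdot P}=E_X$ for some $h$. Since $E_{h\cdot P}=\tau_h(E_P)$ is implemented by $\lambda_h$, an approximate-unit argument then puts every spanning element $\pi(a\otimes E_X)(1_\cH\otimes\lambda_g)$ in the closed ideal generated by $1_\cH\otimes E_P$, so the corner is full. Containment of $A\rtimes_{\alpha,r}P$ follows from the second computation above: $X=P$ yields $\widetilde{\alpha_P}(a)$ and $g=p$ yields $1_\cH\otimes V_p$, and these generate $A\rtimes_{\alpha,r}P$.

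For (ii): $(1)\Rightarrow(2)$ is the case $A=\CC$ with trivial action, where the three objects specialise to $C^*_\lambda(P)$, $C^*_r(P\subseteq G)$ and $E_P$. For $(2)\Rightarrow(3)$: $E_P\lambda_g E_P=E_P(E_{g\cdot P}\lambda_g)E_P$ with $E_{g\cdot P}\lambda_g\in\DG\rtimes_{\tau,r}G=C^*_r(P\subseteq G)$ (as $g\cdot P\in\JG$), so $E_P\lambda_g E_P\in E_PC^*_r(P\subseteq G)E_P=C^*_\lambda(P)$. For $(3)\Rightarrow(1)$: by (i) it suffices to show the corner sits inside $A\rtimes_{\alpha,r}P$; a spanning element of the corner equals $\widetilde{\alpha_P}(a)(1_\cH\otimes E_{P\cap X})(1_\cH\otimes E_P\lambda_g E_P)$, and writing $X=\bigcap_j g_j\cdot Y_j$ with $Y_j\in\JP$, the identity $E_{P\cap X}=\prod_j(E_P\lambda_{g_j}E_P)E_{Y_j}(E_P\lambda_{g_j}E_P)^*$ together with $(3)$ and $E_{Y_j}\in\DP$ shows $E_{P\cap X}\in C^*_\lambda(P)$; then $\widetilde{\alpha_P}(a)\in A\rtimes_{\alpha,r}P$ and $1_\cH\otimes C^*_\lambda(P)\subseteq M(A\rtimes_{\alpha,r}P)$ finish it.

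Finally, for (iii): assuming $(3)$, the computation above gives $E_{P\cap X}\in C^*_\lambda(P)$ for every $X\in\JG$, and since $E_PE_{X_1}E_PE_{X_2}E_P=E_{P\cap X_1\cap X_2}$ these projections generate $E_P\DG E_P$, so $E_P\DG E_P\subseteq C^*_\lambda(P)$. Every element of $E_P\DG E_P$ is a diagonal operator on $\ell^2(P)$ and hence fixed by the canonical conditional expectation $\Phi\colon\cL(\ell^2(P))\to\ell^\infty(P)$ onto the diagonal, so it would suffice to know $\Phi(C^*_\lambda(P))=\DP$. The hard part — and the one genuinely non-formal ingredient — is precisely this: that the diagonal subalgebra of $C^*_\lambda(P)$ is $\DP$, which I would take from \cite{Li-nuc} (it comes from the fact that $C^*_\lambda(P)$ is spanned by partial isometries restricting to translation by some $g\in G$ on their domain, whose diagonal part is $0$ when $g\neq e$ and a projection onto a constructible ideal when $g=e$). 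Granting that, $E_P\DG E_P=\Phi(E_P\DG E_P)\subseteq\DP$. The remaining steps are routine bookkeeping, chiefly keeping track of which $1_\cH\otimes(\cdot)$ are honest elements versus only multipliers, together with the approximate-unit argument for fullness.
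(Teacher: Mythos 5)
Your proof is correct. Note that the paper itself gives no argument for this lemma: it is imported verbatim as ``the combination of \cite[Lemma 3.8]{Li-nuc} with \cite[Lemma 3.9]{Li-nuc}'', so what you have written is essentially a reconstruction of the proof from that reference rather than an alternative to anything in the present paper. All the key steps check out: the identities $E_P\lambda_gE_P=E_{P\cap g\cdot P}\lambda_g$ and $(E_P\lambda_gE_P)E_Y(E_P\lambda_gE_P)^*=E_{P\cap g\cdot Y}$ for $Y\subseteq P$; the observation that $\{X\subseteq G: X\subseteq h\cdot P\ \text{for some}\ h\}$ is left-invariant, intersection-closed and contains $\JP$, hence contains $\JG$, which together with the approximate-unit argument gives fullness; the compression formula identifying the generators of $A\rtimes_{\alpha,r}P$ inside the corner; and, for $(3)\Rightarrow(1)$, writing $X\in\JG$ as $\bigcap_j g_j\cdot Y_j$ with $Y_j\in\JP$ (again by a smallest-family argument) so that $E_{P\cap X}$ becomes a product of elements of $C_\lambda^*(P)$. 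You also correctly isolate the one genuinely non-formal input for the final implication, namely that the canonical conditional expectation onto the diagonal maps $C_\lambda^*(P)$ into $\DP$; your sketch of that fact (words in the $V_p,V_p^*$ act as translation by a fixed $g\in G$ on a constructible domain, with vanishing diagonal unless $g=e$, in which case the word equals $E_Y$ for some $Y\in\JP$) is the standard argument and is sound. The only caveat is bookkeeping you already flag yourself: $1_\cH\otimes C_\lambda^*(P)$ consists of multipliers of $A\rtimes_{\alpha,r}P$ (checked on the generating words), not of elements, and this is what lets you conclude $(3)\Rightarrow(1)$.
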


We now recall the definition of the Toeplitz condition from \cite{Li-nuc}:

\begin{definition}[{cf. \cite[Lemma 3.9 and Definition 4.1]{Li-nuc}}]\label{def-toeplitz} Let $P\subseteq G$ be a subsemigroup of the group $G$. We say that
\begin{enumerate}
\item $P\subseteq G$ satisfies the {\em Toeplitz condition} if for all $g\in G$ with $E_P\lambda_gE_P \neq 0$, there exist $p_i$,$q_i$ in $P$ such that $E_P\lambda_gE_P=V_{p_1}^*V_{q_1}\cdots V_{p_n}^*V_{q_n}$,
\item $P\subseteq G$ satisfies the {\em weak Toeplitz condition} if the equivalent conditions (1), (2) and (3) of Lemma \ref{lem-toeplitz} are satisfied, and
\item $P\subseteq G$ satisfies the {\em $K$-theoretic Toeplitz condition} if for every system $(A,\,G,\,\alpha)$ the inclusion map
$\iota: A\rtimes_{\alpha,r}P\to (1_{\cH} \otimes E_P)\big(A\rtimes_{\alpha,r}(P\subseteq G)\big)(1_{\cH}\otimes E_P)$ induces
an isomorphism of $K$-theory groups.
\end{enumerate}
\end{definition}

It is clear from condition (3) of Lemma \ref{lem-toeplitz} that the Toeplitz condition implies the weak Toeplitz condition
and it is clear from condition (1) of Lemma \ref{lem-toeplitz} that the weak Toeplitz condition implies the $K$-theoretic Toeplitz condition.
The following result of \cite{Li-nuc} turns out to be extremely useful

\begin{lemma}[{cf \cite[Lemma 4.2]{Li-nuc}}]\label{lem-strongtoeplitz}
Let $P\subseteq G$ such that {the} set $\JP$ of constructible right {ideals} in $P$ is independent in the sense of Definition
\ref{def-independentset} above and assume that $P\subseteq G$ satisfies the Toeplitz condition. Then the following are true:
\begin{enumerate}
\item The set ${\JG}$ of constructible right $P$-ideals in $G$ is independent.
\item For all $g\in G$ and $X\in \JP$ we have $g\cdot X\cap P\in \JP$.
\item $\JG=\{g\cdot X: g\in G, X\in  \JP\}$.
\end{enumerate}
\end{lemma}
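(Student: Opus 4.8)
The plan is to prove the three assertions in the order (2) $\Rightarrow$ (3) $\Rightarrow$ (1), after first disposing of the special case $X=P$ of (2). The only ingredients I will use are the defining properties of $\JP$ and $\JG$, the fact recorded above (from \cite{Li-am}) that $\JP$ is closed under finite intersections, the injectivity of the assignment $X\mapsto E_X$ on subsets of $P$, and the covariance-type identities $V_pV_p^*=E_{pP}$, $V_p^*E_XV_p=E_{p^{-1}X}$ and $V_qE_XV_q^*=E_{qX}$ for $X\subseteq P$.

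\emph{The case $X=P$ of (2).} I claim $g\cdot P\cap P\in\JP$ for every $g\in G$. If this set is empty it lies in $\JP$ by definition. Otherwise $E_P\lambda_gE_P\ne 0$, so the Toeplitz condition supplies $p_i,q_i\in P$ with $W:=V_{p_1}^*V_{q_1}\cdots V_{p_n}^*V_{q_n}=E_P\lambda_gE_P$. As an operator on $\ell^2(P)$, $E_P\lambda_gE_P$ is the partial isometry $\delta_q\mapsto\delta_{gq}$ (for $q\in P$ with $gq\in P$), so its range projection is $E_{g\cdot P\cap P}$. On the other hand, substituting $V_{q_n}V_{q_n}^*=E_{q_nP}$ into $WW^*$ and collapsing the resulting word from the inside out by repeated use of $V_qE_XV_q^*=E_{qX}$ and $V_p^*E_XV_p=E_{p^{-1}X}$ produces $WW^*=E_Z$ with $Z=p_1^{-1}(q_1(p_2^{-1}(\cdots(p_n^{-1}(q_nP))\cdots)))\in\JP$. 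Comparing the two descriptions of the range projection and using injectivity of $X\mapsto E_X$ gives $g\cdot P\cap P=Z\in\JP$. I expect this word-collapsing computation to be the one genuinely fiddly step: at each stage one must check that all sets involved already sit inside $P$, so that the intersections with $P$ are automatic. Everything afterwards is formal.

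\emph{Assertion (2) in general and assertion (3).} Since $\JP$ is the smallest family of subsets of $P$ containing $\emptyset$ and $P$ and closed under $X\mapsto pX$ and $X\mapsto p^{-1}X$, it suffices to prove the statement ``$g\cdot X\cap P\in\JP$ for all $g\in G$'' by induction along this construction. The cases $X=\emptyset$ and $X=P$ are done. If it holds for $X$, then $g\cdot(pX)\cap P=(gp)\cdot X\cap P\in\JP$ by the inductive hypothesis, and, writing $p^{-1}X=(p^{-1}\cdot X)\cap P$, we get $g\cdot(p^{-1}X)\cap P=((gp^{-1})\cdot X\cap P)\cap(g\cdot P\cap P)$, a finite intersection of members of $\JP$ (the inductive hypothesis together with the case $X=P$), hence in $\JP$. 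For (3), the inclusion $\{g\cdot X:g\in G,\,X\in\JP\}\subseteq\JG$ is immediate from left-translation invariance of $\JG$ and $\JP\subseteq\JG$. Conversely, $\{g\cdot X:g\in G,\,X\in\JP\}$ is left-translation invariant, contains $\JP$ (via $X=e\cdot X$), and — using (2) — is closed under finite intersections, since $g_1\cdot X_1\cap g_2\cdot X_2=g_1\cdot(X_1\cap((g_1^{-1}g_2)\cdot X_2\cap P))$ with $X_1\cap((g_1^{-1}g_2)\cdot X_2\cap P)\in\JP$ by (2) and closure of $\JP$ under intersections; minimality of $\JG$ then forces the reverse inclusion.

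\emph{Assertion (1).} Let $X,X_1,\dots,X_k\in\JG$ with $X=\bigcup_i X_i$. By (3) write $X=g\cdot Y$ and $X_i=g_i\cdot Y_i$ with $Y,Y_i\in\JP$. Applying $g^{-1}$ and intersecting with $P$ — harmless since $Y\subseteq P$ — yields $Y=\bigcup_i Z_i$, where $Z_i:=g^{-1}\cdot X_i\cap P=(g^{-1}g_i)\cdot Y_i\cap P\in\JP$ by (2). Independence of $\JP$ produces an index $i_0$ with $Z_{i_0}=Y$, i.e.\ $g^{-1}\cdot X=Y=g^{-1}\cdot X_{i_0}\cap P\subseteq g^{-1}\cdot X_{i_0}$, so $X\subseteq X_{i_0}$; since $X_{i_0}\subseteq X$ is automatic, $X_{i_0}=X$, which is exactly the independence of $\JG$. (Note that (2) and (3) use only the Toeplitz condition; independence of $\JP$ enters only here.)
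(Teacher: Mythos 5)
Your proof is correct. Note that the paper itself offers no argument for this lemma but simply quotes it from \cite[Lemma~4.2]{Li-nuc}; your proof is essentially the standard one from that reference: the Toeplitz condition is used exactly once, to identify the range projection $E_{g\cdot P\cap P}=WW^*$ of $W=E_P\lambda_gE_P$ with a constructible projection $E_Z$, $Z\in\JP$, after which (2) and (3) follow by structural induction and minimality, and independence of $\JP$ enters only in the final step (1).
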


Since the set $\JG$ of constructible right $P$-ideals in $G$ is closed under finite intersections, it follows that the
set of projections $\{E_X: X\in \JG\}$ is closed under multiplication. Moreover, since $\DG$ is generated by this set of projections,
it follows that $\{E_X: X\in \JG\}\smallsetminus\{0\}$ forms a regular basis for $\DG\cong C_0(\Om_{P\subseteq G})$ if and only if
$\JG$  is independent in the sense of Definition \ref{def-independentset} (which implies that $\{E_X: X\in \JG\}\smallsetminus\{0\}$
is independent in the sense of
Definition \ref{def-independentproj}).
Thus, if this is satisfied, we are  precisely in the situation of Theorem \ref{thm-BC} (which we may apply to the totally disconnected space $\Om_{P\subseteq G}$ and the regular basis $\cV=\{V_X: X\in \JG\}\smallsetminus\{\emptyset\}$ for the compact open sets of $\Om_{P\subseteq G}$, with $V_X=\widehat{E_X}^{-1}(\{1\})$ for $X\in \JG$ ).
As a consequence we get

\begin{theorem}\label{thm-semiK}
Let $\IG:=\JG\smallsetminus \{\emptyset\}$, let $\alpha:G\to\Aut(A)$ be an action of a countable group $G$ on
a separable C*-algebra $A$ and assume that the following conditions are satisfied for $P\subseteq G$ and $A$:
\begin{enumerate}
\item $P\subseteq G$ satisfies the $K$-theoretic Toeplitz condition;
\item The set $\JG$ of constructible right $P$-ideals in $G$ is independent;
\item $G$ satisfies the Baum-Connes conjecture with coefficients in $A\otimes C_0({\IG})$ and
$A\otimes {\DG}$.
\end{enumerate}
Then there is a canonical isomorphism
\begin{equation}\label{eq-iso1}
\bigoplus_{[X]\in G\backslash {\IG}} K_*(A\rtimes_{\alpha,r}G_X)\cong K_*(A\rtimes_{\alpha,r}P).
\end{equation}
\end{theorem}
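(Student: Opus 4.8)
The plan is to chain together the results already established in the paper. First I would use hypothesis (1), the $K$-theoretic Toeplitz condition, to replace the left-hand side of the eventual isomorphism's target: by Definition~\ref{def-toeplitz}(3) combined with Lemma~\ref{lem-toeplitz}, the inclusion $\iota : A\rtimes_{\alpha,r}P \to (1_\cH\otimes E_P)\big(A\rtimes_{\alpha,r}(P\subseteq G)\big)(1_\cH\otimes E_P)$ induces an isomorphism on $K$-theory. Moreover Lemma~\ref{lem-toeplitz} tells us the target is a \emph{full corner} of $A\rtimes_{\alpha,r}(P\subseteq G)$, which by Notation~\ref{not-reducedimage} is canonically isomorphic to $\big(A\otimes \DG\big)\rtimes_{\alpha\otimes\tau,r}G$. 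Since $K$-theory is Morita invariant, the inclusion of a full corner induces an isomorphism on $K$-theory, so we obtain
\[
K_*(A\rtimes_{\alpha,r}P)\cong K_*\big((A\otimes \DG)\rtimes_{\alpha\otimes\tau,r}G\big).
\]

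Next I would recognise the right-hand side as exactly the object treated in \S\ref{sec-K-theory}. By hypothesis (2), the set $\JG$ is independent; together with the fact (recorded just before the theorem) that $\{E_X : X\in\JG\}$ is closed under multiplication and generates $\DG$, this means $\cV = \{V_X : X\in\IG\}$ with $V_X = \widehat{E_X}^{-1}(\{1\})$ is a $G$-invariant regular basis for the compact open sets of $\Om_{P\subseteq G}$ (the $G$-invariance coming from $\tau_g(E_X)=E_{g\cdot X}$ and $G$-invariance of $\JG$). Thus we are precisely in the setting of Theorem~\ref{thm-BC} and Corollary~\ref{cor-BC}, with $\Om = \Om_{P\subseteq G}$ and index set $I = \IG$. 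Hypothesis (3) supplies exactly the Baum--Connes requirement needed in Theorem~\ref{thm-BC}. Applying Corollary~\ref{cor-BC} then yields the canonical isomorphism
\[
\bigoplus_{[X]\in G\backslash \IG} K_*(A\rtimes_{\alpha,r}G_X)\cong K_*\big((A\otimes C_0(\Om_{P\subseteq G}))\rtimes_{\alpha\otimes\tau,r}G\big),
\]
where $G_X$ is the stabiliser of $X\in\IG$ under the $G$-action on $\IG$, which here is just $\{g\in G : g\cdot X = X\}$.

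Combining the two displayed isomorphisms gives (\ref{eq-iso1}). The only genuinely delicate points are bookkeeping rather than deep: one must check that countability of $G$ and separability of $A$ guarantee that $\Om_{P\subseteq G}$ is second countable and $\IG$ countable (so Remark~\ref{rem-countable} and the standing hypotheses of \S\ref{sec-K-theory} apply); this follows since $\JG$ is a countable collection of subsets of the countable group $G$, hence by Lemma~\ref{lem-countable} the space $\Om_{P\subseteq G}$ is second countable. One should also confirm that the two separate uses of ``canonical isomorphism'' compose to something one is willing to call canonical --- the first is the composite of $\iota_*$ with the full-corner/Morita map and the identification of Notation~\ref{not-reducedimage}, and the second is the map $\bigoplus(\pi_X)_*$ of Lemma~\ref{lem-KK}; tracking through these identifications is the main (but routine) obstacle. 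I expect the proof to be short: essentially ``apply (1) and Lemma~\ref{lem-toeplitz} to pass to the crossed product by $G$, then apply Corollary~\ref{cor-BC}.''
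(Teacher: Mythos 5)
Your proposal is correct and follows essentially the same route as the paper's own proof: apply Corollary~\ref{cor-BC} (using hypotheses (2) and (3)) to identify $\bigoplus_{[X]} K_*(A\rtimes_{\alpha,r}G_X)$ with $K_*\big((A\otimes\DG)\rtimes_{\alpha\otimes\tau,r}G\big)$, and then use the $K$-theoretic Toeplitz condition together with fullness of the corner $1_\cH\otimes E_P$ to identify the latter with $K_*(A\rtimes_{\alpha,r}P)$. The extra remarks on countability and on composing the canonical identifications are consistent with the paper's standing assumptions and add nothing problematic.
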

\begin{proof} Conditions (2) and (3) imply that Corollary \ref{cor-BC} applies to the regular basis of projections
$\{E_X: X\in {\IG}\}$ and to the commutative C*-algebra ${\DG}\cong C_0(\Om_{P\subseteq G})$ generated by this set.
Thus the corollary gives a canonical isomorphism
\begin{equation*}
\bigoplus_{[X]\in G\backslash {\IG}} K_*(A\rtimes_{\alpha,r}G_X)\cong
K_*\big((A\otimes {\DG})\rtimes_{\alpha\otimes\tau, r}G\big)\cong K_*(A\rtimes_{\alpha,r}(P\subseteq G)),
\end{equation*}
where the second isomorphism follows from
Lemma \ref{lem-repcrossed}. Since $P\subseteq G$ satisfies the $K$-theoretic Toeplitz condition, we further have
$$K_*(A\rtimes_{\alpha,r}P)\cong K_*\big((1_\cH\otimes E_P)(A\rtimes_{\alpha,r}(P\subseteq G))(1_{\cH}\otimes E_P)\big)
\cong K_*\big(A\rtimes_{\alpha, r}(P\subseteq G)\big),$$
where the second isomorphism follows from the fact that $1_\cH\otimes E_P$ is a full projection
in $M(A\rtimes_{\alpha,r}(P\subseteq G))$. \end{proof}

\begin{remark}\label{rem-semiK} Using  Lemma \ref{lem-strongtoeplitz} we see that conditions (1) and (2) in Theorem \ref{thm-semiK} can be replaced by the following (stronger) conditions
\begin{enumerate}
\item[(1')] $P\subseteq G$ satisfies the Toeplitz condition, and
\item[(2')] the set $\JP$ of constructible right ideals in $P$ is independent.
\end{enumerate}
It is often easier to check these conditions rather than conditions (1) and (2) of the theorem.

We should also remark that if $\JG$ is independent and $P\subseteq G$ satisfies the Toeplitz condition, then
$(A\otimes D_{P \subseteq G})\rtimes_{\alpha\otimes\tau,r}G\cong A\rtimes_{\alpha,r}(P\subseteq G)$ is Morita equivalent, and hence
$KK$-equivalent to $A\rtimes_{\alpha,r}P$. Thus if $G$ satisfies the strong Baum-Connes conjecture and {if $A\rtimes H$ lies in the bootstrap class for every finite subgroup $H$ of $G$ which stabilizes some ideal $X\in \IG$ or if}  $G$ satisfies the strong Baum-Connes conjecture and has no non-trivial finite subgroups, then we even get a $KK$-equivalence
$$\bigoplus_{[X]\in G\backslash {\IG}} A\rtimes_{\alpha,r}G_X \sim_{KK} A\rtimes_{\alpha,r}P.$$
\end{remark}

In case of trivial coefficients $A=\CC$, we obtain the following
\begin{corollary}\label{cor-group} Assume that $P\subseteq G$ satisfies  conditions (1), (2) and (3) of Theorem \ref{thm-semiK} for $A=\CC$.
Then there is a canonical isomorphism
$$\bigoplus_{[X]\in G\backslash {\IG}} K_*(C_r^*(G_X))\cong K_*(C_{{\lambda}}^*(P)).$$
If, moreover, $G$ satisfies the strong Baum-Connes conjecture,  this isomorphism is induced by a $KK$-equivalence.
\end{corollary}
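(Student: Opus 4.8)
The plan is to read off the corollary as the special case $A=\CC$, $\alpha=\id$ of Theorem~\ref{thm-semiK}, once the crossed products occurring there are identified with the group and semigroup C*-algebras in the statement. Note first that hypothesis~(3) of Theorem~\ref{thm-semiK} for $A=\CC$ is precisely the assumption that $G$ satisfies the Baum-Connes conjecture with coefficients in $C_0(\IG)$ and in $\DG$, so all three hypotheses of that theorem are in force.

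The key step is then to unwind the definitions (\ref{eq-tildealpha})--(\ref{eq-crossed}) when $A=\CC$ carries the trivial $G$-action: here $\widetilde{\id_P}$ acts as the identity on the $\CC$-factor, so $\CC\rtimes_{\id,r}P=C^*(\{1_\CC\otimes V_p:p\in P\})$, which under the identification $\CC\otimes\ell^2(P)\cong\ell^2(P)$ is exactly the C*-algebra generated by the left regular isometries, i.e.\ $C_{\lambda}^*(P)$. The same computation applied to the (honest) subgroups $G_X\subseteq G$, with the trivial action restricting to the trivial action, gives $\CC\rtimes_{\id,r}G_X=C_r^*(G_X)$. Substituting these identifications into the isomorphism (\ref{eq-iso1}) of Theorem~\ref{thm-semiK} yields the canonical isomorphism $\bigoplus_{[X]\in G\backslash\IG}K_*(C_r^*(G_X))\cong K_*(C_{\lambda}^*(P))$.

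For the last assertion I would invoke Remark~\ref{rem-semiK}: when $G$ satisfies the strong Baum-Connes conjecture, the above isomorphism is induced by a $KK$-equivalence $\bigoplus_{[X]\in G\backslash\IG}\CC\rtimes_{\id,r}G_X\sim_{KK}\CC\rtimes_{\id,r}P$, provided $\CC\rtimes H$ lies in the bootstrap class for every finite subgroup $H\subseteq G$ stabilizing some $X\in\IG$. But $\CC\rtimes H=C^*(H)$ is finite dimensional, hence type~I, hence in the bootstrap class, for any finite group $H$; so this requirement is automatically satisfied and the $KK$-equivalence follows with no further hypothesis.

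There is essentially no genuine obstacle: the corollary is a formal specialization of Theorem~\ref{thm-semiK} together with Remark~\ref{rem-semiK}. The only point meriting a line of justification rather than being purely routine is the identification $\CC\rtimes_{\id,r}P\cong C_{\lambda}^*(P)$ — recognizing the abstractly defined reduced semigroup crossed product of $\CC$ by the trivial $P$-action as the left regular semigroup C*-algebra — and this is immediate once the defining representation is written out.
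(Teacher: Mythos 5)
Your proposal is correct and matches the paper, which presents this corollary as the immediate specialization $A=\CC$, $\alpha=\id$ of Theorem~\ref{thm-semiK} together with Remark~\ref{rem-semiK}; your identifications $\CC\rtimes_{\id,r}P\cong C_\lambda^*(P)$ and $\CC\rtimes_{\id,r}G_X\cong C_r^*(G_X)$, and the observation that $C^*(H)$ is finite dimensional and hence in the bootstrap class for every finite subgroup $H$, are exactly the points needed to make that specialization explicit.
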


\begin{remark}\label{rem-Ore}
 Recall that a semigroup $P$ satisfies the left Ore
condition if and only if it can be imbedded as a subsemigroup of a
group $G$ such that $G=P^{-1}P$.
It follows directly from this
condition that the inclusion $P\subseteq G$ satisfies the
Toeplitz condition. Therefore, if the set $\JP$ of constructible
right deals in $P$ is independent, the same holds for $\JG$ by Lemma
\ref{lem-strongtoeplitz}. Thus, if in addition $G$ satisfies the
Baum-Connes conjecture for suitable coefficients, the results of the
previous section will apply.

This was the situation studied in \cite{CEL} in which we gave a proof of the
above corollary  in this situation together with a large number of interesting applications.
The results obtained here also  allow the study of crossed products by left Ore semigroups
by automorphic actions.
\end{remark}

Interesting examples of left Ore semigroups
 are given by semigroups attached to Dedekind
domains $R$. Let $R^\times:=R\setminus\{0\}$ be its multiplicative semigroup and  let  $R^*$ denote the group of units in $R$. Consider the semigroups $R^\times$, $R^\times/R^*$ and $R\rtimes R^\times$ as studied in detail in \cite{CEL}.
Let $Q(R)$ denote the quotient field of $R$ and  let $Cl_{Q(R)}$ denote the {\em ideal class group} of $Q(R)$. For each $\gamma\in  Cl_{Q(R)}$ we let $I_\gamma\subseteq Q(R)$ be a representative for $\gamma$
(see \cite[\S 8]{CEL} for a more detailed discussion).

\begin{theorem}\label{thm-Dedekind}
 Let $R$ be a Dedekind domain. Then the following are true:
 \begin{enumerate}
 \item For every action $\alpha:R^\times \to \Aut(A)$ there is a canonical isomorphism
 $$ K_*(A\rtimes_{\alpha,r} R^\times)\cong \bigoplus_{\gamma\in Cl_{Q(R)}}K_*(A\rtimes_{\alpha, r} R^*).$$
 \item For every action $\alpha: R^\times/R^*\to \Aut(A)$  there is a canonical isomorphism
 $$ K_*(A\rtimes_{\alpha,r} (R^\times/R^*))\cong \bigoplus_{\gamma\in Cl_{Q(R)}}K_*(A).$$
 \item For every action $\alpha:R\rtimes R^\times\to \Aut(A)$ there is a canonical isomorphism
 $$ K_*\big(A\rtimes_{\alpha,r}(R\rtimes R^\times)\big)\cong \bigoplus_{\gamma\in  Cl_{Q(R)}}K_*\big(A\rtimes_{\alpha,r}(I_\gamma\rtimes R^*)\big).$$
 \end{enumerate}
 \end{theorem}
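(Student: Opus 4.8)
The plan is to recognise each of the three semigroups as a left Ore semigroup and then apply Theorem~\ref{thm-semiK}, in the form provided by Remark~\ref{rem-semiK}; the only substantive work is to read off the index set $\IG$ together with the $G$-action on it, and this is essentially the content of \cite[\S 8]{CEL}.

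Write $K = Q(R)$. The multiplicative semigroup $R^\times$ is left Ore with enveloping group $K^* = (R^\times)^{-1}R^\times$; the semigroup of nonzero principal ideals $R^\times/R^*$ is left Ore with enveloping group $K^*/R^*$; and the $ax+b$-semigroup $R\rtimes R^\times$ is left Ore with enveloping group $K\rtimes K^*$ (see \cite{CEL}). In each case the left Ore condition forces the Toeplitz condition (Remark~\ref{rem-Ore}), so hypothesis (1') of Remark~\ref{rem-semiK} holds. Moreover the set $\JP$ of constructible right ideals is independent in all three cases; this is proved in \cite{CEL} and rests on unique factorisation of ideals in the Dedekind domain $R$. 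Hence hypothesis (2') holds as well, and by Remark~\ref{rem-semiK} conditions (1) and (2) of Theorem~\ref{thm-semiK} are satisfied. Finally, $K^*$ and $K^*/R^*$ are abelian and $K\rtimes K^*$ is metabelian, so all three enveloping groups are amenable and therefore satisfy the Baum-Connes conjecture with arbitrary coefficients by \cite{HK}; this gives condition (3) of Theorem~\ref{thm-semiK} for every coefficient algebra $A$. (We assume here that $R$ is countable and $A$ separable, as in Theorem~\ref{thm-semiK}; this in particular covers rings of integers in number fields.) Thus Theorem~\ref{thm-semiK}, together with the isomorphism \eqref{eq-iso1}, applies in all three situations, and it remains only to compute $G\backslash\IG$ and the stabilisers $G_X$.

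For $P = R^\times$ and $G = K^*$, the description in \cite{CEL} identifies $\IG$ $G$-equivariantly with the set of nonzero fractional ideals of $R$, the action of $g\in K^*$ being $M\mapsto gM$. Two fractional ideals lie in one orbit precisely when they represent the same element of $Cl_{Q(R)}$, so $G\backslash\IG\cong Cl_{Q(R)}$; and since every fractional ideal of $R$ is invertible, $gM = M$ forces $gR = R$, i.e. $g\in R^*$, whence $G_M = R^*$ for every $M$. Substituting into \eqref{eq-iso1} gives~(1). For $P = R^\times/R^*$ and $G = K^*/R^*$ the set $\IG$ is again that of nonzero fractional ideals, now with $gR^*$ acting by $M\mapsto gM$; the orbit space is once more $Cl_{Q(R)}$, while the stabiliser of $M$ is $\{gR^*: g\in R^*\} = \{e\}$. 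Since $A\rtimes_{\alpha,r}\{e\} = A$, formula \eqref{eq-iso1} now yields~(2). For $P = R\rtimes R^\times$ and $G = K\rtimes K^*$, the constructible right $P$-ideals in $G$ are described in \cite{CEL} as the ``fractional affine'' sets $X_{M,s} := (s+M)\times(M\setminus\{0\})$ with $M$ a nonzero fractional ideal and $s\in K$ (so $X_{M,s}$ depends on $s$ only modulo $M$), and $(t,g)\in K\rtimes K^*$ acts by $X_{M,s}\mapsto X_{gM,\,gs+t}$. Again $X_{M,s}$ and $X_{M',s'}$ lie in one orbit exactly when $M$ and $M'$ lie in the same ideal class, so $G\backslash\IG\cong Cl_{Q(R)}$, with representatives $X_{I_\gamma,0}$ for the chosen fractional ideals $I_\gamma$. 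The stabiliser of $X_{I_\gamma,0}$ consists of those $(t,g)$ with $gI_\gamma = I_\gamma$ and $t\in I_\gamma$, i.e. $G_{X_{I_\gamma,0}} = I_\gamma\rtimes R^*$ (using invertibility of $I_\gamma$ to pass from $gI_\gamma = I_\gamma$ to $g\in R^*$); here $\alpha$ is understood to act on $A$ via its canonical dilation \cite{Laca} to $G$, equivalently one may take all $I_\gamma$ integral, so that $I_\gamma\rtimes R^*$ is literally a subgroup of $R\rtimes R^\times$. Substituting into \eqref{eq-iso1} gives~(3).

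The essential content is thus the ideal-theoretic bookkeeping producing these orbit and stabiliser data, and this is already available in \cite{CEL}, where the special case of trivial coefficients $A = \CC$ was treated. The one genuinely new point needed for arbitrary automorphic coefficients is hypothesis (3) of Theorem~\ref{thm-semiK} for general $A$, which we obtained above from amenability of the enveloping groups; so there is no serious obstacle beyond the careful transcription of the description of $\IG$ and of the $G$-action from \cite{CEL} and the verification that the stabilisers are $R^*$, $\{e\}$, and $I_\gamma\rtimes R^*$, respectively.
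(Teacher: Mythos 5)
Your proof is correct and follows the same route as the paper: the paper simply states that all the computations needed to deduce the theorem from Theorem~\ref{thm-semiK} are carried out in \cite[\S 8]{CEL} and notes that the enveloping groups are amenable, so Baum--Connes holds with arbitrary coefficients. You have merely written out explicitly the orbit and stabiliser bookkeeping (identification of $\IG$ with fractional ideals, orbits with $Cl_{Q(R)}$, stabilisers $R^*$, $\{e\}$, $I_\gamma\rtimes R^*$) that the paper delegates to \cite{CEL}.
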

All  computations necessary for deducing this theorem from Theorem \ref{thm-semiK}  have been done in \cite[\S 8]{CEL}.
Note that in all cases of the above theorem, the enveloping groups are
amenable, hence satisfy the strong Baum-Connes conjecture. Thus whenever
$A$ is type I, the isomorphisms in the above theorem are induced by
$KK$-equivalences.

\section{The case of principal constructible ideals and quasi-lattice ordered groups}\label{sec-quasi-lattices}
In this section we discuss a situation which is particularly nice for our purposes. Assume that $P$ is a subsemigroup of a group $G$ such that all constructible right $P$-ideals in $G$ are principal, i.e. $\JG = \left\{ g \cdot P \text{: } g \in G \right\} \cup \left\{ \emptyset \right\}$. As observed in \cite[\S~8.1]{Li-nuc},it follows that $P \subseteq G$ is Toeplitz. Moreover, another consequence is that $\JP= \left\{ pP \text{: } p \in P \right\} \cup \left\{ \emptyset \right\}$ so that $\JP$ is clearly independent. Conversely, if all constructible ideals of $P$ are principal, i.e. if $\JP = \left\{ pP \text{: } p \in P \right\} \cup \left\{ \emptyset \right\}$, and if $P \subseteq G$ is Toeplitz, then $\JG = \left\{ g \cdot P \text{: } g \in G \right\} \cup \left\{ \emptyset \right\}$. This is a consequence of \cite[Lemma~4.2]{Li-nuc}. Therefore, we may apply our general K-theoretic result to this situation. Since the stabilizer $G_P$ at $P\in \IG$ is equal to the group $P^*$ of invertible elements in $P$, we see that the left hand side of the isomorphism (\ref{eq-iso1}) equals $K_*(A\rtimes_{\alpha,r}P^*)$.

Recall from (\ref{eq-crossed}) the construction of the crossed product $A\rtimes_{\alpha,r}P$.
Let $\iota_A=\widetilde{\alpha_P}:A\to A\rtimes_{\alpha,r}P$ be as in (\ref{eq-tildealpha}) and let
$\iota_{P^*}:P^*\to \U(\ell^2(P))$ given by $\iota_{P^*}(p)=V_p$, where we recall
that for all $p\in P$ we have $V_p\epsilon_x=\epsilon_{px}$, $x\in P$. Then $(\iota_A,\iota_{P^*})$ is covariant
for $(A,P^*,\alpha)$ and we
 obtain a corresponding homomorphism $\iota_A\rtimes\iota_{P^*}:A\rtimes_{\alpha}P^*\to A\rtimes_{\alpha,r}P\subseteq \cB(\cH\otimes \ell^2(P))$.
Now if we decompose $\ell^2(P)$ into the direct sum
$\bigoplus_{[x]\in P^*\backslash P} \ell^2(P^*x)$, and if we identify
 $\ell^2(P^*)$ with $\ell^2(P^*x)$ via the bijection $p\mapsto px$,
 we may check  that  $\iota_A\rtimes\iota_{P^*}$ decomposes into
a multiple of the left regular representation of $A\rtimes_{\alpha,r}P^*$ on the Hilbert space
 $\cB(\cH\otimes \ell^2(P^*))$. Thus $\iota_A\rtimes \iota_{P^*}$ factors through a faithful $*$-homomorphism
 \begin{equation}\label{eq-faithful}
 \iota_{A\rtimes_{r}P^*}:A\rtimes_{\alpha,r}P^*\hookrightarrow  A\rtimes_{\alpha,r}P.
 \end{equation}

\begin{theorem}\label{principal-ideals}
Suppose that $\JG = \left\{ g \cdot P \text{: } g \in G \right\} \cup \left\{ \emptyset \right\}$. Let $G$ act on a C*-algebra $A$ by $\alpha$ such that $G$ satisfies the Baum-Connes conjecture for $A \otimes \DG$ with respect to the diagonal action and that the group of invertible elements $P^*$ in $P$ satisfies the Baum-Connes conjecture for $A$. Then the homomorphism $\iota_{A\rtimes_rP^*}:A\rtimes_{\alpha,r}P^*\to A\rtimes_{\alpha,r}P$ induces an isomorphism
in $K$-theory
$$K_*(A \rtimes_{\alpha,r} P^*)\cong  K_*(A\rtimes_{\alpha,r}P).$$
If, moreover,  $A\rtimes_{\alpha,r}H$ is in the bootstrap class for every finite subgroup of $P^*$ and if
$G$ satisfies the strong Baum-Connes conjecture, then
$\iota_{A\rtimes_r P^*}$ is a $KK$-equivalence.
\end{theorem}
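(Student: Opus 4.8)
The plan is to recognize the situation as the one-orbit case of \thmref{thm-semiK}, to identify the abstract isomorphism it produces with the map induced by $\iota_{A\rtimes_r P^*}$ via \lemref{lem-KK} and \lemref{lem-repcrossed}, and then to upgrade to a $KK$-equivalence using the second part of \thmref{thm-BC}.

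\textbf{Step 1 (reduction to \thmref{thm-semiK}).} As recalled just before the statement, the hypothesis $\JG=\{g\cdot P:g\in G\}\cup\{\emptyset\}$ forces $P\subseteq G$ to be Toeplitz, hence to satisfy the $K$-theoretic Toeplitz condition, and forces $\JP=\{pP:p\in P\}\cup\{\emptyset\}$. The family $\JG$ is independent: if $g_0P=\bigcup_{i=1}^k g_iP$ with all $g_iP\neq\emptyset$, then $g_0\in g_{i_0}P$ for some $i_0$, whence $g_0P\subseteq g_{i_0}P\subseteq g_0P$ and $g_0P=g_{i_0}P$. Moreover $\IG=\JG\smallsetminus\{\emptyset\}$ is a \emph{single} $G$-orbit, $G\cdot P\cong G/P^*$, with stabilizer $G_P=P^*$. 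Hence, by \remref{rem-BC}, $G$ satisfies the Baum--Connes conjecture with coefficients in $A\otimes C_0(\IG)$ if and only if $P^*$ satisfies it with coefficients in $A$, which is assumed; the remaining hypothesis, Baum--Connes for $A\otimes\DG$, is assumed directly. So all conditions of \thmref{thm-semiK} hold, and since $G\backslash\IG$ is a single point this gives an isomorphism $K_*(A\rtimes_{\alpha,r}P^*)\cong K_*(A\rtimes_{\alpha,r}P)$.

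\textbf{Step 2 (identifying the isomorphism with $(\iota_{A\rtimes_r P^*})_*$).} Apply \lemref{lem-KK} to $\Om=\Om_{P\subseteq G}$, the regular basis $\{E_X:X\in\IG\}$, and the unique orbit $i=P$. In the one-orbit case the inclusion $j_P$ of the unique summand is the identity, so $[\pi_P]=y_I\otimes j_G([\id_A]\otimes_\CC x)$ in $KK_0\big(A\rtimes_{\alpha,r}P^*,(A\otimes\DG)\rtimes_{\alpha\otimes\tau,r}G\big)$, where $\pi_P(au_g)=(a\otimes E_P)u_g$; since $y_I$ of \corref{cor-BC} is a $KK$-equivalence and $j_G([\id_A]\otimes_\CC x)$ induces an isomorphism on $K$-theory by \thmref{thm-BC}, the map $(\pi_P)_*$ is an isomorphism. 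Now compose with the isomorphism $\Phi=\pi\rtimes(1_{\cH}\otimes\lambda_G)\colon(A\otimes\DG)\rtimes_{\alpha\otimes\tau,r}G\xrightarrow{\cong}A\rtimes_{\alpha,r}(P\subseteq G)$ of \lemref{lem-repcrossed}: then $\Phi(\pi_P(au_g))=\pi(a\otimes E_P)(1_{\cH}\otimes\lambda_g)$. Because $g\in P^*$ is a two-sided unit of $P$ we have $gP=P$, so $1_{\cH}\otimes\lambda_g$ preserves $\cH\otimes\ell^2(P)$ and $\pi(a\otimes E_P)(1_{\cH}\otimes\lambda_g)$ lies in the full corner $(1_{\cH}\otimes E_P)\big(A\rtimes_{\alpha,r}(P\subseteq G)\big)(1_{\cH}\otimes E_P)$, which by \lemref{lem-toeplitz}(1) (valid since $P\subseteq G$ is Toeplitz) equals $A\rtimes_{\alpha,r}P$, and there it acts as $\widetilde{\alpha_P}(a)(1_{\cH}\otimes V_g)=(\iota_A\rtimes\iota_{P^*})(au_g)$. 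Thus, on the generators $au_g$ and hence everywhere, $\Phi\circ\pi_P=c\circ\iota_{A\rtimes_r P^*}$, where $c\colon A\rtimes_{\alpha,r}P\hookrightarrow A\rtimes_{\alpha,r}(P\subseteq G)$ is the full-corner inclusion. Passing to $K$-theory and using that $\Phi_*$ is an isomorphism and that $c_*$ is an isomorphism (because $1_{\cH}\otimes E_P$ is a full projection), we get $(\iota_{A\rtimes_r P^*})_*=c_*^{-1}\circ\Phi_*\circ(\pi_P)_*$, an isomorphism.

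\textbf{Step 3 ($KK$-equivalence).} Assume in addition that $G$ satisfies the strong Baum--Connes conjecture and that $A\rtimes_{\alpha,r}H$ lies in the bootstrap class for every finite subgroup $H$ of $P^*$; up to conjugacy these are exactly the finite subgroups of $G$ contained in a stabilizer of the single orbit, so by \thmref{thm-BC} (equivalently \remref{rem-semiK}) the class $j_G([\id_A]\otimes_\CC x)$ is a $KK$-equivalence. Combined with the $KK$-equivalence $y_I$, the identity $[\pi_P]=y_I\otimes j_G([\id_A]\otimes_\CC x)$ of \lemref{lem-KK} shows that $[\pi_P]$ is a $KK$-equivalence. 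Since $[\Phi]$ and $[c]$ (a full corner, hence a Morita equivalence) are $KK$-equivalences and $\Phi\circ\pi_P=c\circ\iota_{A\rtimes_r P^*}$, it follows that $[\iota_{A\rtimes_r P^*}]$ is a $KK$-equivalence.

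\textbf{Main obstacle.} Steps~1 and~3 are essentially bookkeeping on top of \secref{sec-K-theory}. The delicate point is Step~2: matching the abstract cycle $\pi_P$, built from the projection $a\otimes E_P$ and the regular representation $\pi\rtimes(1_{\cH}\otimes\lambda_G)$ of $\DG\rtimes_\tau G$, with the concrete isometric model of $A\rtimes_{\alpha,r}P$ from \eqref{eq-tildealpha}--\eqref{eq-crossed}. The key is precisely that $P^*$ consists of \emph{two-sided} units, so $gP=P$ for $g\in P^*$; this is what forces $\pi(a\otimes E_P)(1_{\cH}\otimes\lambda_g)$ to restrict on $\cH\otimes\ell^2(P)$ to $\widetilde{\alpha_P}(a)(1_{\cH}\otimes V_g)$ with no truncation, so that the two descriptions agree on generators and hence everywhere.
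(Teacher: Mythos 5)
Your proposal is correct and follows essentially the same route as the paper: reduce to Theorem \ref{thm-semiK} in the single-orbit case with stabilizer $G_P=P^*$, and then use Lemma \ref{lem-KK} together with the explicit computation of $\pi(a\otimes E_P)(1_{\cH}\otimes\lambda_g)$ on $\cH\otimes\ell^2(G)$ (where $gP=P$ for $g\in P^*$ is the decisive point) to show that the square relating $\pi_P$, $\pi\rtimes(1_{\cH}\otimes\lambda)$, the corner embedding, and $\iota_{A\rtimes_r P^*}$ commutes. The $KK$-equivalence upgrade via Theorem \ref{thm-BC}/Remark \ref{rem-semiK} also matches the paper's argument.
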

\begin{proof}
Note first that $g\mapsto g \cdot P$ induces a bijection $G/P^*\cong \IG$ and hence it follows from
\cite[Theorem 2.6]{CE}  that $G$ satisfies the Baum-Connes conjecture for $A\otimes C_0(\IG)$ if and only if $P^*$ satisfies the conjecture for $A$. It follows  that the conditions of Theorem \ref{thm-semiK} are satisfied and
that the left hand side of the isomorphism (\ref{eq-iso1}) equals $K_*(A\rtimes_{\alpha,r}P^*)$.
Hence Theorem \ref{thm-semiK} implies the desired result as soon as we have checked that the resulting  isomorphism
(or $KK$-equivalence)
$$\Phi: K_*(A\rtimes_{\alpha,r}P^*)\stackrel{\cong}{\longrightarrow} K_*(A\rtimes_{\alpha,r}P)$$
 is implemented by the inclusion  $\iota_{A\rtimes_rP^*}:A\rtimes_{\alpha,r}P^*\to A\rtimes_{\alpha,r}P$.

For this recall that by Lemma \ref{lem-KK} the isomorphism
$\Phi$ is obtained by the $KK$-class $[\pi_P] {\otimes} [\mu]^{-1}\in KK_0(A\rtimes_{\alpha,r}P^*, A\rtimes_{\alpha,r}P)$
 with
$\pi_P: A\rtimes_{\alpha,r}P^*\to (A\otimes \DG)\rtimes_{\alpha\otimes\tau,r}G$ given by
$\pi_P(au_g)=(a\otimes E_P)u_g$
and where
$\mu: A\rtimes_{\alpha,r}P\to A\rtimes_{\alpha,r}(P\subseteq G)\cong (A\otimes\DG)\rtimes_{\alpha\otimes\tau,r}G$
denotes the realization of $A\rtimes_{\alpha,r}P$ as the full corner
$$(1_{\cH}\otimes E_P)\big(A\rtimes_{\alpha,r}(P\subseteq G)\big) (1_{\cH}\otimes E_P)
\subseteq A\rtimes_{\alpha,r}(P\subseteq G)\cong
(A\otimes \DG)\rtimes_{\alpha\otimes\tau,r}G.$$
Consider the diagram
$$
\begin{CD}
A\rtimes_{\alpha,r}P^* @>\pi_P>>(A\otimes \DG)\rtimes_{\alpha\otimes\tau,r}G\\
@V\iota_{A\rtimes_{\alpha,r}P^*} VV    @V\cong V  \pi\rtimes(1\otimes\lambda) V\\
A\rtimes_{\alpha,r}P   @>>\mu >  A\rtimes_{\alpha,r}(P\subseteq G)
\end{CD}
$$
where the isomorphism $\pi\rtimes(1\otimes\lambda)$ in the right vertical arrow is described in Lemma \ref{lem-repcrossed}. We need to show that this diagram commutes.
Following the definitions  we see that
$$\iota_{A\rtimes_{\alpha,r}P^*}(au_g)=\widetilde{\alpha_P}(a)(1\otimes V_g)
\in \B(\cH\otimes \ell^2(P))$$
with notations as in (\ref{eq-tildealpha}) and (\ref{eq-crossed}). On the other side we
have
$$\pi\rtimes(1_{\cH}\otimes\lambda)\big(\pi_P(a u_g)\big)=
\pi\rtimes(1_{\cH}\otimes\lambda)\big(a\otimes E_P)u_g\big)
=\pi(a\otimes E_P)(1_{\cH}\otimes\lambda_g).$$
By Lemma \ref{lem-repcrossed} we get for  $\xi\in \cH$ and $\epsilon_x\in \ell^2(G)$:
\bglnoz
\pi(a\otimes E_P)(1_{\cH}\otimes\lambda_g)(\xi\otimes \epsilon_x)&=&\alpha_{(gx)^{-1}}(a)\xi\otimes E_P\epsilon_{gx} \\
&=&\left\{\begin{matrix} 0& \text{if $x\notin P$}\\
\widetilde{\alpha_P}(a)(1_{\cH}\otimes V_g)(\xi\otimes \epsilon_x)&\text{if $x\in P$}\end{matrix}\right.
\eglnoz
which gives the desired result.
\end{proof}

As a special case, we can treat quasi-lattice ordered groups. Recall from \cite{Ni1} that $P\subseteq G$ is called {\em quasi-lattice ordered} if the following conditions are satisfied:
\begin{itemize}
\item[{(QL0)}] $P\cap P^{-1}=\{e\}$;
\item[{(QL1)}] for all $g\in G$ the intersection $P\cap (g\cdot P)$ is either empty or of the form $pP$ for some $p\in P$.
\end{itemize}
Condition (QL2) from \cite{Ni1} is automatically satisfied as was observed in \cite{Cr-La}. (QL1) implies that $\JG = \left\{ g \cdot P \text{: } g \in G \right\} \cup \left\{ \emptyset \right\}$.
So we are in the situation that all constructible right $P$-ideals in $G$ are principal. 
 {The Toeplitz condition is shown in \cite[\S 8.1]{Li-nuc}. Hence, since  (QL0) implies  $P^*=\{1\}$ we obtain from Theorem \ref{principal-ideals}: }

\begin{theorem}\label{thm-quasi-lattice}
Suppose that $P\subseteq G$ is quasi-lattice ordered as defined
above. Let $\alpha: G \to \Aut(A)$ be a $G$-action on a C*-algebra
$A$ such that $G$ satisfies the Baum-Connes conjecture for $A\otimes
\DG$ with respect to the diagonal action. Then the canonical
inclusion $\iota_A:A\hookrightarrow A\rtimes_{\alpha,r}P$ induces an
isomorphism
$$K_*(A)\cong  K_*(A\rtimes_{\alpha,r}P).$$
If, moreover, $G$ satisfies the strong Baum-Connes conjecture and if $G$ is torsion-free or $A$ lies in the bootstrap class, then $\iota_A$ is a  $KK$-equivalence.
\end{theorem}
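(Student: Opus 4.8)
The plan is to obtain this theorem as the special case $P^*=\{e\}$ of Theorem~\ref{principal-ideals}, so the main task is to verify that a quasi-lattice ordered inclusion $P\subseteq G$ meets the hypotheses of that result. First I would check that all constructible right $P$-ideals in $G$ are principal: condition (QL1) says precisely that $P\cap(g\cdot P)$ is empty or of the form $pP$, and since in that case $\JP=\{pP:p\in P\}\cup\{\emptyset\}$ while $\JG$ is by definition the smallest left-translation-invariant family containing $\JP$ and closed under finite intersections, closing $\JP$ first under left translation gives $\{g\cdot P:g\in G\}\cup\{\emptyset\}$, and (QL1) together with translation shows this family is already closed under finite intersections; hence $\JG=\{g\cdot P:g\in G\}\cup\{\emptyset\}$. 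Next, the Toeplitz condition for $P\subseteq G$ in this situation is \cite[\S 8.1]{Li-nuc}. Finally (QL0) forces $P\cap P^{-1}=\{e\}$, and since the group $P^*$ of invertible elements of $P$ equals $P\cap P^{-1}$, we get $P^*=\{e\}$.

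With $P^*=\{e\}$ one has $A\rtimes_{\alpha,r}P^*=A$, and the Baum--Connes conjecture for $A$ relative to the trivial group holds trivially, so the only remaining standing hypothesis of Theorem~\ref{principal-ideals} is that $G$ satisfy Baum--Connes for $A\otimes\DG$, which is assumed. Applying Theorem~\ref{principal-ideals} then produces an isomorphism $K_*(A)\cong K_*(A\rtimes_{\alpha,r}P)$ implemented by the homomorphism $\iota_{A\rtimes_r P^*}$ of \eqref{eq-faithful}. I then need the short observation that for $P^*=\{e\}$ this homomorphism is exactly the canonical inclusion $\iota_A$: the covariant pair $(\iota_A,\iota_{P^*})$ used in \eqref{eq-faithful} has trivial second component, so $\iota_A\rtimes\iota_{P^*}=\widetilde{\alpha_P}=\iota_A$. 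This yields the first assertion.

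For the $KK$-equivalence I would use the refinement recorded in Remark~\ref{rem-semiK} (equivalently, the $KK$-statement of Theorem~\ref{thm-BC}). Under the bijection $\IG\cong G/P^*=G$, every stabilizer $G_X$ with $X\in\IG$ is trivial, hence $A\rtimes_{\alpha,r}G_X\cong A$. Thus when $G$ satisfies the strong Baum--Connes conjecture and $A$ lies in the bootstrap class, the bootstrap requirement on crossed products by finite subgroups of stabilizers is automatic; and when $G$ is instead torsion-free, it has no non-trivial finite subgroups, which is the alternative sufficient condition. In either case one obtains a $KK$-equivalence, which by the identification above is induced by $\iota_A$. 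The only real point requiring care in the whole argument is the matching of the abstract isomorphism produced by Theorem~\ref{principal-ideals} with the concrete inclusion $\iota_A$; this identification is the content of the commuting-square argument inside the proof of Theorem~\ref{principal-ideals} and becomes immediate here because $P^*=\{e\}$.
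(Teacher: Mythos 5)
Your proposal is correct and follows exactly the route the paper takes: the paper likewise observes that (QL1) forces $\JG=\{g\cdot P: g\in G\}\cup\{\emptyset\}$, that the Toeplitz condition is \cite[\S 8.1]{Li-nuc}, and that (QL0) gives $P^*=\{e\}$, and then quotes Theorem~\ref{principal-ideals}. The only details you add beyond the paper's two-line derivation (the closure argument for $\JG$, the identification $\iota_{A\rtimes_r P^*}=\iota_A$ when $P^*$ is trivial, and the case split for the $KK$-equivalence via Remark~\ref{rem-semiK}) are all accurate.
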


\begin{remark}\label{rem-Toeplitz} The easiest example of a quasi-lattice semigroup is the case $\NN\subseteq \ZZ$
where $\NN$ denotes the non-negative integers.   If $\alpha:\ZZ\to \Aut(A)$ is an action by automorphisms, then the
crossed product $A\rtimes_{\alpha,r}\NN$ coincides with the Toeplitz algebra $\mathcal T=\mathcal T(A)$ as constructed by Pimsner and Voiculescu in \cite{PV}. Indeed, the main work in proving the famous six-term sequence for computing the $K$-theory of $A\rtimes_{\alpha}\ZZ$ as given in \cite[Theorem 2.4]{PV} is to show that the canonical  imbedding $\iota_A:A\to A\rtimes_{\alpha,r}\NN=\mathcal T(A)$ induces an isomorphism in $K$-theory. The above theorem gives a very general version of this important result of Pimsner and Voiculescu.

We should also point out that for positive cones $P$ in certain quasi-lattice ordered groups $G$ (right-angled Artin groups of a special type) and for the trivial coefficient $A=\CC$, the result $K_*(\CC)\cong K_*(C_\lambda^*(P))$ was already obtained in \cite[Theorem~3.3 and Proposition~3.4]{Ivanov}.
\end{remark}

We proceed by constructing natural examples of subsemigroups of groups which satisfy $\JG = \left\{ g \cdot P \;\text{: } g \in G \right\} \cup \left\{ \emptyset \right\}$ without being quasi-lattice ordered:

Let $R$ be a principal ideal domain and $M_n^{\times}(R) := \left\{p \in M_n(R) \;\text{: } \det(p) \neq 0 \right\}$.

\blemma
The constructible right ideals of $P = M_n^{\times}(R)$ are principal.
\elemma
\begin{proof}
We want to show that $\cJ_P = \left\{pP \text{: } p \in P \right\} \cup \left\{\emptyset\right\}$. The only thing which we have to prove is that for every $p, q \in P$, the right ideal $q^{-1} p P$ is also of the form $r P$ for some $r \in P$.

Let $\ti{q} \in P$ satisfy $q \ti{q} = \ti{q} q = \det(q) \cdot 1_n$ ($1_n$ is the identity matrix). Then $q^{-1} p P = (\ti{q} q)^{-1} (\ti{q}p) P = (\det(q) \cdot 1_n)^{-1} (\ti{q}p) P = (\det(q) \cdot 1_n)^{-1} ((\ti{q}p P) \cap \det(q) \cdot P)$. Now consider the Smith normal form of $\ti{q}p$, i.e. find $u$, $v$ in $SL_n(R) \subseteq P$ such that $u \ti{q} p v$ is diagonal, $u \ti{q} p v = \text{diag}(\alpha_1, \dotsc, \alpha_n)$. Thus
\bglnoz
  (\ti{q}p P) \cap \det(q) \cdot P &=& (u^{-1} \text{diag}(\alpha_1, \dotsc, \alpha_n) v^{-1} P) \cap (\det(q) \cdot P) \\
  &=& u^{-1} (\text{diag}(\alpha_1, \dotsc, \alpha_n) P \cap \det(q) \cdot P) \\
  &=& u^{-1} \text{diag}(\lcm(\alpha_1,\det(q)), \dotsc, \lcm(\alpha_n,\det(q))) P.
\eglnoz
Therefore $q^{-1} p P$ can be written as
\bglnoz
  && (\det(q) \cdot 1_n)^{-1} ((\ti{q}p P) \cap \det(q) \cdot P) \\
  &=& (\det(q) \cdot 1_n)^{-1} u^{-1} \text{diag}(\lcm(\alpha_1,\det(q)), \dotsc, \lcm(\alpha_n,\det(q))) P \\
  &=& u^{-1} \text{diag}(\det(q)^{-1} \lcm(\alpha_1,\det(q)), \dotsc, \det(q)^{-1} \lcm(\alpha_n,\det(q))) P.
\eglnoz
Set $r := u^{-1} \text{diag}(\det(q)^{-1} \lcm(\alpha_1,\det(q)), \dotsc, \det(q)^{-1} \lcm(\alpha_n,\det(q)))$, and we arrive at $q^{-1} p P = r P$.
\end{proof}

Going through the proof, it becomes clear that our argument applies whenever $P$ is a subsemigroup of $M_n^{\times}(R)$ such that
\begin{itemize}
\item $SL_n(R) \subseteq P$,
\item for every $q$ in $P$, the element $\ti{q} \in M_n^{\times}(R)$ uniquely determined by $q \ti{q} = \ti{q} q = \det(q) \cdot 1_n$ also lies in $P$,
\item whenever a diagonal matrix $\text{diag}(\alpha_1, \dotsc, \alpha_n)$ lies in $P$, then for every $q$ in $P$, the diagonal matrix $$\text{diag}(\det(q)^{-1} \lcm(\alpha_1,\det(q)), \dotsc, \det(q)^{-1} \lcm(\alpha_n,\det(q)))$$ also lies in $P$.
\end{itemize}

The second condition implies that $P$ is left Ore. Thus $P \subseteq P^{-1} P =: G$ is Toeplitz. As we have shown that all constructible ideals of $P$ are principal, it follows from our discussions above that $\JG = \left\{ g \cdot P \text{ : } g \in G \right\} \cup \left\{ \emptyset \right\}$.

In general, for such semigroups, our conditions concerning the Baum-Connes conjecture are very difficult to verify. But at least for $n=2$ we  get:

\begin{theorem}\label{thm-Mn}
{Suppose that $R$ is a principal ideal domain with field of fractions $K$ and
let $P\subseteq M_2^{\times}(R)$ be a subsemigroup satisfying the above conditions. Let $G=P^{-1}P\subseteq \GL_2(K)$. Then for every action $\alpha:G\to \Aut(A)$ the inclusion $\iota_{A\rtimes_{\alpha,r}P^*}:A\rtimes_{\alpha,r}P^*\to A\rtimes_{\alpha,r}P$ induces an isomorphism
$$K_*(A\rtimes_{\alpha,r}P^*)\cong K_*(A\rtimes_{\alpha,r}P).$$
Moreover, if $A\rtimes_\alpha F$ satisfies the UCT for every finite subgroup $F$ of $P^*$ (which is   true if  $A$ is type I), then $\iota_{A\rtimes_{\alpha,r}P^*}$ induces a $KK$-equivalence.}
\end{theorem}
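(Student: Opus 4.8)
The plan is to deduce the statement from \thmref{principal-ideals}. From the Smith normal form computation above we already know that all constructible right ideals of $P$ are principal, so $\JP=\{pP:p\in P\}\cup\{\emptyset\}$ is independent, and that $P$ is left Ore, hence $P\subseteq G=P^{-1}P$ is Toeplitz and $\JG=\{g\cdot P:g\in G\}\cup\{\emptyset\}$. Thus $P\subseteq G$ lies exactly in the setting of \thmref{principal-ideals}; the stabilizer of $P\in\IG$ is $G_P=P^*$, and the inclusion occurring there is precisely our $\iota_{A\rtimes_{\alpha,r}P^*}$. It therefore remains only to verify, for all coefficient algebras $A$, the Baum--Connes hypotheses of \thmref{principal-ideals}: that $P^*$ satisfies the Baum--Connes conjecture with coefficients in $A$ and that $G$ satisfies it for $A\otimes\DG$ with the diagonal action; for the second assertion we want in addition that $G$ satisfy the \emph{strong} Baum--Connes conjecture, the hypothesis that $A\rtimes_\alpha F$ satisfy the UCT for every finite $F\leq P^*$ supplying the bootstrap-class input needed in \thmref{principal-ideals} (and being automatic when $A$ is type~I, since crossed products of type~I algebras by finite groups are again type~I and hence in the bootstrap class).

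The only place where $n=2$ is genuinely used is the following claim: \emph{the group $G$, as a subgroup of $\GL_2(K)$, is a-$T$-menable}. Granting this, Higson--Kasparov \cite{HK} show that $G$ satisfies the strong Baum--Connes conjecture with arbitrary coefficients; since a-$T$-menability passes to subgroups, so does $P^*$, and hence all three hypotheses above hold for every $A$, so \thmref{principal-ideals} applies verbatim and gives both the $K$-theory isomorphism and, under the UCT hypothesis, the $KK$-equivalence.

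To prove the claim I would first reduce to finitely many primes: every matrix in $\GL_2(K)$ has entries in $R[s^{-1}]$ for some $s\in R\setminus\{0\}$, so $G=\bigcup_s\big(G\cap\GL_2(R[s^{-1}])\big)$ is a directed union; as a-$T$-menability is inherited by subgroups and directed unions, it suffices to treat $\GL_2(R[s^{-1}])$ for a fixed such $s$. Here $R[s^{-1}]$ is again a principal ideal domain and $\GL_2(R[s^{-1}])$ is an $S$-arithmetic group, $S$ being the finite set of inverted primes together with the archimedean places (if any); it acts diagonally on the finite product $X_S$ of the corresponding Bruhat--Tits trees and (in the number field case) rank one symmetric spaces. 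Each factor of $X_S$ carries a proper wall structure, so the only point to control is the vertex stabilizers: the stabilizer of a standard point of $X_S$ is conjugate to $\GL_2$ of the ring of elements of $K$ that are integral at \emph{every} place, i.e.\ to $\GL_2$ of the constant field of $K$. One then concludes by induction on the constant field, using a Nagao/Bass--Serre decomposition ($\GL_2(k[t])=\GL_2(k)*_{B(k)}B(k[t])$, together with directed unions of finitely generated subfields), with base cases $\GL_2$ of $\R$, $\C$, a finite field, or a non-archimedean local field, all of which are a-$T$-menable. In the cases of primary interest, where $K$ is a global field, no induction is needed: the $S$-arithmetic lattice already acts properly on $X_S$.

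The main obstacle is precisely this a-$T$-menability of $G$ when $K$ is ``large'' (say of positive transcendence degree over its prime field): then the naive product-of-trees action is no longer proper and one must run the Bass--Serre induction while checking at each stage that the right permanence property of a-$T$-menability is available (proper actions on finite products of spaces with walls, resp.\ amalgams over finite subgroups). An alternative that sidesteps part of this is to appeal to \remref{rem-BC}: for the hypothesis on $A\otimes\DG$ it is enough that the transformation groupoid $\Om_{P\subseteq G}\rtimes G$ be amenable, which follows from topological amenability of the action of $\GL_2$ on the boundary of its Bruhat--Tits building, leaving only the Baum--Connes conjecture with coefficients for the single group $P^*$ to be handled on its own.
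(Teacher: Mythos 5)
Your reduction is exactly the paper's: the Smith normal form lemma puts $P\subseteq G$ in the setting of \thmref{principal-ideals} with $\JG=\{g\cdot P\}\cup\{\emptyset\}$ and stabilizer $G_P=P^*$, the inclusion there is $\iota_{A\rtimes_{\alpha,r}P^*}$, the UCT/bootstrap hypothesis is handled via \remref{rem-semiK}, and everything collapses to the single claim that $G\subseteq\GL_2(K)$ is a-$T$-menable (whence, by Higson--Kasparov, $G$ satisfies the strong Baum--Connes conjecture with arbitrary coefficients, and $P^*\leq G$ is a-$T$-menable as well). All of this is correct and is precisely how the paper argues.

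The one genuine gap is the a-$T$-menability claim itself, and you have correctly located it: your Bruhat--Tits/Nagao--Bass--Serre sketch works when $K$ is a global field but, as you concede, does not close the case of a general principal ideal domain whose fraction field has positive transcendence degree over its prime field (and the theorem is stated for arbitrary PIDs $R$). The paper does not attempt this argument at all; it simply invokes Theorem~4 of Guentner--Higson--Weinberger, \emph{The Novikov conjecture for linear groups} (the reference \cite{GHW} in the bibliography), which states that \emph{every} countable subgroup of $\GL_2(K)$, for $K$ an arbitrary field, has the Haagerup property. That theorem is proved by essentially the strategy you outline (finitely generated reduction, actions on products of trees and hyperbolic planes attached to finitely many valuations, spaces with walls), so your instinct is right, but carrying it out in full generality is a substantial piece of work that you should not reprove; cite it. With that citation in place of your sketch, your argument is complete and coincides with the paper's proof. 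Your closing alternative via \remref{rem-BC} (amenability of the groupoid $\Om_{P\subseteq G}\rtimes G$) is not needed and would still leave you with Baum--Connes for $P^*$ with coefficients in $A$, so it does not actually sidestep the problem.
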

\begin{proof} {Since $G$ is a countable subgroup of $\GL(2,K)$ it  
 is a-$T$-menable by \cite[Theorem 4]{GHW}. Thus it follows from \cite{HK} that
 $G$ satisfies the strong Baum-Connes conjecture
and the proof follows from Theorem \ref{principal-ideals} and Remark \ref{rem-semiK}.}
\end{proof}

\section{The left and right regular C*-algebra for a semidirect product}\label{sec-left-right}

Let $S$ be a cancellative semigroup. In this section we are interested not only
in the left regular C*-algebra $C^*_\lambda (S)$, but also in the
right regular C*-algebra $C^*_\rho (S)$ generated by the right
regular representation $\rho$ of $S$ on $\ell^2(S)$. Since $C^*_\rho
(S)$ is obviously isomorphic to the left regular C*-algebra of the
opposite semigroup, we might formulate the corresponding arguments
in terms of the left regular representation of the opposite
semigroup. However it will be more convenient to work directly with
the right regular representation. We will be especially interested
in comparing the $K$-theory for the right and left regular
C*-algebras.\mn

\subsection{Ideal independence and Toeplitz condition for the right regular C*-algebra of a semidirect product semigroup}

Assume that the semigroup $S$ is embedded into a group $\bar{S}$ and
let $E$ denote the orthogonal projection of $\ell^2(\bar{S})$ onto
$\ell^2(S)$. Denote by $\rho,\bar{\rho}$ the right regular
representations of $S$ and $\bar{S}$, respectively. We say that
$S\subseteq \bar{S}$ satisfies the right Toeplitz condition if every
operator $E\bar{\rho}(t)E {\neq 0}$ with $t\in \bar{S}$ can be written as a
finite product of elements of the form $\rho (s)$, $s\in S$ and
their adjoints. Of course, this is just saying that the embedding of
the opposite semigroups $S^{\,op}\subseteq \bar{S}^{\,op}$ satisfies
the ordinary Toeplitz condition. By a constructible left ideal in
$S$ we mean a left ideal $I$ such that the opposite ideal $I^{\,op}$
is a constructible right ideal in $S^{\,op}$.

Let now $P$ be a semigroup with unit which acts (on the left) by
injective endomorphisms on the group $H$. We can form the semidirect
product $S=H\rtimes P$. The elements of $H\rtimes P$ are pairs
$(h,p)$, $h\in H,p\in P$ and the multiplication rule is given by
$(h_1,p_1)(h_2,p_2)=(h_1 p_1(h_2),p_1p_2)$. Note that $H\rtimes P$
is left or right cancellative if and only if $P$ is.

\begin{sproposition}\label{lef} The left ideals in $S$ are exactly the subsets of
the form $H\times I$ where $I$ is a left ideal in $P$. The
constructible left ideals in $S$ are exactly those ideals $H\times
I$ where $I$ is a constructible left ideal in $P$.
\end{sproposition}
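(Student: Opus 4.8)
The plan is to establish the two assertions in order, the first by a direct saturation argument and the second by a pair of ``smallest family'' inductions once the first is available.

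For the first assertion I would use that $S=H\rtimes P$ has unit $(e_H,e_P)$ and that $e_P$ acts on $H$ as $\id_H$. Given a left ideal $J\subseteq S$, set $I:=\{p\in P: (h,p)\in J\text{ for some }h\in H\}$, the image of $J$ under the projection $S\to P$. Then $J=H\times I$: if $(h,p)\in J$ and $h'\in H$ is arbitrary, then $(h'h^{-1},e_P)(h,p)=(h',p)$ lies in $J$. Moreover $I$ is a left ideal of $P$, since for $q\in P$ and $p\in I$ one has $(e_H,q)(h,p)=(q(h),qp)\in J$, whence $qp\in I$. The converse --- that $H\times I$ is a left ideal of $S$ whenever $I$ is a left ideal of $P$ --- is the one-line computation $(h_1,p_1)(h_2,p_2)=(h_1p_1(h_2),p_1p_2)\in H\times I$, valid because $p_1p_2\in PI\subseteq I$.

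For the second assertion I would first translate ``constructible left ideal in $S$'' (a left ideal $L$ with $L^{\,op}$ a constructible right ideal of $S^{\,op}$) into concrete operations on subsets of $S$. Unwinding Definition~\ref{def-ideals} applied to $S^{\,op}$, the constructible left ideals of $S$ form the smallest family $\mathcal{L}_S$ of subsets of $S$ containing $\emptyset$ and $S$ and closed under $L\mapsto Ls$ and $L\mapsto\{y\in S: ys\in L\}$ for $s\in S$; likewise the constructible left ideals of $P$ form the smallest such family $\mathcal{L}_P$ built from $\emptyset$, $P$ using the analogous operations $I\mapsto Ip$ and $I\mapsto\{q\in P: qp\in I\}$. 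The two computations that drive everything, both immediate from the multiplication rule and from $H$ being a group, are that $(H\times I)(h_0,p_0)=H\times(Ip_0)$ (the coordinate $h\,p(h_0)$ runs over all of $H$ as $h$ does) and that $\{(h,p)\in S: (h,p)(h_0,p_0)\in H\times I\}=H\times\{p\in P: pp_0\in I\}$ (the first coordinate being unconstrained); in particular the $H$-component $h_0$ of the translating element is irrelevant.

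With these in hand I would run two minimality arguments. First, $\{H\times I: I\in\mathcal{L}_P\}$ contains $\emptyset=H\times\emptyset$ and $S=H\times P$ and, by the two identities above together with closure of $\mathcal{L}_P$ under its operations, is closed under the two operations defining $\mathcal{L}_S$; hence $\mathcal{L}_S\subseteq\{H\times I: I\in\mathcal{L}_P\}$. Second, $\{I\subseteq P: H\times I\in\mathcal{L}_S\}$ contains $\emptyset$ and $P$ and --- applying the same identities with $h_0=e_H$ --- is closed under the two operations defining $\mathcal{L}_P$; hence $\mathcal{L}_P\subseteq\{I\subseteq P: H\times I\in\mathcal{L}_S\}$. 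Combining the inclusions gives $\mathcal{L}_S=\{H\times I: I\in\mathcal{L}_P\}$, which with the first assertion is exactly the claim. I expect the only genuine subtlety to be this bookkeeping step --- reading off the two generating operations for constructible left ideals from the definition of constructible right ideals of the opposite semigroup and keeping the direction of multiplication straight --- after which everything reduces to the identity $(h_1,p_1)(h_2,p_2)=(h_1p_1(h_2),p_1p_2)$ and the fact that the group $H$ is freely ``absorbed'' in the first coordinate, so the two inductions are routine.
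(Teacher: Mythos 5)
Your proof is correct and follows essentially the same route as the paper: the saturation argument $(h'h^{-1},e_P)(h,p)=(h',p)$ for the first assertion, and the two identities $(H\times I)(h_0,p_0)=H\times Ip_0$ and $(H\times I)(h_0,p_0)^{-1}=H\times Ip_0^{-1}$ for the second. The paper merely states these identities and declares the constructibility claim proved, whereas you spell out the two minimality inductions explicitly; that is a harmless (indeed welcome) elaboration, not a different method.
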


\begin{proof} The subsets of the given form are obviously left ideals.
Conversely, assume that $K$ is a left ideal in $S$. Then $(x,q)\in
K$ implies that $(H,q)\subseteq K$ and $(y,Pq)\subseteq K$ for all
$y\in H$. Thus $K$ is as claimed.

Moreover, if $K=H\times I$ is a left ideal and $(h,p)\in H\rtimes
P$, then $K(h,p)=H\times Ip$ and $K(h,p)^{-1}=H\times Ip^{-1}$. This
shows the assertion concerning the constructible left ideals.
\end{proof}

\begin{scorollary}\label{C1} If the  constructible left ideals of $P$ form an
independent family, then the same is true for the constructible left
ideals of $S$.\end{scorollary}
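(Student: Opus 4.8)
The plan is to reduce immediately to the independence hypothesis on $P$ via the explicit description of the constructible left ideals of $S$ given by Proposition~\ref{lef}. That proposition tells us that the constructible left ideals of $S = H \rtimes P$ are precisely the sets of the form $H \times I$, with $I$ ranging over the constructible left ideals of $P$, and that the assignment $I \mapsto H \times I$ sets up a bijection between the two families. Since $H$ is a group, it is in particular nonempty, so this assignment is injective and, more to the point, it is compatible with unions: for any family $(I_m)_m$ of subsets of $P$ one has $\bigcup_m (H \times I_m) = H \times \bigl(\bigcup_m I_m\bigr)$.

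Concretely, I would start from a finite family $H \times I,\, H \times I_1,\, \ldots,\, H \times I_k$ of constructible left ideals of $S$ (so each of $I, I_1, \ldots, I_k$ is a constructible left ideal of $P$) satisfying $H \times I = \bigcup_{m=1}^{k}(H \times I_m)$. Using the displayed identity for unions, this equation is equivalent to $I = \bigcup_{m=1}^{k} I_m$ inside $P$. By the assumed independence of the family of constructible left ideals of $P$ in the sense of Definition~\ref{def-independentset}, there is an index $m_0 \in \{1, \ldots, k\}$ with $I_{m_0} = I$, and hence $H \times I_{m_0} = H \times I$. This is exactly the independence condition for the constructible left ideals of $S$, which completes the proof.

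There is essentially no genuine obstacle here: the structural content is entirely packaged in Proposition~\ref{lef}, and what remains is the elementary observation that forming products with the fixed nonempty factor $H$ commutes with the Boolean operations. The only points deserving a word of care are that $H \neq \emptyset$ (automatic, since $H$ is a group, guaranteeing both injectivity of $I \mapsto H \times I$ and reflection of unions) and that the empty ideal is handled consistently on both sides, since $H \times \emptyset = \emptyset$.
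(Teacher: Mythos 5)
Your argument is correct and is exactly the content the paper compresses into ``Obvious from Proposition \ref{lef}'': the description of the constructible left ideals of $S$ as the sets $H\times I$, together with the fact that the nonempty factor $H$ makes $I\mapsto H\times I$ injective and compatible with finite unions, reduces independence for $S$ to independence for $P$. No further comment is needed.
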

 \begin{proof} Obvious from Proposition
\ref{lef}.\end{proof}

Assume now that $P$ satisfies the left Ore condition so that $P$ can
be embedded into a group $\bar{P}$ for which $\bar{P}=P^{-1}P$.

Moreover then $\bar{P}$ can be written as $\varinjlim_{p\in P} P$, i.e. as the limit of the
inductive system $(L_p)_{p\in P}$ where $L_p=P$ and the map $L_p\to
L_{qp}$ is given by multiplication by $q$. We set
$\bar{H}=P^{-1}H= {\varinjlim_{p\in P} H}$ with the analogous
inductive limit. \mn

\begin{sproposition} The semigroup $S$ has a natural embedding into the group
$\bar{S}=\bar{H}\rtimes \bar{P}$.\end{sproposition}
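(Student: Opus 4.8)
The plan is to reduce the statement to a single point: the action of $P$ on $H$ by injective endomorphisms extends to an action of $\bar P$ on $\bar H$ by \emph{automorphisms}, compatibly with the inclusions $H\hookrightarrow\bar H$ and $P\hookrightarrow\bar P$; once this is in place the embedding of $S$ is essentially formal. Write $\theta_p\in\End(H)$ for the endomorphism by which $p\in P$ acts, so that multiplication in $S=H\rtimes P$ is $(h_1,p_1)(h_2,p_2)=(h_1\theta_{p_1}(h_2),p_1p_2)$. Using the inductive-limit description $\bar H=P^{-1}H=\varinjlim_{p\in P}H$, a typical element of $\bar H$ is a class $[(p,h)]$ ($p\in P$, $h\in H$) with $(p,h)$ identified with $(qp,\theta_q(h))$ for every $q\in P$; one reads $[(p,h)]$ as ``$p^{-1}(h)$'', and injectivity of the bonding maps $\theta_q$ makes $\iota_H\colon h\mapsto[(e,h)]$ an embedding of $H$ into $\bar H$. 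For $r\in P$ I would then put $\theta^{\bar H}_r([(p,h)]):=[(p',\theta_{r'}(h))]$, where $p',r'\in P$ satisfy $p'r=r'p$ (such a pair exists because $Pr\cap Pp\neq\emptyset$, by the left Ore condition); the standard Ore argument shows this is independent of the chosen $p',r'$ and is a group homomorphism of $\bar H$, and one checks directly that $[(p,h)]\mapsto[(pr,h)]$ is its inverse. Hence each $\theta^{\bar H}_r$ lies in $\Aut(\bar H)$, the map $r\mapsto\theta^{\bar H}_r$ is a semigroup homomorphism $P\to\Aut(\bar H)$, and $\theta^{\bar H}_r\circ\iota_H=\iota_H\circ\theta_r$.

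Since $P$ is left Ore it embeds into $\bar P=P^{-1}P$, and by the universal property of the Ore localization the semigroup homomorphism $P\to\Aut(\bar H)$ extends uniquely to a group homomorphism $\bar\theta\colon\bar P\to\Aut(\bar H)$ with $\bar\theta_p=\theta^{\bar H}_p$ for $p\in P$ (here one needs only that an equality between elements of $P$ which holds in $\bar P$ already holds in $P$). Now $\bar S:=\bar H\rtimes_{\bar\theta}\bar P$ is a group, and I would define $j\colon S\to\bar S$ by $j(h,p)=(\iota_H(h),\iota_P(p))$, with $\iota_P\colon P\hookrightarrow\bar P$ the canonical inclusion. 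Both $\iota_H$ and $\iota_P$ are injective, so $j$ is injective, and $j$ is a homomorphism since, identifying $H$ and $P$ with their images in $\bar H$ and $\bar P$,
\[
j\big((h_1,p_1)(h_2,p_2)\big)=\big(h_1\,\bar\theta_{p_1}(h_2),\ p_1p_2\big)=j(h_1,p_1)\,j(h_2,p_2),
\]
where the first equality uses that the $S$-product of $(h_1,p_1)$ and $(h_2,p_2)$ is $(h_1\theta_{p_1}(h_2),p_1p_2)$ together with $\bar\theta_{p_1}|_H=\theta_{p_1}$, and the second is the semidirect-product law in $\bar S$. This yields the asserted embedding; if one wishes, one can also observe that $S$ is itself left Ore and that $j$ identifies $\bar S$ with $S^{-1}S$.

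The only part with genuine content is the construction of $\bar\theta$: that each $\theta_r$ becomes invertible on $\bar H$ — this is precisely what ``localizing $H$ at the $P$-action'' achieves, and it is here that injectivity of the endomorphisms and the directedness coming from the left Ore condition enter — and that the resulting homomorphism $P\to\Aut(\bar H)$ factors through $\bar P$, which is just the universal property of Ore localization for a group target. The remaining verifications, in particular that the group law on $\bar H\rtimes_{\bar\theta}\bar P$ restricts along $j$ to the given multiplication of $S$, are routine bookkeeping with the Ore condition.
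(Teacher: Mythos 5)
Your proof is correct and follows exactly the route the paper intends: the paper sets up $\bar H=P^{-1}H=\varinjlim_{p\in P}H$ and $\bar P=P^{-1}P$ immediately before the statement and leaves the verification to the reader, and your argument supplies precisely the missing standard details (that the injective endomorphisms dilate to automorphisms of the inductive limit, that the $P$-action on $\bar H$ factors through $\bar P$ by the universal property of the Ore localization, and that the resulting semidirect product receives $S$ via the evident injective homomorphism). No gaps.
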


We denote by $\rho$ the right regular representation of $S$ on
$\ell^2(S)$ and by $\bar{\rho}$ the right regular representation of
$\bar{S}$ on $\ell^2(\bar{S})$. As above $E$ denotes the orthogonal
projection $\ell^2(\bar{S})\to\ell^2(S)$. We consider $P,\bar{P}$
and $H,\bar{H}$ as subsemigroups of $S,\bar{S}$.

\begin{slemma}\label{S1} Let $(g,z)$ be an element of $\bar{S}$, $g\in
\bar{H}$, $z\in \bar{P}$. Then
$E\bar{\rho}((g,z))E=E\bar{\rho}(z)E\bar{\rho}(g)E$.\end{slemma}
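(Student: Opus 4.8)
The plan is to factor the element $(g,z)$ inside the group $\bar{S}=\bar{H}\rtimes\bar{P}$ and then use multiplicativity of the right regular representation. By the multiplication rule $(h_1,p_1)(h_2,p_2)=(h_1\,p_1(h_2),p_1p_2)$ we have $(g,z)=(g,e_{\bar{P}})\,(e_{\bar{H}},z)$, i.e. $(g,z)$ is the product of the element $g\in\bar{H}$ with the element $z\in\bar{P}$ (in that order) once $\bar{H}$ and $\bar{P}$ are viewed as subgroups of $\bar{S}$. Since $\bar{\rho}$ is a \emph{right} regular representation it reverses products (it is an anti-homomorphism of $\bar{S}$, the same phenomenon underlying $C^*_\rho(S)\cong C^*_\lambda(S^{\mathrm{op}})$), so $\bar{\rho}((g,z))=\bar{\rho}((e_{\bar{H}},z))\,\bar{\rho}((g,e_{\bar{P}}))=\bar{\rho}(z)\,\bar{\rho}(g)$. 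Inserting $1=E+(1-E)$ between the two factors, the asserted identity reduces to the single claim
\[ E\,\bar{\rho}(z)\,(1-E)\,\bar{\rho}(g)\,E=0 .\]

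To establish this I would pass to the canonical unitary identification $\ell^2(\bar{S})=\ell^2(\bar{H}\rtimes\bar{P})\cong\ell^2(\bar{H})\otimes\ell^2(\bar{P})$ given by $\delta_{(a,q)}\mapsto\delta_a\otimes\delta_q$. Under it: (i) since $(a,q)\in S$ precisely when $a\in H$ and $q\in P$, the projection $E$ factors as $E=E_H\otimes E_P$ with $E_H\colon\ell^2(\bar{H})\to\ell^2(H)$ and $E_P\colon\ell^2(\bar{P})\to\ell^2(P)$ the coordinate projections; (ii) $\bar{\rho}(z)=\bar{\rho}((e_{\bar{H}},z))$ acts by $\delta_{(a,q)}\mapsto\delta_{(a,qz)}$, hence has the form $1\otimes R_z$ (with $R_z$ right translation by $z$ on $\ell^2(\bar{P})$) and commutes with $E_H\otimes 1$; (iii) $\bar{\rho}(g)=\bar{\rho}((g,e_{\bar{P}}))$ acts by $\delta_{(a,q)}\mapsto\delta_{(a\,q(g),\,q)}$, so it leaves the $\bar{P}$-coordinate unchanged, commutes with $1\otimes E_P$, and consequently $(1\otimes E_P)\,\bar{\rho}(g)\,E=\bar{\rho}(g)\,E$. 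Feeding these in, $(1-E)\bar{\rho}(g)E=(1-E)(1\otimes E_P)\bar{\rho}(g)E$ with $(1-E)(1\otimes E_P)=(1\otimes E_P)-(E_H\otimes E_P)=(1-E_H)\otimes E_P$, and then by (ii)
\[ E\,\bar{\rho}(z)\,\big((1-E_H)\otimes E_P\big)=E\,\big((1-E_H)\otimes 1\big)\,\big(1\otimes R_zE_P\big), \]
while $E\,\big((1-E_H)\otimes 1\big)=(E_H\otimes E_P)\big((1-E_H)\otimes 1\big)=(E_H-E_H^2)\otimes E_P=0$. Hence the claim holds and $E\bar{\rho}((g,z))E=E\bar{\rho}(z)E\bar{\rho}(g)E$.

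One can also argue concretely on basis vectors: for $(h,p)\in S$ we have $\bar{\rho}(g)\delta_{(h,p)}=\delta_{(h\,p(g),\,p)}$, which lies in $S$ exactly when $p(g)\in H$ (here $p$ acts on $\bar{H}$ through the extended injective endomorphism and $h\in H$); if it does not, then $\bar{\rho}(z)$ only modifies the $\bar{P}$-coordinate, leaving the $\bar{H}$-coordinate equal to $h\,p(g)\notin H$, so the resulting basis vector is still outside $S$ and is killed by $E$. In all cases $E\bar{\rho}(z)(1-E)\bar{\rho}(g)E\delta_{(h,p)}=0$, and such vectors span $E\ell^2(\bar{S})$. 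I expect the only real obstacle to be bookkeeping with conventions: one must keep track that "right regular" makes $\bar{\rho}$ an anti-homomorphism, so that the factorization $(g,z)=(g,e_{\bar{P}})(e_{\bar{H}},z)$ indeed produces $\bar{\rho}(z)$ before $\bar{\rho}(g)$ as in the statement, and that it is precisely the semidirect-product structure (the $P$-action on $\bar{H}$) which makes $\bar{\rho}(g)$ mix coordinates while fixing the $\bar{P}$-coordinate; with those points clear, the rest is routine.
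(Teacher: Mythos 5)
Your proof is correct and, at its core (the basis-vector computation in your final paragraph), is exactly the paper's one-line argument: both sides send $\xi_{(a,x)}$ to $\xi_{(a\,x(g),\,xz)}$ when $x(g)\in H$ and $xz\in P$, and to $0$ otherwise, the key point being that $\bar{\rho}(z)$ only moves the $\bar{P}$-coordinate and so cannot bring a vector back into $\ell^2(S)$ once $\bar{\rho}(g)$ has pushed its $\bar{H}$-coordinate out of $H$. Your extra packaging --- the anti-multiplicativity $\bar{\rho}((g,z))=\bar{\rho}(z)\bar{\rho}(g)$ and the vanishing of the cross term $E\bar{\rho}(z)(1-E)\bar{\rho}(g)E$ in the picture $\ell^2(\bar{S})\cong\ell^2(\bar{H})\otimes\ell^2(\bar{P})$, $E=E_H\otimes E_P$ --- is a correct, slightly more structural rendering of the same computation.
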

\begin{proof}
Both operators evaluated on an element $\xi_{(a,x)}$ of the standard
orthonormal basis of $\ell^2(S)$ give $\xi_{(ax(h),xz)}$ if $x$ is
such that $ax(h)\in H$, $xg\in P$, and give 0 otherwise.\end{proof}

\begin{slemma}\label{S2} Let $z\in \bar{P}$, $h\in H$ and $g=z(h)\in
\bar{H}$. Assume that \bgl\label{J1}\{x\in P:xz\in P\}\;=\;\{x\in
P:x\,z(h)\in H\}.\egl Then $E\bar{\rho} (g)E=E\bar{\rho} (z)^*E\rho
(h)E\bar{\rho} (z)E$.\end{slemma}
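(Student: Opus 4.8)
The plan is to verify the asserted operator identity by evaluating both sides on the standard orthonormal basis $\{\xi_{(a,x)} : (a,x)\in S\}$ of $\ell^2(S)$, exactly in the spirit of the proof of Lemma~\ref{S1}. The only facts I will use are: the formula $\bar\rho((g',z'))\xi_{(a,x)}=\xi_{(a\cdot x(g'),\,xz')}$ for $(g',z')\in\bar S$; that $E$ acts on basis vectors by $E\xi_{(b,y)}=\xi_{(b,y)}$ when $(b,y)\in S$ (i.e. $b\in H$ and $y\in P$) and $E\xi_{(b,y)}=0$ otherwise; that the $P$-action on $H$ by injective endomorphisms extends to the $\bar P$-action on $\bar H$, with $p(1)=1$ and $x(z(h))=(xz)(h)$; and, crucially, hypothesis \eqref{J1}.

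Computing the left-hand side on $\xi_{(a,x)}$ with $(a,x)\in S$: since $g=z(h)$ is viewed as $(z(h),1)\in\bar S$, we get $\bar\rho(g)\xi_{(a,x)}=\xi_{(a\cdot(xz)(h),\,x)}$ (using $x(z(h))=(xz)(h)$), and since the second coordinate already lies in $P$, the outer projections leave us with $\xi_{(a\cdot(xz)(h),\,x)}$ when $a\cdot(xz)(h)\in H$ and $0$ otherwise. As $a\in H$ is invertible in $H$, the surviving condition is simply $(xz)(h)\in H$.

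For the right-hand side I would apply the four operators to $\xi_{(a,x)}$, $(a,x)\in S$, in turn, keeping track of when each intermediate vector survives the relevant projection: $\bar\rho(z)$ gives $\xi_{(a,\,xz)}$; the first $E$ forces $xz\in P$; granting this, $\rho(h)$ gives $\xi_{(a\cdot(xz)(h),\,xz)}$, and now $a\cdot(xz)(h)\in H$ automatically because $P$ acts on $H$ by endomorphisms; then $\bar\rho(z)^*=\bar\rho(z^{-1})$ resets the second coordinate to $x$, after which the remaining $E$'s are automatically satisfied. Hence the right-hand side sends $\xi_{(a,x)}$ to $\xi_{(a\cdot(xz)(h),\,x)}$ when $xz\in P$, and to $0$ otherwise.

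Finally I would reconcile the two descriptions: hypothesis \eqref{J1} states precisely that, for $x\in P$, one has $xz\in P$ if and only if $(xz)(h)\in H$. Therefore both operators annihilate exactly the same basis vectors and agree, with common value $\xi_{(a\cdot(xz)(h),\,x)}$, on all the others, which is the claimed identity. I expect no genuine obstacle here; the one point that must be handled with care is the bookkeeping of when the successive intermediate vectors lie in $\ell^2(S)$ rather than only in $\ell^2(\bar S)$ — that is, the domains of the partial isometries $E\bar\rho(\cdot)E$ and $E\rho(\cdot)E$ — together with the observation that it is exactly \eqref{J1}, used in both directions, which makes the constraint ``$xz\in P$'' governing the right-hand side coincide with the constraint ``$(xz)(h)\in H$'' governing the left-hand side.
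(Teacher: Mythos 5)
Your proposal is correct and follows essentially the same route as the paper: both sides are evaluated on the standard orthonormal basis $\xi_{(a,x)}$, the right-hand side is seen to survive exactly when $xz\in P$ and the left-hand side exactly when $x\,z(h)\in H$, with common value $\xi_{(a\,xz(h),\,x)}$, and hypothesis \eqref{J1} identifies the two conditions. Your extra bookkeeping of the intermediate projections is the same verification the paper carries out implicitly.
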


\begin{proof} Let $\xi_{(a,x)}$, $a\in H,\,x\in P$ be an element of the
standard orthonormal basis in $\ell^2(S)$. We have
$$E\bar{\rho}(z)^*E\rho (h) E\bar{\rho} (z)\,\xi_{(a,x)} =
\left\{\begin{matrix}E\bar{\rho} (z)^* E\rho (h)
\,\xi_{(a,\,xz)}=\xi_{(axz(h),\,x)} & \mbox{if}\;
xz\in P,\\[2mm]
0 & \mbox{otherwise.}\end{matrix}\right. $$ On the other hand
$$E\bar{\rho} (g)\,\xi_{(a,x)}=E\bar{\rho} (z(h))\,\xi_{(a,x)}=
\left\{\begin{matrix}\xi_{(axz(h),x)} & \mbox{if}\;
xz(h)\in H, \\[2mm]  0 & \mbox{otherwise.}\end{matrix}\right.$$
\end{proof}

\begin{slemma}\label{S3} Assume that $P$ satisfies the right Toeplitz
condition. Then for each $z\in \bar{P}$ the operator
$E\bar{\rho}(z)E$ is a product of operators of the form $\rho (p)$
or $\rho (p)^*$, $p\in P$.\end{slemma}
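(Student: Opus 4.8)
The plan is to reduce the statement to the right Toeplitz condition for the inclusion $P\subseteq\bar{P}$, exploiting the tensor product decomposition of $\ell^2(H\rtimes P)$ along the semidirect product structure. Write $\rho_P$ and $\rho_{\bar P}$ for the right regular representations of $P$ on $\ell^2(P)$ and of $\bar{P}$ on $\ell^2(\bar{P})$, and let $E_P\colon\ell^2(\bar{P})\to\ell^2(P)$ be the orthogonal projection. First I would fix the identifications $\ell^2(S)=\ell^2(H)\otimes\ell^2(P)$ and $\ell^2(\bar{S})=\ell^2(\bar{H})\otimes\ell^2(\bar{P})$ via $\xi_{(a,x)}\mapsto\xi_a\otimes\xi_x$. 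Since the embedding $S\hookrightarrow\bar{S}$ constructed above identifies $S$ with the subset $H\times P$ of $\bar{H}\times\bar{P}$, under these identifications $E$ becomes the tensor product of the projection $\ell^2(\bar{H})\to\ell^2(H)$ with $E_P$. A direct computation on basis vectors, using that each $w\in\bar{P}$ fixes the identity of $\bar{H}$ (it acts by an endomorphism), gives $\bar\rho((e,z))(\xi_g\otimes\xi_w)=\xi_g\otimes\xi_{wz}$, that is $\bar\rho((e,z))=1_{\ell^2(\bar{H})}\otimes\rho_{\bar P}(z)$, and similarly $\rho((e,p))=1_{\ell^2(H)}\otimes\rho_P(p)$ for $p\in P$. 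Hence
$$E\bar\rho(z)E\;=\;1_{\ell^2(H)}\otimes\bigl(E_P\,\rho_{\bar P}(z)\,E_P\bigr)$$
as an operator on $\ell^2(S)=\ell^2(H)\otimes\ell^2(P)$.

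Now the map $T\mapsto 1_{\ell^2(H)}\otimes T$ is a unital $*$-homomorphism $\cL(\ell^2(P))\to\cL(\ell^2(S))$ which sends $\rho_P(p)$ to $\rho((e,p))=\rho(p)$, so it suffices to prove that $E_P\,\rho_{\bar P}(z)\,E_P$ is a finite product of operators of the form $\rho_P(p)$, $p\in P$, and their adjoints. Since $P$ is left Ore we may write $z=q^{-1}p_0$ with $p_0,q\in P$; then $E_P\,\rho_{\bar P}(z)\,E_P\,\xi_q=\xi_{p_0}\neq0$, so $E_P\,\rho_{\bar P}(z)\,E_P\neq0$ and the right Toeplitz condition for $P\subseteq\bar{P}$ furnishes the required factorization. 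Applying the $*$-homomorphism and replacing each factor $\rho_P(p)$ (resp. its adjoint) by $\rho(p)$ (resp. its adjoint) completes the argument.

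I do not expect a serious obstacle: the whole content is the observation that the right regular representation of $H\rtimes P$ is block-diagonal over the $P$-coordinate, equalling $1\otimes(\text{right regular representation of }P)$, and that this decomposition is compatible both with the inclusion $S\subseteq\bar{S}$ and with the cut-down by $E$. The only points needing a little care are verifying that $E\bar\rho(z)E$ (equivalently $E_P\,\rho_{\bar P}(z)\,E_P$) is nonzero so that the Toeplitz hypothesis can be invoked, and keeping straight which semigroup each of $\rho$, $\bar\rho$, $E$ refers to; once the identifications are set up the verification on basis vectors is routine.
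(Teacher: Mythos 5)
Your proof is correct and follows essentially the same route as the paper: both arguments identify $E\bar\rho(z)E$ as $1\otimes\bigl(E_P\rho_{\bar P}(z)E_P\bigr)$ with respect to a tensor decomposition over the $\bar P$-coordinate (the paper phrases this via the intermediate subspace $\ell^2(H\times\bar P)\cong\ell^2(H)\otimes\ell^2(\bar P)$, you via the global decomposition of $\ell^2(\bar S)$) and then import the factorization supplied by the right Toeplitz condition for $P\subseteq\bar P$. Your extra check that $E_P\rho_{\bar P}(z)E_P\neq0$ using the left Ore condition is a small point the paper leaves implicit, and it is correct.
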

\begin{proof} Let $\rho_0$,
$\bar{\rho}_0$ denote the right regular representation of $P$,
$\bar{P}$ on $\ell^2(P)$, $\ell^2(\bar{P})$, respectively, and let
$E_0$ be the orthogonal projection of $\ell^2(\bar{P})$ onto $\ell^2(P)$. By
assumption, there are $p_i,q_i\in P$ such that $E_0\bar{\rho}_0(z)E_0$ is a product of the form
$\rho_0(p_1)\rho_0(q_1)^*\ldots \rho_0(p_n)\rho_0(q_n)^*$.

The Hilbert space $\ell^2(\bar{S})$ has the following filtration by
subspaces
$$\ell^2(S)=\ell^2(H\rtimes P)\subseteq \;L=\ell^2(H\times \bar{P})\; \subseteq
\;\ell^2(\bar{H}\rtimes\bar{P})$$ On the subspace $L\cong
\ell^2(H)\otimes\ell^2(\bar{P})$, the operator $\bar{\rho}(z)$ acts
like $1\otimes\bar{\rho}_0(z)$. Similarly $E$ acts like $1\otimes
E_0$ on $L$. Thus, $E\bar{\rho}(z)E$ acts like
$$1\otimes
\rho_0(p_1)\rho_0(q_1)^*\ldots \rho_0(p_n)\rho_0(q_n)^*$$ and
therefore  {$E\bar{\rho}(z)E=\rho(p_1)\rho(q_1)^*\ldots
\rho(p_n)\rho(q_n)^*$}.\end{proof}

\begin{sproposition}\label{S4} Assume that $P$ satisfies the right Toeplitz
condition and that the following condition is satisfied:
\bgln \label{J-cond}
  & \textrm{for every}\; \bar{h} \in \bar{H} \text{, there exists } z
  \in \bar{P} \text{ and } h \in H \text{ such that } \\
  & \bar{h} = z(h) \text{ and } P \cap Pz^{-1} = \{x \in P:x(\bar{h})
  \in H\}. \nonumber
\egln Then $S\subseteq \bar{S}$ satisfies the right Toeplitz
condition.\end{sproposition}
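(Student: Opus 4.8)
The plan is to derive the right Toeplitz condition for $S\subseteq\bar S$ by assembling Lemmas~\ref{S1}, \ref{S2} and \ref{S3}, using \eqref{J-cond} only to supply the hypothesis of Lemma~\ref{S2}. Fix an element of $\bar S$; since $\bar S=\bar H\rtimes\bar P$ it has the form $(g,z)$ with $g\in\bar H$ and $z\in\bar P$, and we assume $E\bar\rho((g,z))E\neq 0$. By Lemma~\ref{S1},
$$E\bar\rho((g,z))E \;=\; E\bar\rho(z)E\,\bar\rho(g)E \;=\; \big(E\bar\rho(z)E\big)\big(E\bar\rho(g)E\big),$$
where the last equality uses $E=E^2$. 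Since the product is nonzero, both factors are nonzero, and it suffices to write each of them as a finite product of operators $\rho(s)$ and $\rho(s)^*$ with $s\in S$.

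For the first factor, Lemma~\ref{S3} applies, because $P$ satisfies the right Toeplitz condition and $E\bar\rho(z)E\neq 0$ forces $E_0\bar\rho_0(z)E_0\neq 0$; it exhibits $E\bar\rho(z)E$ as a finite product of $\rho(p)$ and $\rho(p)^*$ with $p\in P\subseteq S$. For the second factor apply \eqref{J-cond} to $\bar h=g$: choose $z'\in\bar P$ and $h\in H$ with $g=z'(h)$ and $P\cap Pz'^{-1}=\{x\in P:x(g)\in H\}$. This is exactly the hypothesis of Lemma~\ref{S2} for the triple $(z',h,g)$, as one sees from the dictionary $P\cap Pz'^{-1}=\{x\in P:xz'\in P\}$ (since $x=pz'^{-1}$ with $p\in P$ means $xz'\in P$) and $\{x\in P:x(g)\in H\}=\{x\in P:x(z'(h))\in H\}=\{x\in P:xz'(h)\in H\}$. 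Hence Lemma~\ref{S2} gives
$$E\bar\rho(g)E \;=\; E\bar\rho(z')^*E\,\rho(h)E\,\bar\rho(z')E \;=\; \big(E\bar\rho(z')E\big)^*\big(E\rho(h)E\big)\big(E\bar\rho(z')E\big),$$
again using $E=E^*=E^2$. Here $E\bar\rho(z')E\neq 0$ (otherwise $E\bar\rho(g)E=0$), so Lemma~\ref{S3} expresses $E\bar\rho(z')E$, and therefore also its adjoint, as a finite product of $\rho(p)$'s and $\rho(p)^*$'s; and $E\rho(h)E=\rho(h)$ on $\ell^2(S)$, since right multiplication by $h\in S$ preserves $S$. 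Concatenating these products shows that $E\bar\rho((g,z))E$ is a finite product of operators $\rho(s)$ and $\rho(s)^*$ with $s\in S$, which is precisely the right Toeplitz condition for $S\subseteq\bar S$.

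The only genuinely delicate point is the dictionary identifying the two set equalities in \eqref{J-cond} with the single hypothesis of Lemma~\ref{S2}; everything else is bookkeeping — composing the factorizations produced by Lemmas~\ref{S1}--\ref{S3}, absorbing intermediate copies of $E$ via $E^2=E$, and reading every $\rho$ as an operator on $\ell^2(S)$. I do not expect any obstacle beyond making these identifications precise.
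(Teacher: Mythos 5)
Your proof is correct and follows essentially the same route as the paper's: decompose $(g,z)$ via Lemma~\ref{S1}, handle the $\bar P$-part with Lemma~\ref{S3}, and use \eqref{J-cond} to feed hypothesis \eqref{J1} into Lemma~\ref{S2} for the $\bar H$-part, with Lemma~\ref{S3} again disposing of the resulting $E\bar\rho(z')E$ factors. Your explicit verification of the dictionary $P\cap Pz'^{-1}=\{x\in P: xz'\in P\}$ and of the nonvanishing of the intermediate factors only spells out what the paper asserts in passing.
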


\begin{proof} This follows by combining Lemmas \ref{S1}, \ref{S2}, \ref{S3}.
In fact, let $(g,w)$ be an element of $\bar{S}$. Then by Lemma
\ref{S1} we have
$E\bar{\rho}((g,w))E=E\bar{\rho}(w)E\bar{\rho}(g)E$. By Lemma
\ref{S3}, $E\bar{\rho}(w)E$ is a product of operators of the form
$\rho (p)$ or $\rho (p)^*$, $p\in P$.

Let finally $z\in \bar{P}$ and $h\in H$ such that $g=z(h)$ and such
that $$\{x\in P:xz\in P\}\;=\;\{x\in P:x\,z(h)\in H\}$$ (this is
equivalent to $P \cap Pz^{-1} = \{x \in P:x(g) \in H\}$). Then, by
Lemma \ref{S2}, $E\bar{\rho} (g)E=E\bar{\rho} (z)^*E\rho
(h)E\bar{\rho} (z)E$. Moreover, by Lemma \ref{S3}, $E\bar{\rho}
(z)E$ and $E\bar{\rho} (z)^*E$ are also products of the desired
form.\end{proof}\mn

\subsection{$K$-theory for the right regular C*-algebra of a
semidirect product} Let us now compute the K-theory of $C^*_\rho
(H\rtimes P)$. We apply our general K-theoretic result switching
from right actions of $H \rtimes P$ to left actions of $(H \rtimes
P)^{op}$. To do so, we need to compute the orbits of the (right)
$\bar{H} \rtimes \bar{P}$-action on the family of constructible left
$H \rtimes P$-ideals in $\bar{H} \rtimes \bar{P}$. If $H \rtimes P
\subseteq \bar{H} \rtimes \bar{P}$ is right Toeplitz and has
independent constructible left ideals, then by Lemma \ref{lem-strongtoeplitz}
applied to $(H \rtimes P)^{op}$, we know that every
constructible left $H \rtimes P$-ideal in $\bar{H} \rtimes \bar{P}$
is in the orbit of a constructible left ideal of $H \rtimes P$. Thus
it suffices to consider constructible left ideals of $H \rtimes P$.
They are of the form $H \times X$ for some constructible left ideal
$X$ of $P$ (see Proposition \ref{lef}). For every such non-empty
$X$, the stabilizer $\{g \in \bar{H} \rtimes \bar{P} \;{:}\; (H \times X)
\cdot g = H \times X\}$ is given by
$$\bar{S}(X):=X^{-1}(H) \rtimes ({}_X
\bar{P})$$
 where $X^{-1}(H) = \{\bar{h} \in \bar{H}: x(\bar{h}) \in H
\,\mbox{for all}\; x \in X\}$ and ${}_X \bar{P} = \{\bar{p} \in
\bar{P} : X \bar{p} = X\}$.

Therefore, combining Corollary \ref{C1}, Proposition \ref{S4} and
Theorem \ref{thm-semiK}, we obtain

\begin{scorollary}\label{C2} Let $\alpha: (\bar{H} \rtimes \bar{P})^{op} \to
\Aut(A)$ be an action of $(\bar{H} \rtimes \bar{P})^{op}$ on a
C*-algebra $A$. Let $\cX$ be a set of constructible left ideals of
$H \rtimes P$ such that $\{H \times X\;{:}\; X \in \cX\}$ is a complete
system of representatives for the $\bar{H} \rtimes \bar{P}$-orbits
of the family of non-empty constructible left $H \rtimes P$-ideals
in $\bar{H} \rtimes \bar{P}$.

Assume that $P$ has independent constructible left ideals, that the
action of $P$ on $H$ satisfies the condition in Proposition \ref{S4}
and that $(\bar{H} \rtimes \bar{P})^{op}$ satisfies the Baum-Connes
conjecture with coefficients. Then we have a canonical isomorphism
 $$K_*(A \rtimes_{\alpha,r} (H \rtimes P)^{op})\quad\cong\quad
  \bigoplus_{X \in \cX} K_*(A \rtimes_{\alpha,r}\bar{S}(X)^{op}).$$
   In particular, we obtain the following
K-theoretic formula for the right regular C*-algebra
$C^*_{\rho}(P)$:
$$K_*(C^*_{\rho}(H \rtimes P))\quad\cong\quad
\bigoplus_{X \in \cX} K_*(C^*_\lambda(\bar{S}(X))).
$$

 \end{scorollary}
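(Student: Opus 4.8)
The plan is to derive Corollary~\ref{C2} as a direct application of \thmref{thm-semiK} to the opposite semigroup $(H\rtimes P)^{op}$ sitting inside the group $G:=(\bar H\rtimes\bar P)^{op}$. First I would translate the right regular picture of $S:=H\rtimes P$ on $\ell^2(S)$ into the left regular picture of $S^{op}$ on $\ell^2(S^{op})$: under this identification the right regular representation $\rho$ of $S$ becomes the left regular representation of $S^{op}$, the constructible left ideals of $S$ become the constructible right ideals of $S^{op}$, and, for $A=\CC$ with the trivial action, $\CC\rtimes_{\id,r}S^{op}=C^*_\lambda(S^{op})=C^*_\rho(S)$. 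Likewise the right $\bar S$-action (where $\bar S:=\bar H\rtimes\bar P$) on constructible left $S$-ideals in $\bar S$ becomes the left $G$-action on constructible right $S^{op}$-ideals in $G$, with the same orbits and with stabilizers the opposites of the original ones.

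Next I would check the three hypotheses of \thmref{thm-semiK} for $S^{op}\subseteq G$ and the given action $\alpha$. For the $K$-theoretic Toeplitz condition: the standing assumption that $P$ is left Ore together with the condition of \proref{S4} (which is part of the hypotheses, in particular condition \eqref{J-cond}) places us in the situation of \proref{S4}, so $S\subseteq\bar S$ satisfies the right Toeplitz condition; by the definition recalled in this section this is precisely the ordinary Toeplitz condition for $S^{op}\subseteq\bar S^{op}$, which implies the weak and hence the $K$-theoretic Toeplitz condition by the implications recorded right after Definition~\ref{def-toeplitz}. For independence: since $P$ has independent constructible left ideals, Corollary~\ref{C1} gives that $S$ does too, i.e. $S^{op}$ has an independent family of constructible right ideals; combined with the Toeplitz condition just obtained, \lemref{lem-strongtoeplitz}(1) shows that the family of constructible right $S^{op}$-ideals in $G$ is independent as well. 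The Baum--Connes requirement is exactly the assumed validity of the Baum--Connes conjecture with coefficients for $G=(\bar H\rtimes\bar P)^{op}$, specialized to the two coefficient algebras appearing in hypothesis (3) of \thmref{thm-semiK}. (One should also observe that \thmref{thm-semiK} needs $A$ separable and the enveloping group countable, which here forces $H$ and $P$, hence everything in sight, to be countable.)

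With the hypotheses verified, \thmref{thm-semiK} provides a canonical isomorphism $\bigoplus_{[Y]} K_*(A\rtimes_{\alpha,r}G_Y)\cong K_*(A\rtimes_{\alpha,r}S^{op})$, the sum running over the $G$-orbits of nonempty constructible right $S^{op}$-ideals in $G$ and $G_Y$ the stabilizer of $Y$. It remains to identify this orbit set with $\cX$ and the stabilizers with $\bar S(X)^{op}$. By \lemref{lem-strongtoeplitz}(3) every nonempty constructible left $S$-ideal in $\bar S$ lies in the $\bar S$-orbit of a constructible left ideal of $S$, and by \proref{lef} those are exactly the sets $H\times I$ with $I$ a constructible left ideal of $P$; the hypothesis on $\cX$ says precisely that $\{H\times X:X\in\cX\}$ is a transversal for these orbits, so $X\mapsto H\times X$ induces a bijection of $\cX$ onto the orbit set. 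Moreover, the stabilizer of $H\times X$ under the right $\bar S$-action was computed in the discussion preceding the statement to be $\bar S(X)=X^{-1}(H)\rtimes({}_X\bar P)$, whence the stabilizer of the corresponding constructible right $S^{op}$-ideal under the left $G$-action is $\bar S(X)^{op}$. Substituting yields the first displayed isomorphism. For the last formula, take $A=\CC$ with the trivial action: the left-hand side becomes $C^*_\rho(H\rtimes P)$ as above, and each summand becomes $C^*_r(\bar S(X)^{op})$, which is isomorphic to $C^*_r(\bar S(X))=C^*_\lambda(\bar S(X))$ via the group isomorphism $g\mapsto g^{-1}$, legitimate since $\bar S(X)$ is a group.

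I expect the only genuine work to be the systematic bookkeeping of the passage to opposite objects — confirming that the right Toeplitz condition, independence of constructible left ideals, and the orbit/stabilizer data of $H\rtimes P$ in $\bar H\rtimes\bar P$ transport verbatim into the statements about $S^{op}\subseteq G$ that \thmref{thm-semiK} consumes — together with a careful reading that the hypotheses of Corollary~\ref{C2} really supply everything \proref{S4} needs (both the right Toeplitz property of $P$, which should be forced by left Ore, and condition \eqref{J-cond} on the action). All the substantive analytic content is already packaged in \proref{S4}, \lemref{lem-strongtoeplitz} and \thmref{thm-semiK}, so beyond these translations the corollary is just their assembly.
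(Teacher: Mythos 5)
Your proposal is correct and follows essentially the same route as the paper: the paper's own proof is precisely the assembly of Corollary~\ref{C1}, Proposition~\ref{S4} (right Toeplitz condition for $S\subseteq\bar S$, i.e.\ the Toeplitz condition for $S^{op}\subseteq\bar S^{op}$), \lemref{lem-strongtoeplitz} applied to $S^{op}$, and \thmref{thm-semiK}, together with the orbit/stabilizer identification via Proposition~\ref{lef} that you carry out. The one inaccuracy is your parenthetical that the right Toeplitz condition for $P\subseteq\bar P$ ``should be forced by left Ore'': the left Ore condition $\bar P=P^{-1}P$ yields the \emph{left} Toeplitz condition, not the right one (they coincide for the abelian $P$ appearing in the later applications), so this hypothesis of \proref{S4} has to be read as part of ``the condition in Proposition~\ref{S4}'' assumed in the corollary --- which is how the paper itself treats it.
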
\mn

\subsection{Right and left C*-algebras for semidirect products by $\Nz$}
To deduce the right Toeplitz condition for $H \rtimes P \subseteq
\bar{H} \rtimes \bar{P}$, we used in Proposition \ref{S4} condition
(\ref{J-cond}) which says: for every $\bar{h} \in \bar{H}$ there
exists $z\in \bar{P}$ and $h \in H$ such that $\bar{h} = z(h)$ and
$P \cap Pz^{-1} = \{x \in P:x(\bar{h})
  \in H\}$. Note that the set $\{x \in P:x(\bar{h}) \in H\}$ is always a left
ideal of $P$. Moreover, we have:
\begin{slemma} \label{principal}
\eqref{J-cond} holds if for every $\bar{h}$ in $\bar{H}$, the left
ideal $\{x \in P:x(\bar{h}) \in H\}$ is principal, i.e. of the form
$Pp$ for some $p \in P$. \end{slemma}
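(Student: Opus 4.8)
The plan is to prove the implication by producing, for each $\bar h \in \bar H$, an explicit pair $(z,h) \in \bar P \times H$ satisfying the two requirements of \eqref{J-cond}, built directly from a generator of the principal left ideal $L_{\bar h} := \{x \in P : x(\bar h) \in H\}$.

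First I would record that the action of $P$ on $H$ by injective endomorphisms extends canonically to an action of the enveloping group $\bar P = P^{-1}P$ on $\bar H = P^{-1}H$ by automorphisms. Concretely, inside the ambient group $\bar S = \bar H \rtimes \bar P$ conjugation by $(e,p)$ maps the subgroup $\bar H$ to itself and restricts on $H$ to the given endomorphism $p$; hence $p^{-1} \in \bar P$ acts on $\bar H$ as the inverse automorphism, so that $p^{-1}(p(\bar h)) = \bar h$ for all $\bar h \in \bar H$, while $p(\bar h) \in H$ if and only if $p \in L_{\bar h}$.

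Next, using the hypothesis I would write $L_{\bar h} = Pp$ for some $p \in P$. Since $e \in P$ we have $p = ep \in Pp = L_{\bar h}$, so $h := p(\bar h)$ lies in $H$. I would then set $z := p^{-1} \in \bar P$. With these choices $z(h) = p^{-1}(p(\bar h)) = \bar h$, which is the first clause of \eqref{J-cond}. For the second clause, observe $z^{-1} = p$ and $Pp \subseteq P$, so
\[
P \cap Pz^{-1} = P \cap Pp = Pp = L_{\bar h} = \{x \in P : x(\bar h) \in H\},
\]
as required.

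There is no real obstacle here; the argument is a direct verification. The only point that deserves a sentence of care is the passage from the $P$-action on $H$ to a $\bar P$-action on $\bar H$ (so that $z = p^{-1}$ makes sense and inverts $p$), which is immediate once one works inside the group $\bar S = \bar H \rtimes \bar P$. One should also note explicitly that the hypothesis supplies $Pp$ as the \emph{entire} left ideal $L_{\bar h}$, not merely a generator up to invertible elements, since it is precisely this that makes the equality $P \cap Pz^{-1} = L_{\bar h}$ hold on the nose rather than just up to a left multiple.
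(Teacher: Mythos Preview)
Your proof is correct and follows exactly the same approach as the paper: from $L_{\bar h}=Pp$ one takes $h=p(\bar h)\in H$ and $z=p^{-1}$, then checks $z(h)=\bar h$ and $P\cap Pz^{-1}=P\cap Pp=Pp=L_{\bar h}$. The only difference is that you spell out the justification that $z=p^{-1}$ acts on $\bar H$ and that $Pp\subseteq P$, which the paper leaves implicit.
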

 \begin{proof} If $\{x \in P:x(\bar{h})
\in H\} = Pp$, then $p$ itself satisfies $p(\bar{h}) \in H$. Thus
there exists $h \in H$ such that $\bar{h} = p^{-1}(h)$, and setting
$z = p^{-1}$, we see that \eqref{J-cond} is satisfied. \end{proof} In
general, it is not clear which left ideals of $P$ arise as sets of
the form $\{x \in P:x(\bar{h}) \in H\}$ for some $\bar{h} \in
\bar{H}$. So in general, we can only deduce the following
\begin{scorollary}
Assume that all non-empty left ideals of $P$ are principal. Then for
every action of $P$ on some group $H$, condition~\eqref{J-cond}
holds true. In particular, \eqref{J-cond} holds for every
$\Nz$-action. \end{scorollary}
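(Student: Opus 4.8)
The plan is to deduce this corollary directly from Lemma~\ref{principal}, so that essentially no new work is needed beyond a non-emptiness check.

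Fix $\bar h \in \bar H$ and consider the set $I_{\bar h} := \{x \in P : x(\bar h) \in H\}$. First I would observe that $I_{\bar h}$ is a left ideal of $P$: if $x \in I_{\bar h}$ and $q \in P$, then $(qx)(\bar h) = q\big(x(\bar h)\big) \in q(H) \subseteq H$, since $q$ acts on $H$ by an endomorphism, so $qx \in I_{\bar h}$. Next I would check that $I_{\bar h}$ is non-empty. This is exactly the content of the inductive-limit description $\bar H = P^{-1}H = \varinjlim_{p \in P} H$: every element of $\bar H$ is of the form $q^{-1}(h_0)$ for some $q \in P$ and $h_0 \in H$, and for such a representative $q(\bar h) = h_0 \in H$, so $q \in I_{\bar h}$.

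By the hypothesis that all non-empty left ideals of $P$ are principal, $I_{\bar h}$ is principal. Since $\bar h$ was arbitrary, Lemma~\ref{principal} applies and yields condition~\eqref{J-cond}. For the last sentence it remains only to note that every non-empty left ideal $I$ of $\Nz$ is principal: setting $m := \min I$ we get $I = m + \Nz$, which is the principal left ideal generated by $m$, because $I$ is closed under adding arbitrary elements of $\Nz$ while every element of $I$ is at least $m$. Hence the hypothesis holds for $P = \Nz$ and the claim follows. I do not expect any genuine obstacle here; the only subtlety is remembering to verify non-emptiness of $I_{\bar h}$, so that ``principal'' (rather than ``principal or empty'') is the correct hypothesis to feed into Lemma~\ref{principal}, and this non-emptiness is immediate from the way $\bar H$ is built.
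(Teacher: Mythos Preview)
Your proof is correct and follows exactly the approach implicit in the paper, which states the corollary without explicit proof as an immediate consequence of Lemma~\ref{principal} and the preceding observation that $\{x \in P : x(\bar h) \in H\}$ is always a left ideal. Your explicit verification of non-emptiness (via the description $\bar H = P^{-1}H$) and of principality of left ideals in $\Nz$ just spells out what the paper leaves to the reader.
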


\begin{slemma}\label{S5} Assume that the (additive) semigroup $\Nz$ acts
by injective endomorphisms $\alpha_n$ on the group $H$. The set of
constructible right ideals in $H\rtimes \Nz$  coincides with the set
of principal ideals. The principal right ideals are exactly the
subsets of the form $h\alpha_n (H)\times (n)$ with $h\in
H/\alpha_n(H)$, where $(n)=\{n+k:k\in \Nz\}$ denotes the principal
ideal generated by $n$ in $\Nz$.\end{slemma}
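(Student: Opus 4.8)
Write $S=H\rtimes\Nz$, so that $\cJ_S$ denotes the family of constructible right ideals of the semigroup $S$ in the sense of Definition~\ref{def-ideals}. The plan is to compute the principal right ideals of $S$ explicitly, note that they are all constructible, and conversely show that the family $\cF$ consisting of $\emptyset$ together with all principal right ideals is stable under the two operations $X\mapsto sX$ and $X\mapsto s^{-1}X=\{y\in S:sy\in X\}$; minimality of $\cJ_S$ then forces $\cJ_S=\cF$.

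Using $(h_1,n_1)(h_2,n_2)=(h_1\alpha_{n_1}(h_2),n_1+n_2)$ together with $\alpha_m\circ\alpha_n=\alpha_{m+n}$ and $\alpha_0=\id_H$, I would first compute, for $(h,n)\in S$,
\[
 (h,n)S=\{(h\alpha_n(h'),\,n+k):h'\in H,\ k\in\Nz\}=\big(h\alpha_n(H)\big)\times(n),
\]
the point being that for each fixed value $\ell\ge n$ of the second coordinate the first coordinate sweeps out the whole coset $h\alpha_n(H)$. Taking $(h,n)=(e,0)$ shows that $S$ itself is principal; and comparing first the $\Nz$-supports and then the cosets of two generators gives $(h_1,n_1)S=(h_2,n_2)S$ iff $n_1=n_2$ and $h_1^{-1}h_2\in\alpha_{n_1}(H)$. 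This already proves the second assertion: the principal right ideals are exactly the sets $h\alpha_n(H)\times(n)$, $n\in\Nz$, with $h$ running through $H/\alpha_n(H)$.

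Now $\cF=\{\emptyset\}\cup\{h\alpha_n(H)\times(n):n\in\Nz,\ h\in H\}$. Since $\emptyset,S\in\cF$ and each $h\alpha_n(H)\times(n)=(h,n)\cdot S$ lies in $\cJ_S$, we have $\cF\subseteq\cJ_S$. For the reverse inclusion it suffices to check that $\cF$ is closed under $X\mapsto sX$ and $X\mapsto s^{-1}X$ for $s=(g,m)\in S$. The first is an immediate substitution:
\[
 s\cdot\big(h\alpha_n(H)\times(n)\big)=\big(g\alpha_m(h)\,\alpha_{m+n}(H)\big)\times(m+n)\in\cF.
\]
For the second, $(g\alpha_m(h''),m+n'')$ lies in $h\alpha_n(H)\times(n)$ precisely when $m+n''\ge n$ and $\alpha_m(h'')\in g^{-1}h\,\alpha_n(H)$. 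Distinguishing the cases $n\ge m$ — where $\alpha_n(H)=\alpha_m(\alpha_{n-m}(H))\subseteq\alpha_m(H)$ — and $n\le m$ — where $\alpha_m(H)\subseteq\alpha_n(H)$ — and using injectivity of $\alpha_m$ to convert a membership condition on $\alpha_m(h'')$ into one on $h''$, one finds that the set of admissible $h''$ is empty unless $g^{-1}h\in\alpha_{\min(m,n)}(H)$, in which case it is a single coset of $\alpha_{(n-m)^+}(H)$; together with the constraint $n''\ge(n-m)^+$ this shows that $s^{-1}\big(h\alpha_n(H)\times(n)\big)$ is either $\emptyset$ or of the form $h'\alpha_{(n-m)^+}(H)\times\big((n-m)^+\big)$ for a suitable $h'\in H$, hence lies in $\cF$. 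Therefore $\cJ_S=\cF$, and since $\cF\setminus\{\emptyset\}$ is the set of principal right ideals, this is the first assertion. (One checks just as easily that $\cF$ is closed under finite intersections, consistent with the general fact, recalled after Definition~\ref{def-ideals}, that $\cJ_S$ is intersection-closed.)

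The only mildly delicate step is the case analysis in the computation of $s^{-1}X$: one must keep track of the descending chain of subgroups $H=\alpha_0(H)\supseteq\alpha_1(H)\supseteq\alpha_2(H)\supseteq\cdots$ and use injectivity of the endomorphisms to pass between cosets of $\alpha_m(H)$ and cosets of $\alpha_{n-m}(H)$. Everything else is a routine substitution into the semidirect-product multiplication.
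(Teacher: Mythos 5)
Your proof is correct and follows essentially the same route as the paper's: identify the principal right ideals as $(h,n)S=h\alpha_n(H)\times(n)$ and then verify that this family (together with $\emptyset$) is closed under $X\mapsto sX$ and $X\mapsto s^{-1}X$, the case split $n\le m$ versus $n\ge m$ in your computation of $s^{-1}X$ being exactly the paper's two displayed formulas for $(g,k)^{-1}(h\alpha_n(H)\times(n))$. Your write-up is somewhat more detailed (e.g.\ the explicit parametrization by $H/\alpha_n(H)$ and the minimality argument), but no new idea is involved.
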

 \begin{proof} The principal
right ideals in $H\rtimes \Nz$ are of the form $(h,n)(H\rtimes
\Nz)=h\alpha_n(H)\times (n)$. We show that the set of principal
ideals is closed under the operation $I\mapsto (g,k)^{-1}I$. One easily checks that

$$ \begin{array}{l} (g,k)^{-1}(h\alpha_n(H)\times (n))
=\left\{\begin{matrix}H\times\Nz & \mbox{if}\, g^{-1}h\in
\alpha_n(H),\\[2mm]\emptyset & \textrm{otherwise,}\end{matrix}\right.\\[7mm]
(g,k)^{-1}(h\alpha_n(H)\times (n))
=\left\{\begin{matrix}a\alpha_{n-k}(H)\times (n-k) &\mbox{if}\;
g^{-1}h=\alpha_k(a),\,a\in H,\\[2mm]
\emptyset &
\textrm{if}\; g^{-1}h\notin \alpha_k
(H),\end{matrix}\right.\end{array}$$
 depending if $k\geq n$ in the first case or $k\leq n$ in the second one.\end{proof}

\begin{stheorem}\label{T1} Let $\Nz$ act by injective endomorphisms on the
group $H$ {and assume that the enveloping group $\bar{H}\rtimes\bar{\NN}=\bar{H}\rtimes \ZZ$ satisfies the 
Baum-Connes conjecture with coefficients}. Let $C^*_\rho (H\rtimes \Nz)$ and $C^*_\lambda (H\rtimes
\Nz)$ denote the right and left regular C*-algebra of $H\rtimes
\Nz$, respectively. Then
\begin{itemize}
\item [(a)]  $K_*(C^*_\rho (H\rtimes \Nz))\cong
K_*(C^*_\lambda(H))$.
\item [(b)] $K_*(C^*_\lambda (H\rtimes \Nz))\cong K_*(C^*_\lambda(H))$.
\end{itemize}
In particular $C^*_\rho (H\rtimes \Nz)$ and $C^*_\lambda (H\rtimes
\Nz)$ have the same $K$-theory.\end{stheorem}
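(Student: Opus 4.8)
The plan is to compute $K_*(C^*_\rho(H\rtimes\NN))$ and $K_*(C^*_\lambda(H\rtimes\NN))$ separately, each by realizing the relevant C*-algebra inside a crossed product over the enveloping group $\bar H\rtimes\ZZ$ and invoking the $K$-theory formulas of \S\ref{sec-semigroup}--\S\ref{sec-left-right}; both answers will come out as $K_*(C^*_\lambda(H))$, and the final assertion is then immediate. Part (b) will use Theorem~\ref{principal-ideals}, part (a) the right-regular machinery of \S\ref{sec-left-right}.

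For (b), I would first check that $H\rtimes\NN$ is left Ore with enveloping group $\bar H\rtimes\ZZ$: a one-line computation gives $(h,n)^{-1}(h',m)=\big(\alpha_n^{-1}(h^{-1}h'),\,m-n\big)$, and as $(h,n),(h',m)$ vary these run over all of $\bar H\rtimes\ZZ$. Hence $H\rtimes\NN\subseteq\bar H\rtimes\ZZ$ satisfies the Toeplitz condition (Remark~\ref{rem-Ore}). By Lemma~\ref{S5} the constructible right ideals of $H\rtimes\NN$ are exactly the principal ones, so $\JP$ is independent, and by the discussion opening \S\ref{sec-quasi-lattices} (i.e.\ Lemma~\ref{lem-strongtoeplitz}) it follows that $\JG=\{g\cdot(H\rtimes\NN):g\in\bar H\rtimes\ZZ\}\cup\{\emptyset\}$. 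The unit group of $H\rtimes\NN$ is $H\times\{0\}\cong H$, which is the stabilizer of $H\rtimes\NN\in\IG$; since $\bar H\rtimes\ZZ$ satisfies Baum--Connes with coefficients and therefore so does its subgroup $H$, Theorem~\ref{principal-ideals} applies with $A=\CC$ (equivalently Corollary~\ref{cor-group}, the action of $\bar H\rtimes\ZZ$ on $\IG\cong(\bar H\rtimes\ZZ)/H$ being transitive) and gives $K_*(C^*_\lambda(H\rtimes\NN))\cong K_*(C^*_\lambda(H))$.

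For (a), I would pass to $(H\rtimes\NN)^{op}\subseteq(\bar H\rtimes\ZZ)^{op}$ and apply Corollary~\ref{C2} with $P=\NN$. The non-empty left ideals of $\NN$ are the principal ideals $n+\NN$, so $\NN$ has independent constructible left ideals; moreover, for each $\bar h\in\bar H$ the left ideal $\{x\in\NN:\alpha_x(\bar h)\in H\}$ is principal, so condition~\eqref{J-cond} holds by Lemma~\ref{principal}. Hence Proposition~\ref{S4} gives the right Toeplitz condition for $H\rtimes\NN\subseteq\bar H\rtimes\ZZ$ and Corollary~\ref{C1} gives independence of the constructible left ideals of $H\rtimes\NN$. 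By Proposition~\ref{lef} those ideals are precisely the sets $H\times(n+\NN)$, $n\geq 0$, and right translation by $(e,-n)$ carries $H\times(n+\NN)$ onto $H\times\NN$, so they all lie in one $\bar H\rtimes\ZZ$-orbit and one may take $\cX=\{\NN\}$. It remains to identify the stabilizer $\bar{S}(\NN)=\NN^{-1}(H)\rtimes({}_{\NN}\bar{P})$: since $\alpha_0=\id$, $\NN^{-1}(H)=\{\bar h\in\bar H:\alpha_n(\bar h)\in H\text{ for all }n\}=H$, and $\NN+\bar p=\NN$ forces $\bar p=0$, so $\bar{S}(\NN)=H\times\{0\}\cong H$. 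As $(\bar H\rtimes\ZZ)^{op}\cong\bar H\rtimes\ZZ$ satisfies Baum--Connes with coefficients, Corollary~\ref{C2} yields $K_*(C^*_\rho(H\rtimes\NN))\cong K_*(C^*_\lambda(\bar{S}(\NN)))=K_*(C^*_\lambda(H))$.

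Combining (a) and (b) gives $K_*(C^*_\rho(H\rtimes\NN))\cong K_*(C^*_\lambda(H))\cong K_*(C^*_\lambda(H\rtimes\NN))$, which is the final assertion. I do not expect a genuine obstacle: the proof is an assembly of the cited results, and the two points requiring care are the bookkeeping between $H\rtimes\NN$ and its opposite semigroup when invoking \S\ref{sec-left-right} --- in particular making sure the Baum--Connes hypothesis is available both for $(\bar H\rtimes\ZZ)^{op}$ and for the stabilizer $H$ --- and the verification of condition~\eqref{J-cond}, which is exactly where injectivity of the endomorphisms $\alpha_n$ and the principality of the left ideals of $\NN$ enter; the orbit and stabilizer computations themselves are short.
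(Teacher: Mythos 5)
Your proposal is correct and follows essentially the same route as the paper: part (a) via Proposition~\ref{lef} and Corollary~\ref{C2} with the orbit/stabilizer computation $\cX=\{\NN\}$, $\bar S(\NN)\cong H$, and part (b) via Lemma~\ref{S5}, independence and transitivity of the action on the principal ideals, with stabilizer $P^*=H$. The only cosmetic difference is that in (b) you invoke Theorem~\ref{principal-ideals} (itself a specialization of Theorem~\ref{thm-semiK}) and spell out the left Ore/Toeplitz verification that the paper leaves implicit, which does not change the argument.
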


\begin{proof} (a) is an immediate consequence of the description of the
constructible left ideals of $H\rtimes \Nz$ in Proposition \ref{lef}
together with the formula for $K_*$ in Corollary \ref{C2}.

(b) The set $\cJ$ of constructible right ideals in $H\rtimes \Nz$ is
described in Lemma \ref{S5}. It is obviously independent. Every
ideal in $\cJ$ has full orbit under the action of $\bar{H}\rtimes
\Zz$ and the stabilizer group of the ideal $H\rtimes \Nz$ is $H$.
Thus the assertion follows from Theorem \ref{thm-semiK}.\end{proof}

It is by no means obvious that $C^*_\rho (H\rtimes \Nz)$ and
$C^*_\lambda (H\rtimes \Nz)$ should have the same $K$-theory. In
fact, as C*-algebras they look very different from each other. For
instance in the situation of the following remark ($H$ abelian and
$H/\vp (H)$ finite) $C^*_\rho (H\rtimes \Nz)$ admits non-trivial
one-dimensional representations (see \cite{Li-am}) while every
non-zero quotient of $C^*_\lambda (H\rtimes \Nz)$ contains a
non-trivial isometry and therefore $C^*_\lambda (H\rtimes \Nz)$
admits no non-trivial finite-dimensional representations.

\begin{sremark} Consider the special case where $H$ is abelian and $\Nz$
acts via the injective endomorphism $\vp$ on $H$. Assume also that
$H/\vp (H)$ is finite and that $\bigcap_{n\geq 0}\vp^n(H)=\{0\}$.
Then the left regular C*-algebra $C^*_\lambda(H\rtimes \Nz)$ has the
algebra $\cA [\vp]$ studied in \cite{CV} as natural quotient. In
\cite{CV} it was shown that the $K$-theory of $\cA [\vp]$ is
determined by a six term exact sequence of the form
\bgl\label{exPV}\xymatrix{K_*C^*(H)\ar[r]^{1-b(\vp)}&
K_*C^*(H)\ar[r]&K_*\Af\ar@/^5mm/[ll]}\egl\mn On the other hand, we
know by Theorem \ref{T1} that $K_*(C^*_\lambda(H\rtimes \Nz))\cong
K_*(C^*(H))$. It can be shown that the long exact sequence
associated with the extension
$$ 0\to \Ker \pi \to C^*(H\rtimes \Nz)
\stackrel{\pi}{\longrightarrow} \cA [\vp]\to 0$$ is exactly the
exact sequence in (\ref{exPV}).\end{sremark}\mn

\subsection{Right and left C*-algebras for $ax+b$-semigroups of Dedekind rings}
Recall that a Dedekind ring is a noetherian integrally closed
integral domain in which every non-zero prime ideal is maximal. The
prime example of a Dedekind ring is the ring of algebraic integers
in a number field. If $R$ is a Dedekind ring, its $ax+b$-semigroup
is, by definition, the semidirect product $R\rtimes R^\times$ where
$R^\times =R\backslash \{0\}$ denotes the multiplicative semigroup
of the ring and $R$ (by abuse of notation) its additive group.

\begin{sproposition} Let $R$ be a Dedekind domain with field of fractions $Q$.
Then the inclusion of $ax+b$-semigroups $R\rtimes R^\times\subseteq
Q\rtimes Q^\times$ satisfies the right Toeplitz condition.\end{sproposition}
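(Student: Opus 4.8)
The plan is to derive this from Proposition~\ref{S4}. Write the $ax+b$-semigroup $S=R\rtimes R^\times$ as $H\rtimes P$ with $H=R$ (the additive group of $R$) and $P=R^\times$ acting on $R$ by multiplication; these are injective endomorphisms since $R$ is a domain, and $P$ has unit $1$. As $R^\times$ is commutative and cancellative it satisfies the left Ore condition with group of fractions $Q^\times$, and $\bar H=(R^\times)^{-1}R=Q$, so the enveloping group is $\bar S=\bar H\rtimes\bar P=Q\rtimes Q^\times$; thus the inclusion in the statement is exactly the inclusion $S\subseteq\bar S$ to which Proposition~\ref{S4} applies. It then suffices to check the two hypotheses of that proposition: (a) $P$ satisfies the right Toeplitz condition, and (b) the lifting condition \eqref{J-cond} for the action of $P$ on $H$. (Note that $S$ is left Ore but, as recalled in the introduction, $S^{op}$ is not, so the right Toeplitz condition for $S\subseteq\bar S$ does not follow from Remark~\ref{rem-Ore} applied to $S^{op}$; Proposition~\ref{S4} is genuinely needed.)

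For (a): since $R^\times$ is commutative cancellative it is left Ore inside $Q^\times$, so by Remark~\ref{rem-Ore} the inclusion $R^\times\subseteq Q^\times$ satisfies the Toeplitz condition, and because $R^\times$ and $Q^\times$ are commutative this is the same as the right Toeplitz condition required in Lemma~\ref{S3} and Proposition~\ref{S4}. This step is immediate.

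For (b): given $\bar h\in\bar H=Q$ I must produce $z\in\bar P=Q^\times$ and $h\in H=R$ with $\bar h=z(h)$ and $P\cap Pz^{-1}=\{x\in P: x(\bar h)\in H\}$. The point is that the $\bar P$-action on $\bar H$ is honest multiplication inside the field $Q$, so when $\bar h\neq0$ one can simply take $h=1$ and $z=\bar h$ (and $h=0$, $z=1$ when $\bar h=0$): then $z(h)=\bar h\cdot1=\bar h$, and for every $x\in R^\times$ one has $x(\bar h)=x\bar h=xz$, so the two sets $P\cap Pz^{-1}=\{x\in R^\times: xz\in R^\times\}$ and $\{x\in R^\times: x\bar h\in R\}$ coincide. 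Hence \eqref{J-cond} holds, and Proposition~\ref{S4} gives that $R\rtimes R^\times\subseteq Q\rtimes Q^\times$ satisfies the right Toeplitz condition.

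I expect no serious obstacle. The only subtlety is that one should not try to verify \eqref{J-cond} through the sufficient condition of Lemma~\ref{principal}, i.e. principality of the left ideal $\{x\in P: x(\bar h)\in H\}$: for a general $\bar h\in Q$ this ideal of $R^\times$ is the denominator ideal of $\bar h$, which over a Dedekind ring with nontrivial ideal class group is typically non-principal. Principality is not needed, though; it is the multiplicativity of the action that makes \eqref{J-cond} automatic for the trivial choice of $z$ and $h$.
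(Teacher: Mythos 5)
Your proof is correct and follows essentially the same route as the paper: apply Proposition~\ref{S4} with $H=R$, $P=R^\times$, note that commutativity turns the (left) Toeplitz condition for the Ore inclusion $R^\times\subseteq Q^\times$ into the right Toeplitz condition, and verify \eqref{J-cond} by taking $h=1$, $z=\bar h$ for $\bar h\neq 0$. Your closing remark that one should not route this through Lemma~\ref{principal} (since the denominator ideal of $\bar h$ need not be principal when the class group is nontrivial) is a correct and worthwhile observation, but the argument itself coincides with the paper's.
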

\begin{proof} We apply Proposition \ref{S4} with $H=R$, $P=R^\times$.
Since the inclusion $R^\times\subseteq Q^\times$ satisfies the left
Toeplitz condition, by commutativity it also satisfies the right
condition. Moreover, given $0\neq \bar{h}\in Q$, choose $h=1$ and
$z=\bar{h}$. These elements obviously have the properties required
in Proposition \ref{S4}.\end{proof}

\begin{stheorem}  Let $R^*$ be the group of units in $R$ and choose for every
ideal class $\gamma \in Cl_{Q(R)}$ an ideal $I_{\gamma}$ of $R$
which represents $\gamma$. The $K$-theory of the right regular
C*-algebra $C^*_\rho (R\rtimes R^\times )$ is given by the formula
$$
  K_*(C^*_{\rho} (R \rtimes R^\times))\cong \bigoplus_{\gamma
  \in Cl_{Q(R)}} K_*(C^*_\lambda(I_{\gamma}^{-1} \rtimes R^*)).
$$ Here we use the notation, familiar from number theory, $$I_\gamma^{-1}
=\{x\in Q:xy\in R,\; \forall y\in I_\gamma\}$$ for the fractional
ideal $I_\gamma^{-1}$ in the quotient field $Q$ of $R$ (it satisfies
$I_\gamma^{-1}I_\gamma =R$). \end{stheorem}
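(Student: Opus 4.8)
The plan is to apply Corollary~\ref{C2} with $H=R$, $P=R^\times$ (so that $\bar{H}=Q$, $\bar{P}=Q^\times$ and $\bar{H}\rtimes\bar{P}=Q\rtimes Q^\times$) and trivial coefficient algebra $A=\CC$, and then to make the resulting orbit and stabilizer data explicit in terms of the ideal theory of $R$. First I would check the three hypotheses of Corollary~\ref{C2}. Independence of the constructible left ideals of $P=R^\times$ holds because $R^\times$ is commutative, so left and right constructible ideals coincide, and the constructible right ideals of $R^\times$ are known to form an independent family (as used in \cite{CEL}); Corollary~\ref{C1} then gives independence of the constructible left ideals of $R\rtimes R^\times$ as well. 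The hypotheses of Proposition~\ref{S4} for the action of $R^\times$ on $R$ --- the right Toeplitz condition for $R^\times\subseteq Q^\times$ together with condition~\eqref{J-cond} --- were verified in the Proposition immediately preceding this theorem (one takes $h=1$ and $z=\bar{h}$ for $0\neq\bar{h}\in Q$). Finally $Q\rtimes Q^\times$ is metabelian, hence amenable, so $(Q\rtimes Q^\times)^{op}$ is amenable as well and therefore satisfies the strong Baum--Connes conjecture with arbitrary coefficients by \cite{HK}; in particular the Baum--Connes hypothesis in Corollary~\ref{C2} holds.

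With these checks in place, Corollary~\ref{C2} yields an isomorphism
$$K_*\big(C^*_\rho(R\rtimes R^\times)\big)\;\cong\;\bigoplus_{X\in\cX}K_*\big(C^*_\lambda(\bar{S}(X))\big),$$
where $\cX$ is a complete set of representatives for the $Q\rtimes Q^\times$-orbits on the nonempty constructible left $(R\rtimes R^\times)$-ideals in $Q\rtimes Q^\times$, and $\bar{S}(X)=X^{-1}(R)\rtimes({}_X\bar{P})$ is the stabilizer of $R\times X$. The substantive part of the argument is to identify $\cX$ and the groups $\bar{S}(X)$. By Proposition~\ref{lef} the constructible left ideals of $R\rtimes R^\times$ are exactly the sets $R\times X$ with $X$ a constructible left ideal of $R^\times$, and a standard computation with the Dedekind valuation data (as in \cite[\S 8]{CEL}) identifies these $X$ with the sets $\mathfrak{a}^\times:=\mathfrak{a}\setminus\{0\}$ for $\mathfrak{a}$ a nonzero integral ideal of $R$. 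Since $(R\rtimes R^\times)^{op}\subseteq(Q\rtimes Q^\times)^{op}$ is Toeplitz and has independent constructible right ideals, Lemma~\ref{lem-strongtoeplitz} applied to the opposite semigroup shows that every constructible left $(R\rtimes R^\times)$-ideal in $Q\rtimes Q^\times$ lies in the $Q\rtimes Q^\times$-orbit of some $R\times\mathfrak{a}^\times$, so it only remains to decide when $R\times\mathfrak{a}^\times$ and $R\times\mathfrak{b}^\times$ lie in the same orbit. Writing out the right translation $(R\times X)\cdot(g_1,g_2)=\{(u+xg_1,\,xg_2):u\in R,\ x\in X\}$, one sees that, up to this orbit equivalence, the result depends only on the fractional ideal $g_2\mathfrak{a}$ and in fact only on its ideal class, the ``$g_1$''-contribution being absorbed by a translation. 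Hence the orbits are indexed by $Cl_{Q(R)}$, and one may take $\cX=\{I_\gamma^\times:\gamma\in Cl_{Q(R)}\}$ for the chosen representatives $I_\gamma$.

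For the stabilizer of $R\times I_\gamma^\times$ one reads off directly from the definitions that ${}_X\bar{P}=\{\bar{p}\in Q^\times:\bar{p}I_\gamma=I_\gamma\}=R^*$ and $X^{-1}(R)=\{\bar{h}\in Q:I_\gamma\bar{h}\subseteq R\}=I_\gamma^{-1}$, the inverse fractional ideal. Thus $\bar{S}(I_\gamma^\times)=I_\gamma^{-1}\rtimes R^*$, which is a group, so $C^*_\lambda(\bar{S}(I_\gamma^\times))=C^*_\lambda(I_\gamma^{-1}\rtimes R^*)$, and substituting back into the displayed isomorphism gives
$$K_*\big(C^*_\rho(R\rtimes R^\times)\big)\;\cong\;\bigoplus_{\gamma\in Cl_{Q(R)}}K_*\big(C^*_\lambda(I_\gamma^{-1}\rtimes R^*)\big),$$
as claimed. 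I expect the \emph{main obstacle} to be the orbit analysis of the second paragraph: one must describe the constructible left $(R\rtimes R^\times)$-ideals in $Q\rtimes Q^\times$ precisely enough to see that the ideal class of the multiplicative part is the sole orbit invariant while the additive part can always be normalised away --- essentially the fractional-ideal bookkeeping that appears on the left-regular side in \cite{CEL}, transposed to the opposite semigroup.
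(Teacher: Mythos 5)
Your proposal is correct and follows essentially the same route as the paper: apply Corollary~\ref{C2} with $H=R$, $P=R^\times$, using Proposition~\ref{lef} and the results of \cite{CEL} to identify the constructible left ideals with the sets $I^\times$ for ring ideals $I$, the right Toeplitz condition from the preceding proposition, amenability of $Q\rtimes Q^\times$ for Baum--Connes, the class group as the orbit space, and the stabilizer formula $\bar{S}(X)=X^{-1}(H)\rtimes({}_X\bar{P})=I_\gamma^{-1}\rtimes R^*$ from the discussion before Corollary~\ref{C2}. The only difference is that you spell out the orbit and stabilizer computations in somewhat more detail than the paper, which simply cites the general discussion.
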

\begin{proof} By Proposition
\ref{lef}, the constructible left ideals of $R\rtimes R^\times$ are
in bijection with the constructible ideals of $R^\times$. These
ideals are exactly of the form $I^\times$ where $I$ is a ring ideal
in $R$, see \cite{CEL}. The orbits under the action of the
enveloping group of $R^\times$ are labeled by the elements $\gamma$
of the class group $Cl_{Q(R)}$. According to the discussion before
Corollary \ref{C2} the stabilizer group for such an element $\gamma$
is $I_\gamma^{-1}\rtimes R^*$. The assertion now follows from
Corollary \ref{C2}.\end{proof}

In particular, comparing with the result obtained in \cite{CEL}, we
see that the left and right regular C*-algebras of $R \rtimes
R^\times$ are $KK$-equivalent (they both are $KK$-equivalent to
$\bigoplus_{\gamma} C_\lambda^*(I_{\gamma}^{-1}\rtimes R^*)$).
 Again, this is by no means obvious
since $C^*_{\rho} (R \rtimes R^\times)$ and $C^*_{\lambda} (R
\rtimes R^\times)$ are quite different (again the first algebra
admits non-trivial one-dimensional representations while by
\cite{CDL} the second one admits a largest ideal (which contains any
other non-trivial ideal) with a simple quotient (the ring C*-algebra
of \cite{CuLi})).

\subsection{Wreath products}
We here discuss another important class of specific semidirect
products, so-called wreath products. For this we take a left Ore
semigroup $P$, a group $\Gamma$ with unit $e$ and form
$$
  \Gamma_\infty^{ P} = \bigoplus_{x \in P} \Gamma = \{f: P \to \Gamma \ : \ f(x) = e \text{ for almost all } x \in P\}.
$$
$P$ acts on $\Gamma_\infty^P$ by shifting from the left, i.e.
$p(f)(x) = e$ if $x \notin pP$ and $p(f)(x) = f(p^{-1}x)$ if $x \in
pP$. Let $\Gamma \wr P = \Gamma_\infty^P \rtimes P$ be the
semidirect product attached to this action of $P$ on
$\Gamma_\infty^P$. The semigroup $\Gamma \wr P$ is in a canonical
way a subsemigroup of $\Gamma \wr \bar{P}$ (with $\bar{P} = P^{-1}
P$).

We first consider the left regular representation. Let $\cJ_P$,
$\cJ_{\Gamma \wr P}$ be the families of constructible right ideals
in $P$, $\Gamma \wr P$, respectively. Then we have
\begin{slemma}
$\cJ_{\Gamma \wr P} = \{(f \cdot (\Gamma_\infty^X)) \times X:f \in
\Gamma_\infty^P, X \in \cJ_P\}$. \end{slemma}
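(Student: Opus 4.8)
The plan is to prove the two inclusions separately, exploiting in each case the ``smallest family'' characterisation of constructible right ideals (Definition \ref{def-ideals}). Write $\cF$ for the family $\{(f \cdot \Gamma_\infty^X) \times X \, : \, f \in \Gamma_\infty^P,\ X \in \cJ_P\}$, where $\Gamma_\infty^X \subseteq \Gamma_\infty^P$ is the subgroup of functions supported in $X$, so that $f \cdot \Gamma_\infty^X$ is a left coset. Note that $\emptyset \in \cF$ (take $X = \emptyset$) and $\Gamma \wr P = (e \cdot \Gamma_\infty^P) \times P \in \cF$. Since $\cJ_{\Gamma \wr P}$ is the smallest family of subsets of $\Gamma \wr P$ that contains $\emptyset$ and $\Gamma \wr P$ and is closed under $Y \mapsto sY$ and $Y \mapsto s^{-1}Y$ for $s \in \Gamma \wr P$, I would obtain $\cJ_{\Gamma \wr P} \subseteq \cF$ by showing that $\cF$ is closed under these two operations, and the reverse inclusion by showing that every member of $\cF$ is constructible.

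The computations driving both inclusions concern the effect of the two operations on a set $Y = (f \cdot \Gamma_\infty^X) \times X$. First, for $s = (g,p)$, using that the endomorphism $q \mapsto p(q)$ of $\Gamma_\infty^P$ carries $\Gamma_\infty^X$ onto $\Gamma_\infty^{pX}$, one checks directly that $sY = \big(g \cdot p(f) \cdot \Gamma_\infty^{pX}\big) \times pX$, which lies in $\cF$ because $pX \in \cJ_P$. Second, $s^{-1}Y = \{(f',x) : px \in X,\ (f^{-1}g)\cdot p(f') \in \Gamma_\infty^X\}$, and I would analyse the condition $(f^{-1}g)\cdot p(f') \in \Gamma_\infty^X$ one coordinate at a time, separating coordinates lying in $pP$ from those outside it. Outside $pP$ the value of $p(f')$ is $e$, so either $f^{-1}g$ is supported in $pP \cup X$ or there is no solution and $s^{-1}Y = \emptyset$; if it is, then $f'$ is unconstrained on $p^{-1}X = \{w \in P : pw \in X\}$ and forced off it by an explicit finitely supported $f_0 \in \Gamma_\infty^P$ (with $f_0(w) = (f^{-1}g)(pw)^{-1}$ for $w \notin p^{-1}X$ and $f_0(w) = e$ otherwise), so that $s^{-1}Y = (f_0 \cdot \Gamma_\infty^{p^{-1}X}) \times p^{-1}X \in \cF$. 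Together with $s\emptyset = s^{-1}\emptyset = \emptyset$, this shows $\cF$ is closed under both operations, hence $\cJ_{\Gamma \wr P} \subseteq \cF$.

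For the reverse inclusion I would first prove that $\Gamma_\infty^X \times X \in \cJ_{\Gamma \wr P}$ for every $X \in \cJ_P$, again by minimality: the set $\{X \in \cJ_P : \Gamma_\infty^X \times X \in \cJ_{\Gamma \wr P}\}$ contains $\emptyset$ and $P$, and the two formulas above, applied with $s = (e,p)$, specialise to $(e,p)(\Gamma_\infty^X \times X) = \Gamma_\infty^{pX} \times pX$ and $(e,p)^{-1}(\Gamma_\infty^X \times X) = \Gamma_\infty^{p^{-1}X} \times p^{-1}X$ (the consistency condition in the second case being automatic since $f^{-1}g = e$), so this set is closed under $X \mapsto pX$ and $X \mapsto p^{-1}X$ and hence equals $\cJ_P$. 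Finally, for arbitrary $f \in \Gamma_\infty^P$ I would apply the translation formula with $s = (f,e)$, which acts trivially on $P$ and by left multiplication by $f$ on $\Gamma_\infty^P$, giving $(f,e)(\Gamma_\infty^X \times X) = (f \cdot \Gamma_\infty^X) \times X$; hence this set is constructible. This gives $\cF \subseteq \cJ_{\Gamma \wr P}$ and finishes the proof.

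The step I expect to be the main obstacle is the explicit description of $s^{-1}Y$: one has to keep careful track of supports, identify precisely when the set collapses to $\emptyset$, and verify that the function $f_0$ really is finitely supported — for which one uses left cancellativity of $P$ (left Ore), so that $w \mapsto pw$ is injective and $f_0$ is supported on a subset of the preimage of the finite set $\supp(f^{-1}g)$. The translation formula and the two appeals to minimality are then routine verifications.
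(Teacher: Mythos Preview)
Your proof is correct and follows essentially the same route as the paper: show that the family $\cF$ contains $\emptyset$ and $\Gamma\wr P$ and is closed under $Y\mapsto sY$ and $Y\mapsto s^{-1}Y$, giving $\cJ_{\Gamma\wr P}\subseteq\cF$, and then check the reverse inclusion. Your treatment is in fact more complete than the paper's: the paper only writes out the first inclusion (declaring closure under left multiplication ``clear'' and sketching the $(h,p)^{-1}$ case via the existence of a suitable $\tilde f$), and leaves the reverse inclusion $\cF\subseteq\cJ_{\Gamma\wr P}$ to the reader, whereas you supply it explicitly via the minimality argument for $\cJ_P$ followed by translation by $(f,e)$. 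Your explicit identification of the emptiness condition and of $f_0$, together with the finite-support check using injectivity of $w\mapsto pw$, is exactly what underlies the paper's one-line claim.
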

\begin{proof} It is clear that
the right hand side contains $\emptyset$, $\Gamma \wr P$ and that it
is closed under left multiplication. Moreover, given $f \in
\Gamma_\infty^P$, $X \in \cJ_P$ and $(h,p) \in \Gamma \wr P$, either
$(h,p)^{-1}((f \cdot (\Gamma_\infty^X)) \times X)$ is empty or there
is $\ti{f} \in \Gamma_\infty^P$ with $hp(\ti{f}) \in f \cdot
(\Gamma_\infty^X)$. In the latter case, it is immediate that
$$
  (h,p)^{-1}((f \cdot (\Gamma_\infty^X)) \times X) = (\tilde{f} \cdot
  (\Gamma_\infty^{p^{-1}X})) \times p^{-1}X.
$$
\end{proof}

 \begin{scorollary} If $\cJ_P$ is independent, then $\cJ_{\Gamma \wr P}$
is independent. \end{scorollary}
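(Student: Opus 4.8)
The plan is to read off independence of $\cJ_{\Gamma \wr P}$ directly from the explicit description supplied by the preceding lemma, $\cJ_{\Gamma \wr P} = \{(f \cdot \Gamma_\infty^{X}) \times X : f \in \Gamma_\infty^{P},\ X \in \cJ_P\}$, together with the hypothesis that $\cJ_P$ is independent. So suppose we are given elements of $\cJ_{\Gamma \wr P}$ with $(f_0 \cdot \Gamma_\infty^{X_0}) \times X_0 = \bigcup_{i=1}^{k} \big((f_i \cdot \Gamma_\infty^{X_i}) \times X_i\big)$. If $X_0 = \emptyset$ then the left-hand side is empty and, since each set $f_i \cdot \Gamma_\infty^{X_i}$ contains $f_i$ and is thus non-empty, every $X_i$ must be empty, so the conclusion is immediate; hence we may assume $X_0 \neq \emptyset$, and after deleting the indices $i$ with $X_i = \emptyset$ we may assume all $X_i \neq \emptyset$.

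First I would project onto the $P$-coordinate. Each $(f_i \cdot \Gamma_\infty^{X_i}) \times X_i$ is contained in the union and hence in $(f_0 \cdot \Gamma_\infty^{X_0}) \times X_0$; since $f_i \cdot \Gamma_\infty^{X_i} \neq \emptyset$, projecting to $P$ gives $X_i \subseteq X_0$ for all $i$ and $\bigcup_{i=1}^{k} X_i = X_0$. Because $\cJ_P$ is independent, there is an index $i_0$ with $X_{i_0} = X_0$. It then remains to check that the $\Gamma_\infty^{P}$-component of the $i_0$-th set already agrees with that of the left-hand side. Since $e \in \Gamma_\infty^{X_0}$, the function $f_{i_0}$ lies in $f_{i_0} \cdot \Gamma_\infty^{X_0}$, so for any $x \in X_0$ the pair $(f_{i_0}, x)$ belongs to $(f_{i_0} \cdot \Gamma_\infty^{X_0}) \times X_{i_0} \subseteq (f_0 \cdot \Gamma_\infty^{X_0}) \times X_0$; hence $f_{i_0} \in f_0 \cdot \Gamma_\infty^{X_0}$, i.e. $f_0^{-1} f_{i_0} \in \Gamma_\infty^{X_0}$, and therefore $f_{i_0} \cdot \Gamma_\infty^{X_0} = f_0 \cdot \Gamma_\infty^{X_0}$. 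Consequently $(f_{i_0} \cdot \Gamma_\infty^{X_{i_0}}) \times X_{i_0} = (f_0 \cdot \Gamma_\infty^{X_0}) \times X_0$, which is exactly what independence of $\cJ_{\Gamma \wr P}$ requires.

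I do not expect a serious obstacle here; the one point that needs care is that a union of cosets of \emph{different} subgroups equal to a single coset need not contain that coset as one of its members (for instance $\ZZ = 2\ZZ \cup (2\ZZ+1)$), so one really must first exploit independence of $\cJ_P$ to force $X_{i_0} = X_0$ before the coset component in $\Gamma_\infty^{P}$ can be pinned down. Everything else is a routine unwinding of the definitions of $\Gamma \wr P$ and of the product $f \cdot \Gamma_\infty^{X}$.
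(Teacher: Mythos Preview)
Your proof is correct and follows essentially the same route as the paper: project onto the $P$-coordinate to obtain $X_0=\bigcup_i X_i$, invoke independence of $\cJ_P$ to find $X_{i_0}=X_0$, and then use that two cosets of the subgroup $\Gamma_\infty^{X_0}$ which meet must coincide. You are slightly more careful than the paper in treating the empty case and in spelling out why $f_{i_0}\cdot\Gamma_\infty^{X_0}=f_0\cdot\Gamma_\infty^{X_0}$, but the argument is the same.
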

\begin{proof} Assume that we have
$$
  (f \cdot (\Gamma_\infty^X)) \times X = \bigcup_{i=1}^n (f_i \cdot
  (\Gamma_\infty^{X_i})) \times X_i
$$
for some $f$, $f_1$, ..., $f_n$ in $\Gamma_\infty^P$ and $X$, $X_1$,
..., $X_n$ in $\cJ_P$. Projecting down onto the $P$-coordinate, we
see that $X = \bigcup_{i=1}^n X_i$. Hence by independence of
$\cJ_P$, we must have $X = X_i$ for some $i$. Therefore,
$\Gamma_\infty^X = \Gamma_\infty^{X_i}$. But because $f \cdot
(\Gamma_\infty^X)$ and $f_i \cdot (\Gamma_\infty^X)$ are either
equal or disjoint, we deduce that $(f \cdot (\Gamma_\infty^X))
\times X = (f_i \cdot (\Gamma_\infty^{X_i})) \times X_i$. \end{proof}\mn

We now turn to the right regular representation. In the particular
situation of the action of $P$ on $\Gamma_\infty^P$, we can say a
bit more about condition~\eqref{J-cond} in Proposition \ref{S4}.
Namely, take $f \in \Gamma_\infty^{\bar{P}}$ with support $\{x_1,
\dotsc, x_n\}$, i.e. $f(x) = e$ whenever $x \in \bar{P} \setminus
\{x_1, \dotsc, x_n\}$ and $f(x_i) \neq e$ for all $1 \leq i \leq n$.
Then
$$
  \{p \in P:p(f) \in \Gamma_\infty^P\} = P \cap \bigcap_{i=1}^n P x_i^{-1}.
$$
Therefore, the ideals which arise as sets of the form $\{p \in
P:p(f) \in \Gamma_\infty^P\}$ are precisely the constructible left
ideals of $P$ if $P \subseteq\bar{P}$ is assumed to be right
Toeplitz (see \cite[Lemma~4.2]{Li-nuc}). So by Lemma~\ref{principal}
and Proposition \ref{S4}, we deduce
 \begin{scorollary} If $P \subseteq \bar{P}$
is right Toeplitz and all the constructible left ideals of $P$ are
principal, then $\Gamma \wr P \,\subseteq\, \Gamma \wr \bar{P}$ is
right Toeplitz. \end{scorollary}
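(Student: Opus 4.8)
The plan is to deduce the assertion from Proposition~\ref{S4}, applied to $\Gamma\wr P=\Gamma_\infty^P\rtimes P$ with $H=\Gamma_\infty^P$ carrying the shift action of $P$ and $\bar H=\Gamma_\infty^{\bar P}$, so that the ambient group is $\bar H\rtimes\bar P=\Gamma\wr\bar P$. Proposition~\ref{S4} requires two things: that $P\subseteq\bar P$ be right Toeplitz, which is one of our hypotheses, and that condition~\eqref{J-cond} hold for the action of $P$ on $\Gamma_\infty^P$. So the entire task reduces to verifying \eqref{J-cond} in this situation.

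For \eqref{J-cond} I would invoke Lemma~\ref{principal}: it is enough to show that for every $f\in\bar H=\Gamma_\infty^{\bar P}$ the left ideal $\{p\in P:p(f)\in\Gamma_\infty^P\}$ of $P$ is principal. Unwinding the shift action on $\Gamma_\infty^{\bar P}$, if $f$ has finite support $\{x_1,\dots,x_n\}$ then $p(f)$ has support $\{px_1,\dots,px_n\}$, so this ideal equals $P\cap\bigcap_{i=1}^n Px_i^{-1}$. By the right Toeplitz hypothesis on $P\subseteq\bar P$ together with \cite[Lemma~4.2]{Li-nuc} (equivalently, Lemma~\ref{lem-strongtoeplitz} applied to $P^{\,op}\subseteq\bar P^{\,op}$), every such set is a constructible left ideal of $P$; here one should observe that the principality hypothesis already forces the family of constructible left ideals of $P$ to be independent (if $Pp=\bigcup_i Pp_i$, then reading the resulting relation $p=ba_{i_0}p$ inside the group $\bar P$ shows that $a_{i_0}$ is a unit of $P$ and hence $Pp_{i_0}=Pp$), so that Lemma~\ref{lem-strongtoeplitz} is indeed available. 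Since by hypothesis every constructible left ideal of $P$ is principal, $\{p\in P:p(f)\in\Gamma_\infty^P\}$ is principal, which is exactly what Lemma~\ref{principal} needs.

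Combining these, Lemma~\ref{principal} yields \eqref{J-cond}, and then Proposition~\ref{S4} gives that $\Gamma\wr P\subseteq\Gamma\wr\bar P$ is right Toeplitz. The only genuinely substantive step is the identification of $\{p\in P:p(f)\in\Gamma_\infty^P\}$ with a constructible left ideal of $P$: this is precisely where the right Toeplitz assumption enters, and one must take care to use the cited lemma of \cite{Li-nuc} in the direction ``every set of the form $P\cap\bigcap_i Px_i^{-1}$ is a constructible left ideal'' (the reverse inclusion is not needed here). Everything else---the description of the shift action, the passage to opposite semigroups, and the bookkeeping around \eqref{J-cond}---is routine.
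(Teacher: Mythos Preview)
Your proof is correct and follows essentially the same route as the paper: compute $\{p\in P:p(f)\in\Gamma_\infty^P\}=P\cap\bigcap_i Px_i^{-1}$, identify this as a constructible left ideal of $P$ via \cite[Lemma~4.2]{Li-nuc}, invoke the principality hypothesis, and then apply Lemma~\ref{principal} and Proposition~\ref{S4}. Your explicit verification that principality forces independence (needed to legitimately invoke Lemma~\ref{lem-strongtoeplitz}) is a point the paper glosses over, so your argument is in fact slightly more complete.
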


As a particular example, take $\Gamma = \Zz / 2 \Zz$ and $P = \Nz$. Then the enveloping group of $\Gamma \wr P$ is the classical lamplighter group $(\Zz / 2 \Zz) \wr \Zz$. To compute K-theory, we can simply apply Theorem~\ref{T1}, and we obtain
$$
  K_*(C^*_{\lambda}((\Zz / 2 \Zz) \wr \Nz)) \cong K_*(\bigotimes_{i=1}^{\infty} C^*(\Zz / 2 \Zz)) \cong K_*(C^*_{\rho}((\Zz / 2 \Zz) \wr \Nz)).
$$

\section{Appendix: A remark on equivariant $K$-theory for finite dimensional commutative C*-algebras}

Suppose that $C$ and $B$ are finite dimensional commutative C*-algebras, i.e., there exist positive integers $n$ and $m$ such that $C\cong \CC^n$ and $B\cong \CC^m$ and we may choose {bases}
of pairwise orthogonal projections $\{c_1,\ldots, c_n\}$ and $\{b_1,\ldots, b_m\}$ of {$C$} and $B$.

Recall that by the UCT-theorem we have
isomorphisms
$$KK(C,B)\cong \Hom(K_0(C), K_0(B))\cong M(m\times n, \ZZ)$$
where the first one is given by sending a class $x\in KK(C,B)$ to the associated
homomorphism $[\cdot]{\otimes}_Cx:K_0(C)\to K_0(B)$ and the second one is given by
describing this map with respect to the canonical generators
$\{[c_1],\ldots, [c_n]\}$ and $\{[b_1],\ldots, [b_m]\}$ of $K_0(C)$ and $K_0(B)$, respectively, i.e.,
the matrix $\Gamma=(\gamma_{ij})$ corresponding to $x$ is determined by
$$[c_j]{\otimes}_Cx=\sum_{i=1}^m\gamma_{ij}[b_i]$$
for all $1\leq j\leq n$.

Let us describe how we may construct for a given matrix $\Gamma\in M(m\times n,\ZZ)$ the corresponding class $x_\Gamma\in KK(C,B)$. For this we first decompose $\Gamma$ as the difference
$\Gamma=\Gamma^+-\Gamma^-$  where $\Gamma^+$ is the matrix built out of $\Gamma$ by replacing  all negative entries by $0$ and $\Gamma^-:=\Gamma^+-\Gamma$.
We then construct a graded Kasparov module $\mathcal E=\mathcal E^+\oplus \mathcal E^-$
with
$$\mathcal E^+=\bigoplus_{j=1}^n\left(\bigoplus_{i=1}^m\big(\CC^{\gamma_{ij}^+}\otimes B_i\big)\right)
\quad\text{and}\quad
\mathcal E^-=\bigoplus_{j=1}^n\left(\bigoplus_{i=1}^m\big(\CC^{\gamma_{ij}^-}\otimes B_i\big)\right)$$
equipped with the canonical $B$-valued inner products, where $B_i=\CC b_i\subseteq B$ denotes the ideal generated by $b_i$. For each $1\leq j\leq n$ let $p_j^+\in \K(\E^+)$ denote the orthogonal projection on the $j$th summand $\bigoplus_{i=1}^m\big(\CC^{\gamma_{ij}^+}\otimes B_i\big)$ of $\E^+$, and,
similarly, we let $p_j^-\in \K(\E^-)$ denote the orthogonal projection onto the  $j$th summand
$\bigoplus_{i=1}^m\big(\CC^{\gamma_{ij}^-}\otimes B_i\big)$ of $\E^-$.
We then define a homomorphism
$$\varphi^+:C\to \K(\mathcal E^+);\;\; \varphi^+\big(\sum_{j=1}^n \lambda_kc_j\big)=
\sum_{i=1}^n \lambda_jp_j^+$$
and, in a similar way,  we define the homomorphism $\varphi^-:C\to \K(\mathcal E^-)$. Then one easily checks that
$$x_\Gamma=[(\mathcal E^+\oplus \mathcal E^-, \varphi^+\oplus \varphi^-,0)]\in KK(C,B)$$
 is the  class corresponding to $\Gamma\in M(m\times n, \ZZ)$.

Suppose now that $G$ is a locally compact group which acts on $C$ and $B$ via permutations of the bases
$\{c_1,\ldots,c_n\}$ and $\{b_1,\ldots, b_m\}$, respectively. Let $\mu_C:G\to S_n$ and $\mu_B:G\to S_m$ denote the
corresponding homomorphisms into the permutation groups $S_n$ and $S_m$, respectively. We
shall often simply write $g \cdot j$ (resp. $ g\cdot i$) for $\mu_C(g)(j)$ (resp. $\mu_B(g)(i)$). We note that these actions will always factor through actions of some finite quotient $G/N$ of $G$, so that in the  following discussion one could assume as well that $G$ is finite.
\medskip

For $x\in KK^G(C,B)$, the corresponding element in $\Hom(K_0(C), K_0(B))$ is equivariant with respect to the  actions of $G$ on $K_0(C)$ and $K_0(B)$ induced by the given actions on $C$ and $B$, respectively. This implies that the corresponding matrix $\Gamma\in M(m\times n,\ZZ)$ satisfies the relation $\Gamma\circ \mu_C(g)=\mu_B(g)\circ \Gamma$ for all $g\in G$. This easily
translates to the condition $\gamma_{g\cdot i, g\cdot j}=\gamma_{ij}$ for each entry $\gamma_{ij}$ of $\Gamma$. It therefore follows that the same
relations  hold for $\Gamma^+$ and $\Gamma^-$ and we may define an action
$\mu_\E:G\to \Aut(\mathcal E^{+/-})$ by
$$\mu_\E(g)\left(\bigoplus_{j=1}^n\big(\bigoplus_{i=1}^m v_{ij}\otimes b_i\big)\right)
=\bigoplus_{j=1}^n\big(\bigoplus_{i=1}^m v_{g^{-1}\cdot i, g^{-1}\cdot j}\otimes b_i)\big).$$

Let us check that
$\varphi=(\varphi^+,\varphi^-): C\to \K(\mathcal E^+\oplus\mathcal E^-)$ is $G$-equivariant.
For this let $c_l$ be a fixed basis element of $C$. We want to compare $\varphi^+(\mu_C(g)( c_l))$
with $\mu_\E(g)\varphi^+(c_l)\mu_\E(g^{-1})$ and we do this by computing what both operators do to the
{$(i,j)$-th} summand $\CC^{\gamma_{ij}^+}\otimes B_i$ of $\E^+$. First of all, the projection
$\varphi^+(\mu_C(g)( c_l))=p_{{g \cdot l}}^+$ fixes the element $v_{ij}\otimes b_i$ if  $j= {g \cdot l}$ and sends it to $0$ if $j\neq {g \cdot l}$.
 In order to compute $\mu_\E(g)\varphi^+(c_l)\mu_\E(g^{-1})(v_{ij}\otimes b_i)$ we first
observe that $\mu_\E(g^{-1})$ moves $v_{ij}\otimes b_i$ to the element
$v_{ij}\otimes b_{{g^{-1} \cdot i}}$ at the {$(g^{-1}\cdot i, g^{-1}\cdot j)$-th} place. Then $\varphi^+(c_l)=p_l^+$ will
fix this element  if $l=g^{-1}\cdot j$ (i.e. $j= {g \cdot l}$) and will send it to $0$ else. Finally $\mu_\E(g)$ will move
$v_{ij}\otimes b_{{g^{-1} \cdot i}}$
to the element $v_{ij}\otimes b_i$ at the {$(i,j)$-th} place
if $j={g \cdot l}$. This shows the desired result. The same computation yields equivariance of $\varphi^-$.

\begin{remark}\label{rem-decomp}
We could have constructed the same element by any other decomposition $\Gamma=\tilde{\Gamma}^+-\tilde{\Gamma}^-$ as long as both matrices
$\tilde{\Gamma}^+,\tilde{\Gamma}^-$ only have positive integer
entries and satisfy the relations $\tilde\gamma_{{g \cdot i}, {g \cdot j}}^{+/-}=\tilde\gamma_{ij}^{+/-}$. In fact,  if we do the construction with the help of such an alternative decomposition to obtain
a class $\tilde{x}_\Gamma=[(\tilde\E^+\oplus \tilde\E^-, \tilde\varphi^+\oplus \tilde\varphi^-,0)]$,
then the difference
$\tilde{x}_\Gamma-x_{\Gamma}$ is represented by the Kasparov triple
$[(\F^+\oplus \F^-, \psi^+\oplus \psi^-, 0)]$
with
$$\F^+=\tilde\E^+\oplus\E^-, \;\F^-=\tilde\E^-\oplus \E^+\quad\text{and}
\quad
\psi^+=\tilde\varphi^+\oplus \varphi^-, \;\psi^-=\tilde\varphi^-\oplus\varphi^+.$$
Using the equation $\Gamma^++\tilde{\Gamma}^-=\Gamma^-+\tilde{\Gamma}^+$, one checks that
$$\F^+=\F^-=\bigoplus_{j=1}^n\left(\bigoplus_{i=1}^m\big(\CC^{\tilde{\gamma}_{ij}^++\gamma_{ij}^-}\otimes B_i\big)\right)\quad\text{and}\quad \psi^+=\psi^-,$$
which implies that the triple $(\F^+\oplus \F^-, \psi^+\oplus \psi^-, 0)$
is operator homotopic to the degenerate triple $\left(\F^+\oplus \F^-, \psi^+\oplus \psi^-, \left(\begin{smallmatrix}
0&1\\1&0\end{smallmatrix}\right)\right)$ via $t\mapsto t\left(\begin{smallmatrix}
0&1\\1&0\end{smallmatrix}\right)$.
\end{remark}

We shall need

\begin{lemma}\label{lem-compose}
Suppose that $C=\CC^n, B=\CC^m, A=\CC^k$ are equipped with  actions of the locally compact group
$G$  given by homomorphisms $\mu_C:G\to S_n, \mu_B:G\to S_m$, and  $\mu_A:G\to S_k$.
Let $\Gamma\in M(m\times n,\ZZ)$ and $\Lambda\in M(n\times k,\ZZ)$ such that
$$\Gamma\circ \mu_C(g)=\mu_B(g)\circ \Gamma\quad\text{and}\quad {\Lambda\circ \mu_B(g)=\mu_A(g)\circ \Lambda}$$
for all $g\in G$. Then $x_\Gamma^G\otimes_Bx_{\Lambda}^G=x_{\Lambda\cdot\Gamma}^G$ in $KK^G(C,A)$. In particular, if $m=n$ and $\Gamma\in \GL(n,\ZZ)$, then $x_\Gamma^G\in KK^G(C,B)$
is invertible with inverse given by the class $x_{\Gamma^{-1}}^G\in KK^G(B,C)$.
\end{lemma}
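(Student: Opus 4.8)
The plan is to compute the equivariant Kasparov product $x_\Gamma^G\otimes_B x_\Lambda^G$ directly on the concrete cycles constructed above, and then to recognise the answer as $x_{\Lambda\cdot\Gamma}^G$ by means of Remark~\ref{rem-decomp}.

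First I would observe that both $x_\Gamma^G$ and $x_\Lambda^G$ are represented by Kasparov cycles whose Fredholm operator is $0$, over Hilbert modules which are finitely generated projective (finite direct sums of the one-dimensional ideals $B_i$, resp.\ $A_l$); on such modules compact and adjointable operators coincide. For a pair of cycles of this shape the equivariant Kasparov product is represented by the internal graded tensor product equipped with the zero operator: $0$ is a $0$-connection for $0\in\mathcal L(\mathcal{E}_\Lambda)$, the Connes--Skandalis positivity requirement is vacuous because the operator of the first factor vanishes, and $\varphi_\Gamma\hat\otimes 1$ lies in $\K(\mathcal{E}_\Gamma\hat\otimes_B\mathcal{E}_\Lambda)$ because that module is again finitely generated. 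Since all of these data are $G$-equivariant for the diagonal action, we get
$$x_\Gamma^G\otimes_B x_\Lambda^G=\big[\big(\mathcal{E}_\Gamma\hat\otimes_B\mathcal{E}_\Lambda,\ \varphi_\Gamma\hat\otimes 1,\ 0\big)\big]\in KK^G(C,A).$$

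Next I would carry out the bookkeeping of the graded tensor product. Writing $\mathcal{E}_\Gamma^{\pm}=\bigoplus_{i,j}\CC^{\gamma_{ij}^{\pm}}\otimes B_i$ and $\mathcal{E}_\Lambda^{\pm}=\bigoplus_{l,i}\CC^{\lambda_{li}^{\pm}}\otimes A_l$, and using $B_i\otimes_B\mathcal{E}_\Lambda^{\pm}=\varphi_\Lambda^{\pm}(b_i)\mathcal{E}_\Lambda^{\pm}=\bigoplus_l\CC^{\lambda_{li}^{\pm}}\otimes A_l$, one obtains for all $\varepsilon,\delta\in\{+,-\}$ a $G$-equivariant isomorphism of Hilbert $A$-modules $\mathcal{E}_\Gamma^{\varepsilon}\hat\otimes_B\mathcal{E}_\Lambda^{\delta}\cong\bigoplus_{l,j}\CC^{(\Lambda^{\delta}\Gamma^{\varepsilon})_{lj}}\otimes A_l$ carrying $\varphi_\Gamma^{\varepsilon}(c_j)\hat\otimes 1$ to the projection onto the $j$-th block. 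Collecting the four pieces according to the grading, the even part of $\mathcal{E}_\Gamma\hat\otimes_B\mathcal{E}_\Lambda$ is the module attached to $P:=\Lambda^+\Gamma^++\Lambda^-\Gamma^-$ and the odd part is the module attached to $Q:=\Lambda^-\Gamma^++\Lambda^+\Gamma^-$. Both $P$ and $Q$ have non-negative integer entries and satisfy the relations $p_{g\cdot l,g\cdot j}=p_{lj}$, $q_{g\cdot l,g\cdot j}=q_{lj}$ inherited from those of $\Gamma^{\pm}$ and $\Lambda^{\pm}$, while $P-Q=(\Lambda^+-\Lambda^-)(\Gamma^+-\Gamma^-)=\Lambda\cdot\Gamma$. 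Hence the cycle above is exactly the one produced by the construction of this section for the (non-canonical) decomposition $\Lambda\cdot\Gamma=P-Q$, and Remark~\ref{rem-decomp} yields $x_\Gamma^G\otimes_B x_\Lambda^G=x_{\Lambda\cdot\Gamma}^G$.

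Finally, for the last assertion, when $m=n$ and $\Gamma\in\GL(n,\ZZ)$ the matrix $\Gamma^{-1}$ is again integral and satisfies $\Gamma^{-1}\circ\mu_B(g)=\mu_C(g)\circ\Gamma^{-1}$, so $x_{\Gamma^{-1}}^G\in KK^G(B,C)$ is defined; applying the product formula twice gives $x_\Gamma^G\otimes_B x_{\Gamma^{-1}}^G=x_{I_n}^G$ in $KK^G(C,C)$ and $x_{\Gamma^{-1}}^G\otimes_C x_\Gamma^G=x_{I_n}^G$ in $KK^G(B,B)$, and $x_{I_n}^G$ is the identity element, since its negative part is $0$ and its positive part is the identity $*$-homomorphism carrying the canonical $G$-action. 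The only genuinely delicate point in the whole argument is the identification of the Kasparov product with the internal tensor product bearing the zero operator in the equivariant setting; once the finite-generation observation is available, the rest is linear algebra and the appeal to Remark~\ref{rem-decomp}.
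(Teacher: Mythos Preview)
Your proof is correct and follows essentially the same route as the paper: both compute the Kasparov product as the graded internal tensor product with zero operator, identify the resulting summands with the matrices $\Lambda^{\pm}\Gamma^{\pm}$, and then invoke Remark~\ref{rem-decomp} on the decomposition $\Lambda\Gamma=(\Lambda^+\Gamma^++\Lambda^-\Gamma^-)-(\Lambda^-\Gamma^++\Lambda^+\Gamma^-)$. You are a bit more explicit than the paper in justifying why the product is represented by the tensor-product cycle with zero operator and in spelling out the invertibility clause, but the substance is identical.
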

\begin{proof} Let $(\E^+\oplus\E^-, \varphi^+\oplus \varphi^-,0)$ and $(\F^+\oplus\F^-,\psi^+\oplus \psi^-,0)$ denote the corresponding Kasparov triples as constructed above from $\Gamma$ and $\Lambda$. Then the product $x_\Gamma\otimes_Bx_\Lambda$ is represented by the triple
$(\G^+\oplus\G^-,\mu^+\oplus \mu^-, 0)$ with
$$\G^+=(\E^+\otimes_B\F^+)\oplus (\E^-\otimes_B\F^-)\quad \text{and}\quad
\G^-=(\E^+\otimes_B\F^-)\oplus(\E^-\otimes \F^+)$$ and with
$\mu^+=(\varphi^+\otimes 1_{\F^+})\oplus (\varphi^-\otimes
1_{\F^{-}})$ and $\mu^-=(\varphi^+\otimes 1_{\F^-})\oplus
(\varphi^-\otimes 1_{\F^+})$. Of course, these modules decompose
into summands of the form $\big(\CC^{\gamma_{ij}^{+/-}}\otimes
B_i\big)\otimes_B\big(\CC^{\lambda_{lr}^{+/-}}\otimes A_l\big)$,
where $A_l=\CC a_l$ for $a_l$ an element  in a given basis
$\{a_1,\ldots, a_k\}$ of pairwise orthogonal projections of $A$, and
we now compute these summands: Since $b_i^2=b_i$, the balanced
tensor product $\big(\CC^{\gamma_{ij}^{+/-}}\otimes
B_i\big)\otimes_B\big(\CC^{\lambda_{lr}^{+/-}}\otimes A_l\big)$ is
generated by elementary tensors $(v_{ij}\otimes b_i)\otimes
(w_{lr}\otimes a_r)$ modulo the relation
$$(v_{ij}\otimes b_i)\otimes (w_{lr}\otimes a_r)=(v_{ij}\otimes b_i)\otimes \psi^{+/-}(b_i)(w_{lr}\otimes a_l)$$
which forces the element to be zero if $r\neq i$, and which is always satisfied if $r=i$. Thus we see
that
$$\big(\CC^{\gamma_{ij}^{+/-}}\otimes B_i\big)\otimes_B\big(\CC^{\lambda_{lr}^{+/-}}\otimes A_l\big)
\cong \left\{\begin{matrix} \CC^{\gamma_{ij}^{+/-}\cdot \lambda_{li}^{+/-}}\otimes A_l&
\text{if $i=r$}\\ 0&\text{if $i\neq r$}\end{matrix}\right..$$
Moreover, the projections $\mu^{+/-}(c_t)$ will fix these spaces if and only if $t=j$ and will send them to $0$ otherwise. Summing up over $i$ and using Remark \ref{rem-decomp} then shows that  $[(\G^+\oplus\G^-,\mu^+\oplus \mu^-, 0)]$ equals $x_{\Lambda\cdot \Gamma}^G$.

\end{proof}

\end{document}